\crefname{thm}{theorem}{theorems}
\Crefname{thm}{Theorem}{Theorems}
\crefname{claim}{claim}{claims}
\Crefname{claim}{Claim}{Claims}
\definecolor{codedarkgreen}{RGB}{51, 133, 4}
\definecolor{codemaroon}{RGB}{133, 5, 63}
\definecolor{codeteal}{RGB}{0, 128, 96}
\lstdefinelanguage{Macaulay2}{
basicstyle=\small\ttfamily,
alsoletter=",
classoffset=1,
keywords={matrix,minors,gb,transpose,det,ideal,apply,subsets,ker,gens,fold,flatten,entries},
keywordstyle={\color{blue}},
classoffset=2,
morekeywords={from, to, list},
keywordstyle={\color{codemaroon}},
classoffset=3,
morekeywords={QQ},
keywordstyle={\color{codedarkgreen}},
classoffset=4,
morekeywords={MonomialOrder},
keywordstyle={\color{codeteal}},
xleftmargin=1.5cm,
xrightmargin=1em,
columns=fullflexible,
keepspaces=true,
stepnumber=1,
numbers=none,
captionpos=b,
showspaces=false,
frame=none
}
\DeclareMathOperator{\DL}{DL}
\DeclareMathOperator{\Det}{Det}
\DeclareMathOperator{\rank}{rank}
\numberwithin{equation}{section}
\theoremstyle{plain}
\newtheorem{Theorem}{Theorem}
\numberwithin{Theorem}{section}
\newtheorem{Corollary}[Theorem]{Corollary}
\newtheorem{Lemma}[Theorem]{Lemma}
\newtheorem{Proposition}[Theorem]{Proposition}
\newtheorem{Conjecture}[Theorem]{Conjecture}
\theoremstyle{definition}
\newtheorem{Definition}[Theorem]{Definition}
\newtheorem{Example}[Theorem]{Example}
\theoremstyle{remark}
\newtheorem{Remark}[Theorem]{Remark}
\newcommand{\CC}{\mathbb{C}}
\newcommand{\NN}{\mathbb{N}}
\newcommand{\R}{\mathbb{R}}
\newcommand{\bi}{\textbf{i}}
\newcommand{\bj}{\textbf{j}}
\newcommand{\sG}{\mathcal{G}}
\newcommand{\sI}{\mathcal{I}}
\newcommand{\sJ}{\mathcal{J}}
\newcommand{\sS}{\mathcal{S}}
\newcommand{\sH}{\mathcal{H}}
\newcommand{\sL}{\mathcal{L}}
\newcommand{\sK}{\mathcal{K}}
\newcommand{\W}{\mathrm{W}}
\newcommand{\SO}{\mathrm{SO}}
\newcommand{\T}{\mathcal{T}}
\newcommand\subsetsim{\mathrel{\ooalign{\raise0.2ex\hbox{$\subset$}\cr\hidewidth\raise-0.8ex\hbox{\scalebox{0.9}{$\sim$}}\hidewidth\cr}}}
\title{Decomposing tensors via rank-one approximations} 
\author{\'{A}lvaro Ribot}
\address{\'{A}lvaro Ribot, Harvard University}
\email{aribotbarrado@g.harvard.edu}
\author{Emil Horobet}
\address{Emil Horobet, Sapientia Hungarian University of Transylvania}
\email{horobetemil@ms.sapientia.ro}
\author{Anna Seigal}
\address{Anna Seigal, Harvard University}
\email{aseigal@seas.harvard.edu}
\author{Ettore Teixeira Turatti}
\address{Ettore Teixeira Turatti, UIT - The Arctic University of Norway}
\email{ettore.t.turatti@uit.no}
\keywords{Tensor decomposition, Two-orthogonal tensors, Singular vector tuples, Odeco}
\subjclass[2020]{14N07, 15A18, 15A69}
\begin{document}

\begin{abstract}
Matrices can be decomposed via rank-one approximations:
the best rank-one approximation is a singular vector pair, and the singular value decomposition writes a matrix as a sum of singular vector pairs. 
The singular vector tuples of a tensor are the critical points of its best rank-one approximation problem.
In this paper, we study tensors that can be decomposed via successive rank-one approximations: compute a singular vector tuple, subtract it off, compute a singular vector tuple of the new deflated tensor, and repeat. 
The number of terms in such a decomposition may exceed the tensor rank. 
Moreover, the decomposition may depend on the order in which terms are subtracted. We show that the decomposition is valid independent of order if and only if all singular vectors in the process are orthogonal in at least two factors. We study the variety of such tensors. We lower bound its dimension, showing that it is significantly larger than the variety of odeco tensors.
\end{abstract}

\maketitle

\section{Introduction}
For matrices, critical rank-one approximations combine to give the singular value decomposition: a matrix $M$ can be decomposed via successive critical rank-one approximations as~$M - \lambda_1 u_1 \otimes v_1 - \cdots - \lambda_r u_r \otimes v_r = 0$, where $r = \rank(M)$. Each term $\lambda_i u_i \otimes v_i$ is a critical rank-one approximation of both the original matrix $M$ and the deflated matrix~$M - \sum_{j < i} \lambda_j u_j \otimes v_j$. According to the Schmidt--Eckart--Young theorem \cite{eckart1936approximation}, truncating to the $k$ largest $\lambda_i$'s gives the best rank-$k$ approximation of $M$. Thus, the best rank-$k$ approximation problem reduces to iteratively solving best rank-one approximation problems. 

Low-rank approximation of tensors can also be computed via rank-one updates. Though this approach may not lead to the best low-rank approximation \cite{kolda2001orthogonal, stegeman2010subtracting}, it has proven successful in practice \cite{zhang2001rank, sorber2013optimization, anandkumar2014guaranteed}. Numerically, a rank-one approximation can be computed efficiently with the power method and its variants \cite{de1995higher, kolda2011shifted}, for which there are convergence guarantees \cite{uschmajew2015new}.

 For tensors of order 3 or higher, a best rank-$r$ approximation may not even exist for $r>1$ \cite{de2008tensor}, but a best rank-one approximation always exists because the set of rank-one tensors is closed. However, note that computing a best rank-one approximation is NP-hard \cite{Hillar13}. A global approach to computing a best rank-one approximation of a tensor is to compute the critical points of the distance function to the variety of rank-one tensors and to choose a critical point whose residual is smallest. A generic tensor has a finite number of such critical points \cite{FO14}. These are the singular vector tuples of the tensor~\cite{lim2005singular}. We use the terms critical rank-one approximation and singular vector tuple interchangeably. 

A minimal decomposition via rank-one approximations
is known to exist only on a measure zero set; see the Schmidt--Eckart--Young decomposition of~\cite{vannieuwenhoven2014generic}.
In this article, rather than producing a minimal decomposition via rank-one approximations, we are interested in tensors that are a finite sum of critical rank-one approximations. The decomposition need not be of minimal length. That is, we seek those tensors $\T$ for which
there exists a decomposition $\T = \sum_{i=1}^r x_i$ such that each $x_j$ is a critical rank-one approximation of~$\T - \sum_{i<j} x_i$.
These are the tensors for which
the following algorithm terminates for some rank-one approximations~$x_i$.

\begin{algorithm}[htbp]
	\caption{Decompose $\T$ via rank-one approximations}
    \label{alg:1}
	\begin{algorithmic}[1]
		\renewcommand{\algorithmicrequire}{\textbf{Input:}}
		\Require Tensor $\T \in \R^{n_1} \otimes \cdots \otimes \R^{n_d}$
		\State $i = 1$ 
  \State $\mathcal{S} = \T$ 
\While {$\mathcal{S} \neq 0$}
\State Compute a critical rank-one approximation $x_i = x_i^{(1)} \otimes \cdots \otimes x_i^{(d)}$ of $\mathcal{S}$
\State Deflate $\mathcal{S} \gets \mathcal{S} - x_i$ 
\State $i \gets i + 1$ 
\EndWhile
		 \renewcommand{\algorithmicrequire}{\textbf{Output:}}
		\Require Decomposition $\T = \sum_i  x_i$.
	\end{algorithmic}
\end{algorithm}

The varieties of tensors that admit such a decomposition are data loci in the sense of~\cite{horobect2022data}. We denote by $\DL_r$ the variety of tensors that admit such a decomposition of length $r$. 
We obtain a chain of varieties $\DL_1\subseteq \DL_2\subseteq\ldots$, such that any tensor $\T\in\DL_r$ has a rank-one approximation $x$ with $\T-x\in \DL_{r-1}$. 
For symmetric tensors, this chain of varieties stabilizes and the limit is the variety of weakly-odeco tensors~\cite{horobet2023does}.
For matrices, $\DL_r$ is the variety of matrices of rank at most $r$, so the chain also stabilizes, filling the ambient space.
In general, the rank-one tensor $x_i$ need not be a singular vector tuple of $\T$ for $i \geq 2$. That is, the decomposition is order-dependent. 

\begin{Example}\label{ex:order-dependent}
Let $\{e_0, e_1\}$ be an orthonormal basis of $\R^2$, and fix
\[
\T = \underbrace{e_1 \otimes (e_0 + e_1) \otimes (e_0 - e_1)}_{x_1} + \underbrace{e_0 \otimes e_1 \otimes e_1}_{x_2} + \underbrace{e_0 \otimes e_0 \otimes e_0}_{x_3} \in (\R^2)^{\otimes 3}.
\]
We observe that $\T \in \DL_3$, and that $x_1$ is a critical rank-one approximation of $\T$, but $x_2$ and $x_3$ are not critical rank-one approximations of~$\T$.
\end{Example}
We consider subvarieties of $\DL_r$ consisting of tensors for which the decomposition does not depend on the order. These are the tensors 
$\T = \sum_{i=1}^r x_i$
in which, for any order of the summands, $x_j$ is a critical rank-one approximation of~$\T - \sum_{i < j}x_i$. In particular, every~$x_i$ is a critical rank-one approximation of the original tensor $\T$. Put differently, for these tensors \Cref{alg:1} terminates with $\mathcal{S}$ replaced by $\T$ in Line~4. While a generic tensor lies in the span of its singular vector tuples~\cite{DOT}, one does not know the coefficient of each singular vector tuple; i.e. how much of each one to subtract off. For the tensors we consider, the coefficient of each singular vector tuple is its singular value, so the tensor is a sum of critical rank-one approximations.

One class of tensors known to possess such a property are the odeco (orthogonally decomposable) tensors, those that have a decomposition
$\T = \sum_{i = 1}^r x_i^{(1)} \otimes \cdots \otimes x_i^{(d)}$ such that for all~$i \neq j$
we have the orthogonality $x_{i}^{(k)} \perp x_{j}^{(k)}$ in all factors $1 \leq k \leq d$ \cite{robeva2016odeco, robeva2017singular}.  We relax this notion by requiring only that for all $i\neq j$, the $i$-th and $j$-th summands are orthogonal in at least two factors. We call such a decomposition \emph{two-orthogonal}. We define~$W_r$ to be the Zariski closure of the set of tensors that have a two-orthogonal decomposition of length at most $r$. The \emph{two-orthogonal variety} is $W = \overline{\bigcup_{r \geq 1} W_r}$. See \Cref{fig:two-orthogonal-plot}. 

\begin{figure}[ht]
    \centering
    \includegraphics[width=0.45\linewidth, height=0.35\linewidth]{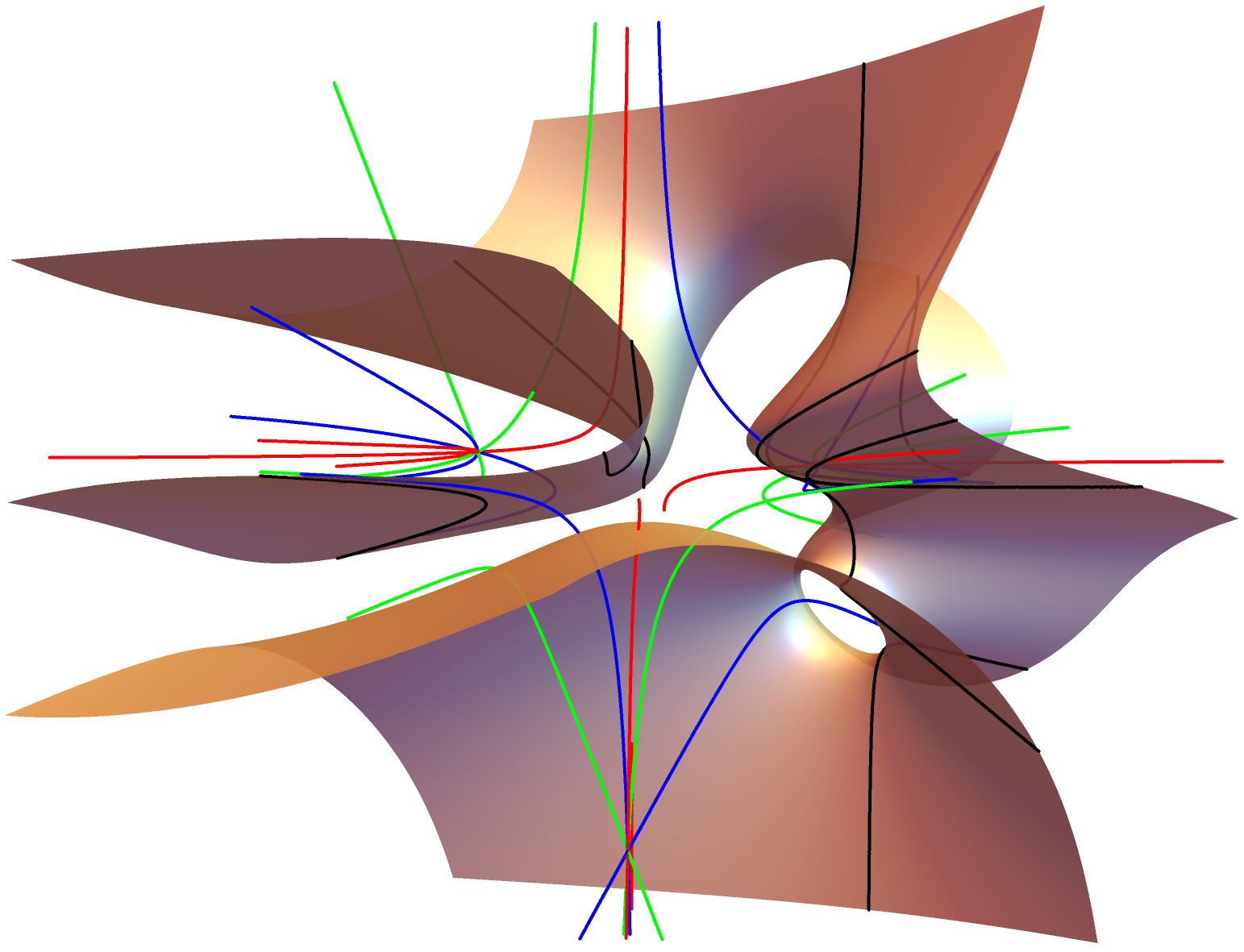}
    \hfill
    \includegraphics[width=0.45\linewidth, height=0.35\linewidth]{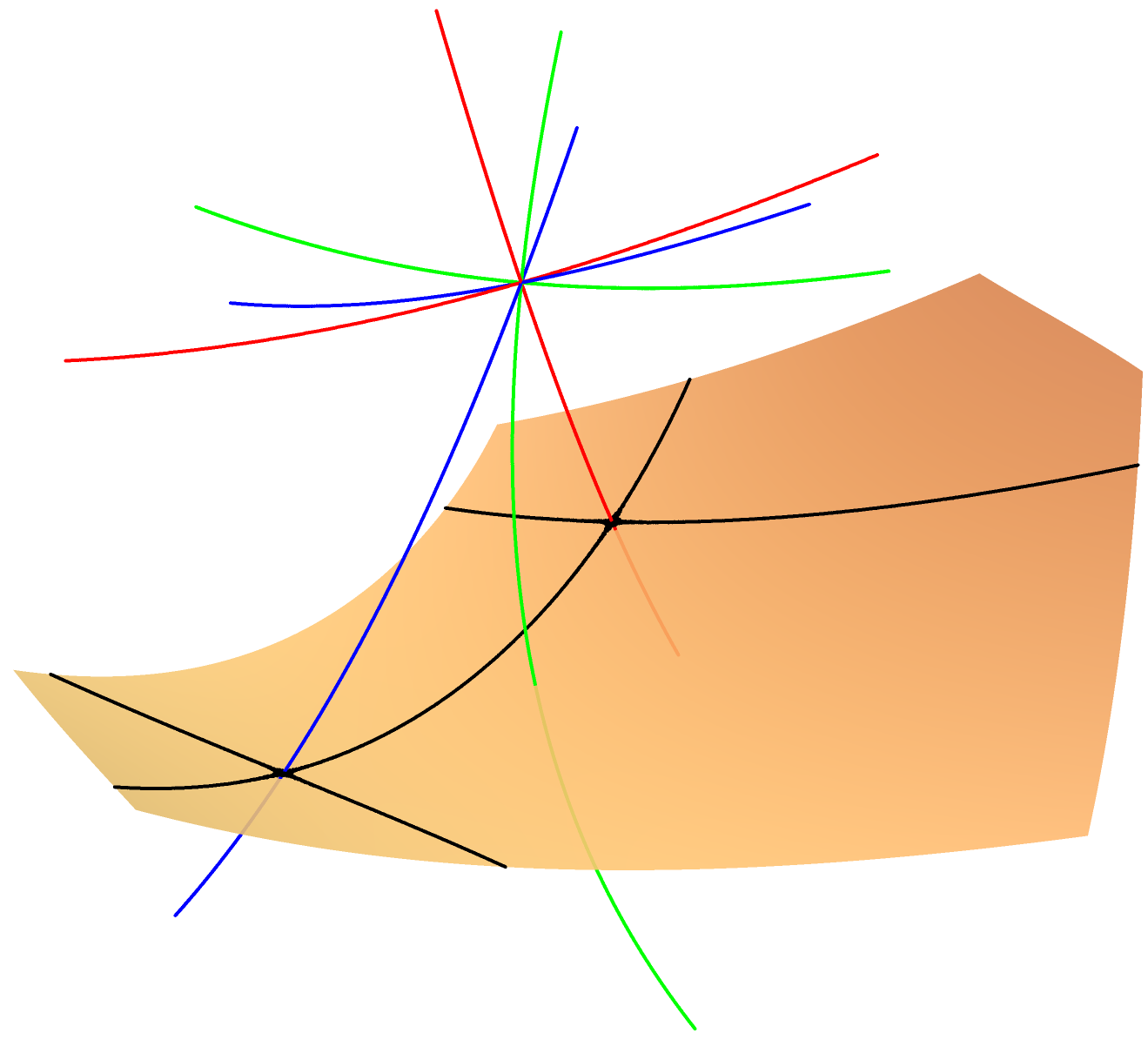}
    \caption{The affine slice of the two-orthogonal variety in $\R^2 \otimes \R^2 \otimes  \R^2$ when~$t_{010}=1, t_{100}=2t_{000}, t_{110}=1,t_{101}=2, t_{011}=3$. The image on the right is a zoomed-in and rescaled version of the image on the left. The surface is~$W_4 \setminus W_3$. The black curve is $W_3 \setminus W_2$. The red, green, and blue curves are the three components of $W_2$. The intersection of these three curves is the odeco variety. These plots were made with \texttt{Mathematica} \cite{Mathematica}.}
    \label{fig:two-orthogonal-plot}
\end{figure}

The notion of a two-orthogonal tensor also appears in~\cite[Definition 3.2]{vannieuwenhoven2014generic}, where it is called weakly two-orthogonal. We drop `weakly' to avoid confusion with the weakly orthogonally decomposable tensors, which appear in~\cite{horobet2023does}. Two-orthogonality is necessary for having a Schmidt--Eckart--Young decomposition~\cite[Theorem 3.3]{vannieuwenhoven2014generic}. The singular value decomposition implies that every matrix is two-orthogonal. For symmetric decompositions, two-orthogonality implies orthogonality in all factors, so it leads to an odeco tensor. We will see that a generic $2\times 2 \times 2$ tensor is not two-orthogonal in \Cref{sec:2x2x2}. We believe this is the general behavior for tensors of order at least three, as two-orthogonality imposes closed conditions on the singular vector tuples.

Now we state our main results. First, we show that the two-orthogonal tensors are the tensors that can be decomposed via rank-one approximations.

\begin{restatable}{Theorem}{nonorderdep}\label{thm:non_order_dep}
    The set of two-orthogonal tensors in $\R^{n_1} \otimes \cdots \otimes \R^{n_d}$ coincides with the set of tensors with a decomposition $\T = \sum_{i=1}^r x_i$ such that each $x_j$ is a critical rank-one approximation of $\T - \sum_{i \in \sI} x_i$ for all $\sI \subseteq \{1, \dots, r\} \setminus \{ j\}$.
\end{restatable}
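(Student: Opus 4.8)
The plan is to reduce the statement to one structural observation: for a fixed rank-one tensor $x = x_1 \otimes \cdots \otimes x_d$ with all factors nonzero, the condition ``$x$ is a critical rank-one approximation of $\T$'' is an affine-linear condition on $\T$, and $x$ itself always satisfies it. Recall that $x$ is a critical point of the distance from $\T$ to the variety of rank-one tensors (equivalently, a singular vector tuple of $\T$ together with the matching singular value) precisely when, for every $k$, the partial contraction $\T(x_1,\dots,x_{k-1},\,\cdot\,,x_{k+1},\dots,x_d) \in \R^{n_k}$ equals $\bigl(\prod_{\ell \neq k}\norm{x_\ell}^2\bigr)\,x_k$. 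Let $V_x \subseteq \R^{n_1}\otimes\cdots\otimes\R^{n_d}$ be the set of tensors $\T$ satisfying these $d$ equations. Each equation is linear in $\T$ with a constant right-hand side, so $V_x$ is an affine subspace; a one-line computation shows $x \in V_x$ (indeed $x$ is its own best rank-one approximation), so $V_x = x + V_x^0$, where $V_x^0 = \{\mathcal{R} : \mathcal{R}(x_1,\dots,\,\cdot\,,\dots,x_d) = 0 \text{ for all } k\}$ is a linear subspace. In particular, $x$ is a critical rank-one approximation of $\T$ if and only if it is a critical rank-one approximation of $\T - \mathcal{R}$ for every $\mathcal{R} \in V_x^0$.

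The computational core is the following lemma: for rank-one tensors $x, y$ with all factors nonzero, one has $y \in V_x^0$ (equivalently, $x$ is a critical rank-one approximation of $x + y$) if and only if $x_k \perp y_k$ for at least two indices $k$. To prove it, compute $y(x_1,\dots,\,\cdot\,,\dots,x_d) = \bigl(\prod_{\ell\neq k}\langle x_\ell, y_\ell\rangle\bigr)\,y_k$; since $y_k \neq 0$, this vanishes exactly when $\prod_{\ell\neq k}\langle x_\ell, y_\ell\rangle = 0$, i.e.\ when some factor other than the $k$-th is orthogonal. So $y \in V_x^0$ iff for every $k$ there is an $\ell \neq k$ with $x_\ell \perp y_\ell$, and a short counting argument shows this holds iff the set $\{\ell : x_\ell \perp y_\ell\}$ has at least two elements.

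With the lemma in hand, both inclusions are short. For the forward direction, let $\T = \sum_{i=1}^r x^{(i)}$ be two-orthogonal, fix $j$, and take any $\sI \subseteq \{1,\dots,r\}\setminus\{j\}$. By the lemma, $x^{(i)} \in V_{x^{(j)}}^0$ for every $i \neq j$, and since $V_{x^{(j)}}^0$ is a linear subspace, $\sum_{i \notin \sI,\ i \neq j} x^{(i)} \in V_{x^{(j)}}^0$; hence $\T - \sum_{i \in \sI} x^{(i)} = x^{(j)} + \sum_{i \notin \sI,\ i\neq j} x^{(i)} \in x^{(j)} + V_{x^{(j)}}^0 = V_{x^{(j)}}$, which is exactly the assertion that $x^{(j)}$ is a critical rank-one approximation of $\T - \sum_{i\in\sI} x^{(i)}$. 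For the converse, suppose $\T = \sum_{i=1}^r x^{(i)}$ has the stated property; we may delete the zero summands, which are orthogonal to everything in every factor. Fix $i \neq j$ and apply the hypothesis with $\sI = \{1,\dots,r\}\setminus\{i,j\} \subseteq \{1,\dots,r\}\setminus\{j\}$: it says that $x^{(j)}$ is a critical rank-one approximation of $\T - \sum_{l\in\sI} x^{(l)} = x^{(i)} + x^{(j)}$, so the lemma gives that $x^{(i)}$ and $x^{(j)}$ are orthogonal in at least two factors. Since $i, j$ were arbitrary, the decomposition is two-orthogonal.

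I do not anticipate a real obstacle: once the critical-point equations are written down, the proof is linear-algebraic bookkeeping driven by the affine-linearity of the critical-point condition. The points that require care are getting the scalar $\prod_{\ell\neq k}\norm{x_\ell}^2$ and the membership $x\in V_x$ exactly right, and keeping track of degenerate (zero) factors in the lemma. It is also worth recording, as a byproduct, that the converse only invokes the hypothesis for the two-element complements $\{1,\dots,r\}\setminus\{i,j\}$, so quantifying over all subsets $\sI$ in the theorem is in fact equivalent to quantifying over those alone.
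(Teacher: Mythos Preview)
Your proof is correct and follows essentially the same approach as the paper's. The paper phrases the key observation in terms of the tangent and normal spaces to the Segre variety (``$x$ is a critical rank-one approximation of $\T$ iff $\T - x \in N_xX$''), whereas you phrase it as affine-linearity of the critical-point equations (``$\T \in V_x = x + V_x^0$''); your linear subspace $V_x^0$ is exactly the normal space $N_xX$, and both proofs then reduce the converse to the two-summand case $\sI = \{1,\dots,r\}\setminus\{i,j\}$ and compute the same contraction to extract the two-orthogonality condition.
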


Two-orthogonal decompositions cannot have arbitrarily many summands. We count the maximum number of possible summands in the following.

\begin{restatable}{Theorem}{maximallength}   
\label{thm:maximal_length}
    The maximal length of a two-orthogonal decomposition in $\R^{n_1} \otimes \cdots \otimes \R^{n_d}$ is~$N=\min_{1 \leq k \leq d} \prod_{j \neq k} n_j$. In particular, the two-orthogonal variety is $W = W_N$.
\end{restatable}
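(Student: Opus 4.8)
The plan is to establish two inequalities — every two-orthogonal decomposition has length at most $N$, and some tensor admits one of length exactly $N$ — and then deduce $W = W_N$ formally.

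\textbf{Upper bound.} Start from a two-orthogonal decomposition $\T = \sum_{i=1}^r x^{(i)}$ with $x^{(i)} = x_1^{(i)}\otimes\cdots\otimes x_d^{(i)}$. Discarding any zero summands, we may assume every $x^{(i)}$, and hence every factor $x_j^{(i)}$, is nonzero. Fix a mode $k$ and set $y^{(i)} = \bigotimes_{j\neq k} x_j^{(i)}$, a nonzero vector in $V_k := \bigotimes_{j\neq k}\R^{n_j}$. The key observation is that two-orthogonality forces $y^{(i)} \perp y^{(j)}$ for all $i\neq j$: the summands $x^{(i)}$ and $x^{(j)}$ are orthogonal in at least two modes, so at least one such mode $\ell$ is different from $k$, and then $\langle y^{(i)}, y^{(j)}\rangle = \prod_{j\neq k}\langle x_j^{(i)}, x_j^{(j)}\rangle$ vanishes because its $\ell$-th factor does. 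Thus $y^{(1)},\dots,y^{(r)}$ are $r$ pairwise orthogonal nonzero vectors in $V_k$, so $r \le \dim V_k = \prod_{j\neq k} n_j$. Minimizing over $k$ gives $r \le N$.

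\textbf{Lower bound.} Fix a mode $k$ attaining $N = \min_k \prod_{j\neq k} n_j$; dividing the inequality $\prod_{j\neq k} n_j \le \prod_{j\neq i} n_j$ by the common factors shows $n_i \le n_k$ for all $i$, so $n_k = \max_{1\le j\le d} n_j$. Index a family of rank-one tensors by $p = (p_j)_{j\neq k} \in \prod_{j\neq k}\{0,\dots,n_j-1\}$: let $x^{(p)}$ be the standard basis tensor whose $j$-th factor is $e_{p_j}$ for $j\neq k$ and whose $k$-th factor is $e_{\sigma(p)}$, where $\sigma(p) = \big(\sum_{j\neq k} p_j\big)\bmod n_k$. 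These are $N$ distinct (indeed pairwise orthonormal) rank-one tensors, and I claim the family is pairwise two-orthogonal. Given $p\neq p'$, let $D = \{j\neq k : p_j\neq p_j'\}\neq\emptyset$. If $|D|\ge 2$, the modes in $D$ already contribute two orthogonalities; if $D = \{j_0\}$, then mode $j_0$ contributes one, and since $0 < |p_{j_0}-p_{j_0}'| \le n_{j_0}-1 < n_k$ we get $\sigma(p)\neq\sigma(p')$, so mode $k$ contributes a second. Hence $\T := \sum_p x^{(p)}$ admits a two-orthogonal decomposition of length $N$, and so the maximal length is exactly $N$.

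\textbf{Consequence $W = W_N$.} Since no two-orthogonal decomposition has length exceeding $N$, for every $r$ the set of tensors with a two-orthogonal decomposition of length at most $r$ is contained in the corresponding set for $r = N$; taking Zariski closures gives $W_r \subseteq W_N$ for all $r$, with equality for $r\ge N$. As $W_N$ is closed and $W = \overline{\bigcup_{r\ge 1} W_r}$, we conclude $W = W_N$. The only real content is the key observation in the upper bound — that orthogonality in two modes survives the deletion of any single mode — together with the $|D|=1$ check in the construction; I expect these to be short, the main obstacle being bookkeeping (the reduction to nonzero summands and the elementary dimension count identifying the minimizing mode) rather than anything deep.
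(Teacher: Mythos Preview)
Your proof is correct and essentially matches the paper's: the upper bound is exactly the paper's Lemma~2.6 (drop one mode, observe the surviving rank-one tensors are pairwise orthogonal, hence independent), and your sum-mod-$n_k$ construction is the paper's Latin-hypercube construction of Lemma~2.8 (specialized to $L(i_1,\dots,i_{d-1})=\sum_j i_j \bmod n$), with a slightly more direct handling of non-uniform dimensions than the paper's subarray trick. The deduction $W=W_N$ is likewise the same.
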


By constructing two-orthogonal decompositions of maximal length, we parameterize a family of two-orthogonal tensors with a dimension-preserving map. This gives a lower bound on the dimension of the two-orthogonal variety.

\begin{restatable}{Theorem}{dimensionlowerbound} \label{thm:dimension_lower_bound}
    The dimension of the two-orthogonal variety in $(\mathbb{R}^n)^{\otimes d}$ is at least
    \[
    n^{d-1} + d \binom{n}{2}.
    \]
\end{restatable}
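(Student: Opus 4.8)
The plan is to construct an explicit family of two-orthogonal tensors of maximal length $N=n^{d-1}$, parametrized by a polynomial map whose source has dimension exactly $n^{d-1}+d\binom{n}{2}$, and to show this map is generically finite onto its image. Since the image lies in $W$ and Zariski closure preserves dimension, this yields the stated lower bound on $\dim W$.

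Fix a \emph{Latin hypercube}: a map $L\colon\{1,\dots,n\}^{d-1}\to\{1,\dots,n\}$ whose restriction to every axis-parallel line is a bijection; for instance $L(\alpha)=\alpha_1+\cdots+\alpha_{d-1}\bmod n$. Writing $e_1,\dots,e_n$ for the standard basis of $\R^n$, define
\[
\Phi\colon \R^{n^{d-1}}\times\SO(n)^d\longrightarrow(\R^n)^{\otimes d},\qquad
\Phi(c,R_1,\dots,R_d)=\sum_{\alpha\in\{1,\dots,n\}^{d-1}} c_\alpha\,(R_1e_{\alpha_1})\otimes\cdots\otimes(R_{d-1}e_{\alpha_{d-1}})\otimes(R_de_{L(\alpha)}).
\]
First I would check that every tensor in the image is two-orthogonal. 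Given distinct multi-indices $\alpha,\beta$, either they differ in at least two of the first $d-1$ coordinates, in which case the corresponding summands are orthogonal in those two factors; or they differ in exactly one coordinate $k$, in which case the summands are orthogonal in factor $k$, and since $L$ is injective on the line through $\alpha$ and $\beta$ we get $L(\alpha)\neq L(\beta)$, so the summands are also orthogonal in factor $d$. Hence $\operatorname{im}\Phi\subseteq W_{n^{d-1}}\subseteq W$, so $\dim W\geq\dim\overline{\operatorname{im}\Phi}$. (One may also invoke \Cref{thm:maximal_length} to identify $W=W_N$ with $N=n^{d-1}$.)

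The crux is to show $\Phi$ has finite generic fibers, so that $\dim\overline{\operatorname{im}\Phi}=\dim\bigl(\R^{n^{d-1}}\times\SO(n)^d\bigr)=n^{d-1}+d\binom{n}{2}$. The key observation is that for each mode $k$ the $n\times n^{d-1}$ mode-$k$ flattening of $\T=\Phi(c,R_1,\dots,R_d)$ factors as $\T_{(k)}=R_kM_k$, where $M_k$ has mutually orthogonal rows: collecting the summands according to the value of $\alpha_k$ (or of $L(\alpha)$ when $k=d$) produces, by exactly the same use of the Latin property as above, $n$ pairwise orthogonal vectors $s_1^{(k)},\dots,s_n^{(k)}$ forming the rows of $M_k$. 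Consequently
\[
\T_{(k)}\T_{(k)}^{\top}=R_k\,\operatorname{diag}\bigl(\|s_1^{(k)}\|^2,\dots,\|s_n^{(k)}\|^2\bigr)\,R_k^{\top},
\]
and each $\|s_m^{(k)}\|^2$ is a sum of $c_\alpha^2$ over a nonempty index set, these index sets being pairwise disjoint for fixed $k$. Hence for $c$ outside a proper subvariety all the numbers $\|s_m^{(k)}\|^2$ are distinct and positive, so the displayed identity is the spectral decomposition of a symmetric matrix with distinct eigenvalues; uniqueness then pins down $R_k$ up to a signed permutation matrix, i.e. up to finitely many possibilities. Finally, the tensors $(R_1e_{\alpha_1})\otimes\cdots\otimes(R_de_{L(\alpha)})$ form an orthonormal set, so once $R_1,\dots,R_d$ are known each coefficient is recovered as $c_\alpha=\langle\T,(R_1e_{\alpha_1})\otimes\cdots\otimes(R_de_{L(\alpha)})\rangle$. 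Thus every fiber of $\Phi$ over a generic point of its image is contained in a finite set, which gives the dimension count.

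I expect the fiber-finiteness step to be the only real obstacle; the rest is bookkeeping. Two points need care there: that the genericity conditions on $c$ — distinctness of the eigenvalues $\|s_m^{(k)}\|^2$ for all $d$ flattenings simultaneously — cut out only a proper subvariety, which holds because the relevant index sets are nonempty with pairwise disjoint supports; and the routine fact that $\SO(n)$ is an irreducible real algebraic variety of dimension $\binom{n}{2}$, so that generic finiteness of $\Phi$ indeed forces $\dim\overline{\operatorname{im}\Phi}$ to equal the dimension of the source. One could instead compute the rank of the Jacobian of $\Phi$ at the point $(c,I,\dots,I)$ for generic $c$, but keeping track of the orthogonality constraints makes the flattening argument more transparent.
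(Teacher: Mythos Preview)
Your argument is correct, and its overall architecture matches the paper's: both parametrize the basis-aligned tensors coming from a Latin hypercube via $\R^{n^{d-1}}\times\SO(n)^d$ and verify that the parametrization has the expected dimension. The difference lies in how that last step is carried out. The paper computes the Jacobian of the parametrization at the identity rotations, and to show the $d\binom{n}{2}$ block has full rank it invokes a combinatorial lemma producing $n-1$ indices in the Latin hypercube at pairwise Hamming distance at least~$3$; this lets one zero out all but $n-1$ of the $\lambda$'s and read off full rank directly. Your route instead establishes generic finiteness of the fibers by observing that each mode-$k$ flattening satisfies $\T_{(k)}\T_{(k)}^{\top}=R_k D_k R_k^{\top}$ with $D_k$ diagonal, so for generic $c$ the spectral theorem recovers $R_k$ up to a signed permutation. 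This is cleaner: it bypasses the distance-$3$ lemma entirely, works uniformly for any Latin hypercube, and as a bonus gives an explicit reconstruction procedure for the decomposition from the flattenings. The paper's Jacobian computation, on the other hand, is more self-contained in that it avoids invoking the fiber-dimension theorem over~$\R$, which you correctly flag as needing a word of justification (semi-algebraic Hardt triviality, or complexification, suffices).
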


In $V = (\R^n)^{\otimes d}$, the odeco variety has dimension $n + d \binom{n}{2}$. Hence, the dimension of the two-orthogonal variety exceeds that of the odeco variety by at least $n(n^{d-2}-1)$.
Finally, we show that two-orthogonal decompositions are generically identifiable for $2\times 2 \times 2$ tensors. We suspect that this is also true for generic two-orthogonal tensors in $\R^{n_1} \otimes \cdots \otimes \R^{n_d}$.

\begin{restatable}{Theorem}{uniqueness} \label{thm:uniqueness}
    A generic tensor in $W \subseteq (\R^{2})^{\otimes 3}$ has a unique two-orthogonal decomposition.
\end{restatable}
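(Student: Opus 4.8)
To prove \Cref{thm:uniqueness}, the plan is to reduce an arbitrary two-orthogonal decomposition of such a tensor to a rigid combinatorial normal form, and then to recover the four rank-one terms from $\T$ by linear algebra. First I would fix the shape of the decomposition. By \Cref{thm:maximal_length} we have $W=W_4$; comparing the lower bound of \Cref{thm:dimension_lower_bound} (which is $7$ here) with the $7$-parameter family of length-$4$ decompositions shows $\dim W=7$, while the analogous normal form for length-$3$ decompositions involves only $6$ parameters, so $\dim W_3\le 6$. Hence a generic tensor in $W$ lies in $W_4\setminus W_3$ and has a two-orthogonal decomposition of length \emph{exactly} $4$, each of whose terms is, by \Cref{thm:non_order_dep}, a singular vector tuple of $\T$. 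Now in $\R^2$ the orthogonal complement of a line is a line, so within each factor the perpendicularity relation among the four term-directions is a matching; a count shows a single factor contributes at most four orthogonal pairs (a complete bipartite $K_{2,2}$), and the requirement that each of the six pairs of terms be orthogonal in at least two factors forces each factor to realize a $2$-colouring of $\{1,2,3,4\}$ into classes of size $2$, the three colourings separating every pair exactly twice. Equivalently the four colour triples in $\F_2^3$ are pairwise at Hamming distance $2$; up to permuting the factors and swapping colours within a factor this is the even-weight code, so every two-orthogonal decomposition of $\T$ has the form
\[
\T=\lambda_1\,p_1\otimes p_2\otimes p_3+\lambda_2\,p_1\otimes q_2\otimes q_3+\lambda_3\,q_1\otimes p_2\otimes q_3+\lambda_4\,q_1\otimes q_2\otimes p_3,
\]
with $p_k\perp q_k$ unit vectors in the $k$-th factor.

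The crux is to recover the unordered pair $\{[p_1],[q_1]\}\subseteq\PP^1$ intrinsically from $\T$. Contracting the first factor against a vector $u$ gives a $2\times 2$ matrix $X_u=\T(u,\cdot,\cdot)$, and $\T=p_1\otimes A+q_1\otimes B$ with $A=X_{p_1}=\lambda_1\,p_2\otimes p_3+\lambda_2\,q_2\otimes q_3$ and $B=X_{q_1}=\lambda_3\,p_2\otimes q_3+\lambda_4\,q_2\otimes p_3$; since $p_3\perp q_3$, the matrices $AA^{\top}$ and $BB^{\top}$ both have eigenvectors $\{p_2,q_2\}$. Let $u^{\perp}$ be the quarter-turn of $u$ (so $u^{\perp}$ depends linearly on $u$) and let $C(u)=[\,X_uX_u^{\top},\,X_{u^{\perp}}X_{u^{\perp}}^{\top}\,]_{12}$, the obstruction to $X_uX_u^{\top}$ and $X_{u^{\perp}}X_{u^{\perp}}^{\top}$ commuting; this is a binary quartic in $u$ whose coefficients are polynomials in the entries of $\T$. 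Writing $u=\alpha p_1+\beta q_1$, a direct computation in the normal form above gives
\[
C(u)=-\,\alpha\beta\,(\alpha^2+\beta^2)\,(\lambda_1\lambda_4+\lambda_2\lambda_3)\,(\lambda_1^2-\lambda_2^2+\lambda_3^2-\lambda_4^2),
\]
so for generic parameters the only real zeros of $C$ are $\{[p_1],[q_1]\}$. Since the same identity applies verbatim to any two-orthogonal decomposition of $\T$, the first-factor directions of \emph{every} such decomposition are real zeros of the fixed quartic $C$, hence equal to this pair; then $A=\T(p_1,\cdot,\cdot)$ and $B=\T(q_1,\cdot,\cdot)$ are determined, and, as generically $|\lambda_1|\neq|\lambda_2|$ and $|\lambda_3|\neq|\lambda_4|$, their singular value decompositions are unique and return the four rank-one summands. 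So the decomposition is unique.

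The main obstacle is the factorization of $C(u)$ above together with the bookkeeping of the genericity hypotheses used along the way: $\lambda_i\neq 0$; $\lambda_1\lambda_4+\lambda_2\lambda_3\neq 0$ and $\lambda_1^2-\lambda_2^2+\lambda_3^2-\lambda_4^2\neq 0$, so that $C\not\equiv 0$ with exactly two simple real roots; $|\lambda_1|\neq|\lambda_2|$ and $|\lambda_3|\neq|\lambda_4|$, so that the relevant singular values are distinct. Each of these cuts out a nonempty Zariski-open subset of the parameter space and therefore holds on a dense open subset of $W$. Should the direct computation of $C(u)$ prove unwieldy, an alternative is to verify that the generic fibre of the parametrization is zero-dimensional and to determine its cardinality by a Gröbner-basis or monodromy computation, matching it against the combinatorial symmetries of the even-weight normal form; but the route above has the advantage of being constructive, exhibiting the unique two-orthogonal decomposition as the output of one univariate root-finding step followed by two $2\times 2$ singular value decompositions.
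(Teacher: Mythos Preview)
Your approach is correct for the top-dimensional component $\overline{W_4\setminus W_3}$ and is genuinely different from the paper's. The paper computes all six singular vector tuples of a generic tensor in normal form explicitly, obtains closed-form expressions for the two ``extra'' ones in terms of the homogenized elliptope polynomial $g$, and verifies that these two are not orthogonal (in two factors) to anything else; uniqueness then follows because every summand of a two-orthogonal decomposition is a singular vector tuple. Your route instead builds the intrinsic quartic $C(u)=[X_uX_u^\top,X_{u^\perp}X_{u^\perp}^\top]_{12}$, factors it in the normal form, and reads off the first-factor frame as its real roots; the remaining summands then drop out of two $2\times 2$ SVDs. Your method is constructive (it literally outputs the decomposition) and plausibly scales better, since it avoids solving the full singular-vector-tuple system; the paper's method ties the result more tightly to the singular-vector-tuple viewpoint developed throughout. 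Two small caveats: the perpendicularity relation in a single $\R^2$ factor is a complete bipartite graph on the collinearity classes, not a matching (you use this correctly a line later); and the commutator is only well-defined up to a nonzero scalar depending on the basis, so you should phrase $C(u)$ as the unique off-diagonal entry of a skew-symmetric $2\times 2$ matrix, whose vanishing locus is basis-independent.

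There is one genuine gap relative to the paper's statement. In $(\R^2)^{\otimes 3}$ the variety $W$ is reducible: besides the $7$-dimensional hypersurface $\overline{W_4\setminus W_3}$ it has three $6$-dimensional components $W_2^{\{i,j\}}$ that are \emph{not} contained in the hypersurface. The paper interprets ``generic in $W$'' componentwise and proves uniqueness on each of the four components separately, including an explicit singular-vector-tuple computation for a generic tensor $e_0\otimes e_0\otimes e_0+\lambda\,e_1\otimes e_1\otimes y\in W_2^{\{1,2\}}$. Your dimension count only places the generic point in $W_4\setminus W_3$, so your argument as written leaves the three $W_2$ components untreated. This is easy to repair with your own tools: for $\T\in W_2^{\{1,2\}}$ in the normal form above, the analogous commutator (or simply the eigenvectors of $\T\T^\top$ flattened along the third factor) recovers the frame, and a generic length-$2$ decomposition cannot be rewritten with four terms because $\T\in W_2\setminus\overline{W_4\setminus W_3}$ generically. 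You should add this short case analysis to match the paper's statement.
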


The article is organized as follows. In \Cref{sec:2o-decompositions} we study the main properties of two-orthogonal decompositions. We prove \Cref{thm:non_order_dep} and \Cref{thm:maximal_length}, we introduce the notion of two-orthogonal rank and how it relates to the usual rank, and we extend some of our results to partially symmetric tensors. We also show that truncating two-orthogonal decompositions may lead to best low-rank approximations and answer an open question posed in \cite{vannieuwenhoven2014generic}. In \Cref{sec:two-orthogonal-variety} we prove \Cref{thm:dimension_lower_bound}. In \Cref{sec:combinatorial_descriptions} we provide a parametric description of the two-orthogonal variety using graphs, with a focus on binary tensors. There is an interplay between algebraic geometry and combinatorics, with open directions for future work. Finally, in \Cref{sec:2x2x2} we provide an algebraic description of the two-orthogonal variety in $(\R^2)^{\otimes 3}$ and prove \Cref{thm:uniqueness}.

\section{Two-orthogonal decompositions} \label{sec:2o-decompositions}
Let $V = \R^{n_1} \otimes \cdots \otimes \R^{n_d}$ with $d\geq 2$ and each $n_k \geq 2$. Let $X = \{ v_1 \otimes \cdots \otimes v_d \mid v_k \in \R^{n_k}\} \subset V$ denote the cone over the Segre variety, the set of rank-one tensors. Let $\langle\cdot,\cdot \rangle_k$ be nondegenerate bilinear forms in~$\R^{n_k}$ for $k=1, \dots, d$. They induce a nondegenerate bilinear form in V, called the \emph{Bombieri-Weyl inner product}, defined on two rank-one tensors as
\[
\langle v_1\otimes\cdots\otimes v_d,w_1\otimes\cdots\otimes w_d \rangle=\prod_{k=1}^d\langle v_k,w_k \rangle_k
\]
and extended to all of $V$ by bilinearity. We write $v_k \!\perp\! w_k$ to denote orthogonality: $\langle v_k ,\! w_k\rangle_k \!=\! 0$.
Given a positive integer $n$, let $[n] = \{1, \dots, n \}$, and let $\{e_1, \dots, e_n\}$ be an orthonormal basis of $\R^n$. We use the same letters for the bases of different factors of $V$. That is, the set~$\{ e_{i_1} \otimes \cdots \otimes e_{i_d} \mid i_k \in [n_k]\}$ is an orthonormal basis of~$V$. Let $\| \cdot \|$ denote the Frobenius norm on $V$ induced by the inner product.
\begin{Definition}
\label{def:wo2}
A \emph{two-orthogonal decomposition} of $\T$ is an expression $\T = \sum_{i = 1}^r x_i$, where $x_i = x_i^{(1)} \otimes \cdots \otimes x_i^{(d)} \neq 0$ and all pairs of summands $x_i$ and $x_j$ with $i \neq j$ are orthogonal in at least two factors: there exist indices $k_1\neq k_2$ such that $x_{i}^{(k_1)} \perp x_{j}^{(k_1)}$ and $x_{i}^{(k_2)} \perp x_{j}^{(k_2)}$. A tensor $\T$ is \emph{two-orthogonal} if it has a two-orthogonal decomposition for some $r$.
\end{Definition}

\begin{Remark}\label{remark:pythagoras}
    The summands in a two-orthogonal decomposition are orthogonal to each other. Hence, a two-orthogonal decomposition $\T = \sum_{i=1}^r x_i$ has $\|\T\|^2=\sum_{i=1}^r \|x_i\|^2$. 
\end{Remark}

\begin{Definition}\label{def:SVT}
Given a tensor $\T\in V$, a
rank-one tensor $x=x^{(1)} \otimes \cdots \otimes x^{(d)}$ is a \emph{singular vector tuple} of $\T$ if for all $k \in [d]$ there exists $\lambda_k \in \R$ such that
\[
\T(x^{(1)}, \dots, x^{(k-1)}, \cdot, x^{(k+1)}, \dots, x^{(d)}) = \lambda_k x^{(k)}
\]
where $\T$ is viewed as a multilinear map and $\R^{n_k}$ is identified with its dual $(\R^{n_k})^\ast$ via $\langle\cdot,\cdot\rangle_k$.
If $\langle x^{(k)},x^{(k)}\rangle_k = 1$ for all $k$, then $\lambda_k = \lambda$ for all $k$, and $\lambda$ is called the \emph{singular value} of $x$.
\end{Definition}

\begin{Remark}
 We consider singular vector tuples defined over $\R$, though they are usually defined over $\CC$. We focus on $\R$ because approximating tensors by rank-one tensors is done primarily on the real numbers in applications. To define them over $\CC$, one considers the bilinear form $\langle \cdot,\cdot\rangle_k$ as an extension of a real inner product to a complex product instead of opting for a Hermitian inner product. However, such extensions introduce isotropic vectors: $v\in\CC^{n_k} \setminus \{0\}$ with~$\langle v,v\rangle_k=0$. While a general tensor has no isotropic singular vector tuples~\cite[Proposition 2.6]{DOT}, our tensors are special. In fact, self-orthogonality plays a role in the study of symmetric odeco tensors~\cite{horobet2023does}. We leave the study of two-orthogonal tensors in $\CC^{n_1} \otimes \cdots \otimes \CC^{n_d}$ for future work.
\end{Remark}

We view singular vector tuples as points in the corresponding Segre product \cite{FO14}. Hence, the normalization $\langle x^{(k)}, x^{(k)} \rangle_k = 1$ is without loss of generality, although it is necessary for the singular value to be well defined. Let $\T = \sum_{i=1}^r x_i$ be a two-orthogonal decomposition. Then, each $x_i$ is a singular vector tuple for $\T$ and, after normalizing each factor, the corresponding singular value is $\|x_i\|$. We recall the following facts.

\begin{Proposition} \label{prop:tangent-space}
    Fix $\T \in V$ and $x = x^{(1)} \otimes \cdots \otimes x^{(d)} \in X \setminus \{ 0\} \subset V$.
    \begin{enumerate}
        \item The tangent space to $X$ at $x$ is $T_xX=\sum_{k=1}^d x ^{(1)}\otimes\cdots\otimes x^{(k-1)} \otimes \R^{n_k}\otimes x^{(k+1)} \otimes \cdots \otimes x^{(d)}$.
        \item Suppose that $\langle x^{(k)}, x^{(k)} \rangle_k =1$ for all $k \in [d]$, then $x$ is a singular vector tuple of $\T$ with singular value $\lambda$ if and only if $T-\lambda x \in N_xX = (T_xX)^\perp$, the normal space to $X$ at $x$.
        \item The singular vector tuples correspond to the critical points of the distance function $d_\T(y):=\langle \T - y, \T-y\rangle$, on $y \in X$. A singular vector tuple $x$ with nonzero singular value~$\lambda$ corresponds to the critical rank-one approximation $\lambda x$, and vice versa.
    \end{enumerate}
\end{Proposition}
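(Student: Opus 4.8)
All three statements are standard facts, recalled here from \cite{lim2005singular, FO14, DOT}, so the plan is to assemble a short self-contained argument rather than to overcome any genuine difficulty.

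For part (1), I would argue by differentiating curves in $X$. Every smooth curve in $X$ through $x$ has the form $t \mapsto x_1(t) \otimes \cdots \otimes x_d(t)$ with $x_k(0) = x_k$, and the Leibniz rule gives the tangent vector $\sum_{k=1}^d x_1 \otimes \cdots \otimes \dot{x}_k(0) \otimes \cdots \otimes x_d$, which lies in the claimed subspace. Conversely, taking $x_k(t) = x_k + t w_k$ with the remaining factors constant realizes an arbitrary $x_1 \otimes \cdots \otimes w_k \otimes \cdots \otimes x_d$ as a tangent vector, so the subspace equals $T_x X$. I would note that $x \neq 0$ ensures $X$ is smooth at $x$, so the dimension is as expected.

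For part (2), the plan is to test orthogonality of $\T - \lambda x$ against the generators $v = x_1 \otimes \cdots \otimes w_k \otimes \cdots \otimes x_d$ of $T_x X$ from part (1). Unfolding the Bombieri--Weyl pairing and using $\langle x_j, x_j\rangle_j = 1$ for $j \neq k$,
\[
\langle \T - \lambda x, v\rangle = \big\langle \T(x_1, \dots, x_{k-1}, \cdot, x_{k+1}, \dots, x_d) - \lambda x_k, \, w_k\big\rangle_k,
\]
so nondegeneracy of $\langle\cdot,\cdot\rangle_k$ makes this vanish for all $w_k$ if and only if $\T(x_1, \dots, \cdot, \dots, x_d) = \lambda x_k$. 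Requiring this for every $k$ is exactly the singular vector tuple condition of \Cref{def:SVT}; contracting the $k$-th equation with $x_k$ shows that $\lambda = \T(x_1, \dots, x_d)$ is the common value of all $\lambda_k$, i.e. the singular value. This gives the equivalence: $\T - \lambda x \in N_x X$ if and only if $x$ is a singular vector tuple with singular value $\lambda$.

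For part (3), I would use that at a smooth point $y \in X \setminus \{0\}$, criticality of $d_\T$ means the gradient $-2(\T - y)$ is orthogonal to $T_y X$, i.e. $\T - y \in N_y X$. Writing a nonzero rank-one tensor as $y = \lambda x$ with each factor of $x$ of unit norm and $\lambda = \|y\| > 0$, and noting $T_{\lambda x} X = T_x X$, part (2) identifies the nonzero critical points of $d_\T$ with the singular vector tuples $x$ of nonzero singular value $\lambda$, via $x \mapsto \lambda x$; the converse direction is the same equivalence read backwards. The only points needing care are bookkeeping ones: matching the normalization convention of \Cref{def:SVT} with the scaling of the genuine critical point $\lambda x$, and excluding the apex $0$ of the cone $X$ (a singular point of $X$, where singular vector tuples of zero singular value degenerate to the trivial approximation $0$). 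There is no substantive obstacle.
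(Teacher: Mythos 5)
Your proposal is correct and follows essentially the same route as the paper: Leibniz rule for part (1), testing $\T - \lambda x$ against the generators $x_1 \otimes \cdots \otimes w_k \otimes \cdots \otimes x_d$ of $T_x X$ for part (2), and reducing part (3) to part (2) via the gradient/normal-space criterion for critical points of the distance function. You supply a bit more detail than the paper (which for part (3) simply cites \cite{lim2005singular}), notably the rescaling bookkeeping $y = \lambda x$ and the observation $T_{\lambda x}X = T_x X$, but there is no difference in method.
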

\begin{proof}
Part (1) follows by applying the Leibniz rule to the parametrization of $X$. For part~(2), the equations in \Cref{def:SVT} are equivalent to
\[
\langle \T,x^{(1)}\otimes\dots\otimes x^{(k-1)}\otimes v^{(k)}\otimes x^{(k+1)}\otimes\dots \otimes x^{(d)}\rangle=\lambda \langle v^{(k)},x^{(k)}\rangle_k,
\]
for all $v^{(k)} \in \R^{n_k}$. Since $\langle x^{(k)}, x^{(k)} \rangle_k = 1$ for all $k \in [d]$, this can be rewritten as
\[
\langle \T- \lambda x,x^{(1)}\otimes\dots\otimes x^{(k-1)}\otimes v^{(k)}\otimes x^{(k+1)}\otimes\dots \otimes x^{(d)}\rangle=0
\]
for all $v^{(k)} \in \R^{n_k}$, so the statement follows. Part (3) follows from (2), see~\cite[\S 3]{lim2005singular}.
\end{proof}

The following proposition shows that every tensor can be approximated as a sum of critical rank-one approximations: subtracting off best rank-one approximations decreases the norm of a tensor, and the norm of the residual tends to zero in the limit. This has been studied before~\cite{falco2011proper, qi_2011}, but we will provide a proof for completeness.

\begin{Proposition}
	Fix a tensor $\T \in \R^{n_1} \otimes \cdots \otimes \R^{n_d}$. For each $j \in \NN$, let $x_j$ be a best rank-one approximation of $\T - \sum_{i < j}x_i$. Then $\sum_{i = 1}^n x_i \to \T$ as $n \to \infty$. 
\end{Proposition}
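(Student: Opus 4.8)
The plan is to show that the sequence of residuals $r_n := \T - \sum_{i=1}^{n} x^{(i)}$ has norm tending to $0$. The key quantitative input is the relationship between the Frobenius norm of the residual, its spectral norm, and the norm of the best rank-one approximation subtracted at each step.

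First I would record the Pythagorean identity for each deflation step. By \Cref{prop:tangent-space}(3), $x^{(j)}$ is a critical rank-one approximation of $r_{j-1}$, so $r_{j-1} - x^{(j)} = r_j$ is orthogonal to $x^{(j)}$ (it lies in the normal space $N_{x^{(j)}}X$, while $x^{(j)} \in T_{x^{(j)}}X$). Hence $\|r_{j-1}\|^2 = \|r_j\|^2 + \|x^{(j)}\|^2$. Summing over $j$ from $1$ to $n$ gives $\|\T\|^2 = \|r_n\|^2 + \sum_{j=1}^n \|x^{(j)}\|^2$. In particular, $\|r_n\|^2$ is nonincreasing and bounded below by $0$, so it converges to some limit $L \geq 0$, and $\sum_{j=1}^\infty \|x^{(j)}\|^2 \leq \|\T\|^2 < \infty$, which forces $\|x^{(j)}\| \to 0$.

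Next I would use that $x^{(j)}$ is a \emph{best} rank-one approximation, not merely a critical one: for any tensor $\mathcal{S}$, the norm of a best rank-one approximation equals the spectral norm $\|\mathcal{S}\|_\sigma$. Applied to $\mathcal{S} = r_{j-1}$, this yields $\|x^{(j)}\| = \|r_{j-1}\|_\sigma$. Combining with the previous paragraph, $\|r_{j-1}\|_\sigma \to 0$ as $j \to \infty$. Now I invoke the fact that the spectral norm and the Frobenius norm are equivalent norms on the finite-dimensional space $\R^{n_1}\otimes\cdots\otimes\R^{n_d}$ (indeed $\|\cdot\|_\sigma \leq \|\cdot\| \leq C\|\cdot\|_\sigma$ for a constant $C$ depending only on the dimensions — one can take $C = \sqrt{\prod_k n_k}$ or a sharper bound, but any finite constant suffices). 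Therefore $\|r_n\| \leq C \|r_n\|_\sigma \to 0$, which gives $\sum_{i=1}^n x^{(i)} \to \T$. This also shows the limit $L$ above is $0$.

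The main obstacle — or rather the one subtlety worth being careful about — is that monotonic decrease of $\|r_n\|$ alone does not give convergence to $\T$; a priori the residuals could stall at a nonzero tensor all of whose rank-one approximations happen to be small. This is precisely where the best (as opposed to merely critical) choice of $x^{(j)}$ enters: it identifies $\|x^{(j)}\|$ with the spectral norm of the residual, so that $\|x^{(j)}\| \to 0$ actually forces the residual itself to zero via norm equivalence. One should note that $\|\mathcal{S}\|_\sigma = 0$ if and only if $\mathcal{S} = 0$, which is immediate since $\|\mathcal{S}\|_\sigma = \max_{\|v_k\|=1} |\mathcal{S}(v_1,\dots,v_d)|$ vanishes exactly when all entries of $\mathcal{S}$ vanish. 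I would also remark that the argument does not require uniqueness of the best rank-one approximation at any step, nor any genericity assumption on $\T$.
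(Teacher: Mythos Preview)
Your proof is correct and uses the same three ingredients as the paper's proof: the Pythagorean identity $\|r_{j-1}\|^2 = \|r_j\|^2 + \|x^{(j)}\|^2$, the fact that $\|x^{(j)}\| = \|r_{j-1}\|_\sigma$ for a \emph{best} rank-one approximation, and the equivalence of the Frobenius and spectral norms. The only difference is in how these are assembled: the paper combines them into the one-step estimate $\|r_j\|^2 = \|r_{j-1}\|^2 - \|r_{j-1}\|_\sigma^2 \le (1-c^2)\|r_{j-1}\|^2$ and hence obtains the explicit geometric rate $\|r_n\|^2 \le (1-c^2)^n\|\T\|^2$, whereas you argue softly via $\sum_j \|x^{(j)}\|^2 < \infty \Rightarrow \|r_{j-1}\|_\sigma = \|x^{(j)}\| \to 0 \Rightarrow \|r_j\| \to 0$. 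Both are fine; the paper's version additionally yields an exponential convergence rate.
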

\begin{proof}
We show that $\| \T - \sum_{i=1}^n x_i \| \to 0$ as $n \to \infty$.  Let $\| \cdot \|_\sigma$ denote the the spectral norm. 
We have $\| x_1 \| = \| \T \|_\sigma$, by the definition of best rank-one approximation.
Moreover, $\|\T \|^2 = \| \T - x_1 \|^2 + \| x_1 \|^2$,
since a critical rank-one approximation $x_1$ and its residual $\T - x_1$ are orthogonal.
Hence
\[ \| \T - x_1 \|^2 = \| \T \|^2 - \| \T \|_\sigma^2 \leq (1 - c^2) \|\T \|^2 ,\]
where the constant $c >0$ relates the spectral and Frobenius norms; i.e. $c\| \sS\| \leq \| \sS\|_\sigma$ for all tensors $\sS \in \R^{n_1} \otimes \cdots \otimes \R^{n_d}$. This constant exists because two norms in a finite-dimensional vector space are equivalent. We have $0 < c < 1$, since $ \| \sS\|_\sigma < \| \sS\|$ if $\sS$ is not a rank-one tensor.
Repeating the above for iterated best rank-one approximations yields 
\[
\| \T - \sum_{i \leq j} x_i\|^2 \leq (1 - c^2) \| \T - \sum_{i < j} x_i\|^2 \leq \cdots \leq (1 - c^2)^j \| \T\|^2.
\] 
Since $0 < c < 1$, we have $ \| \T - \sum_{i \leq n} x_i\| \to 0$ as $n \to \infty$. 
\end{proof}

The previous result shows that one can use \Cref{alg:1} for \emph{any} input tensor $\T$ and get arbitrarily good approximations, from a numerical perspective. We study when these decompositions are exact, not just arbitrarily good. Moreover, as shown in \Cref{ex:order-dependent}, these decompositions may be order-dependent. We are interested in decompositions that are valid independent of the order. The following result characterizes these decompositions.

\nonorderdep*
\begin{proof}
    For a rank-one tensor $x = x^{(1)} \otimes \cdots \otimes x^{(d)} \in X \subset \R^{n_1} \otimes \cdots \otimes \R^{n_d}$ we have that $x$ is a critical rank-one approximation of a tensor~$\T$ if and only if $\T-x \in N_xX$, and $T_xX = \sum_{k=1}^d x^{(1)} \otimes \cdots \otimes \R^{n_k} \otimes \cdots \otimes x^{(d)}$, see \Cref{prop:tangent-space}.  
    
    If $\T =\sum_{i=1}^r x_i$ is two-orthogonal, then for all $i \neq j$ we have $x_i \in N_{x_j}X$. Therefore, any linear combination $\sum_{i \neq j} \alpha_i x_i$ lies in $N_{x_j}X$. Hence, $x_j$ is a critical rank-one approximation of $\T - \sum_{i \in \sI} x_i$ for all $\sI \subseteq \{1, \dots, r\} \setminus \{ j\}$.

    Conversely, consider a tensor $\T = \sum_{i=1}^r x_i$ such that $x_j$ is a critical rank-one approximation of $\T - \sum_{i \in \sI} x_i$ for all $\sI \subseteq \{1, \dots, r\} \setminus \{ j\}$. Then, for any $i \neq j$, $x_j$ is a critical rank-one approximation of $x_i + x_j$. Hence, $x_i \in N_{x_j}X$. So, for all $k \in [d]$ and all $v_k \in \R^{n_k}$ we have
    \[
    \langle x_i^{(1)} \otimes \cdots \otimes x_i^{(d)}, x_j^{(1)} \otimes\cdots \otimes v^{(k)} \otimes \cdots \otimes x_j^{(d)}\rangle = \langle x_i^{(k)}, v^{(k)} \rangle_k \prod_{l \neq k} \langle x_i^{(l)}, x_j^{(l)}\rangle_l = 0.
    \]
    These equations are satisfied if and only if there exist two indices $k_1 \neq k_2 \in [d]$ such that~$x_i^{(k_1)} \perp x_j^{(k_1)}$ and $x_i^{(k_2)} \perp x_j^{(k_2)}$. This holds for all pairs of summands, so $\T$ is a two-orthogonal tensor.
\end{proof}

\begin{Definition}\label{def:W_r}
    Define $W_r$ to be the variety of length-at-most-$r$ two-orthogonal tensors
    \[
    W_r=\overline{\left\{ \sum_{i=1}^{r} x_i^{(1)}\otimes\dots\otimes x_i^{(d)} \mid x_{i}^{(k_1)}\perp x_{j}^{(k_1)}, x_{i}^{(k_2)}\perp x_{j}^{(k_2)}\text{ for at least two indices } k_1\neq k_2\right\}}.
    \]
    We allow zero summands here, so the length can be smaller than $r$. For $r=1$, no conditions are imposed, so $W_1 = X$ is the set of rank-one tensors.
    The overline denotes Zariski closure. We define the \emph{two-orthogonal variety} to be $W = \overline{\bigcup_{r} W_r}$.
\end{Definition}

\begin{Remark}
    The two-orthogonal variety $W \subseteq V = \R^{n_1} \otimes \cdots \otimes \R^{n_d}$ is invariant under the action of the product of special orthogonal groups $G = \SO(n_1)\times \cdots \times \SO(n_d)$ on $V$, acting by change of basis in each factor.
\end{Remark}

\begin{Remark}\label{rk:equations}
    The equations defining some components of the two-orthogonal variety relate to the odeco equations \cite[Theorem 9]{boralevi2017orthogonal}. For example, consider a two-orthogonal tensor $\T = x_1 + x_2 \in W_2 \subset (\R^2)^{\otimes d}$ such that~$x_1$ and $x_2$ are orthogonal in the $k_1$-th and~$k_2$-th factors, then $\T \star_{k_i} \T \in \bigotimes_{l \neq k_i} S^2(\R^2)$ for $i = 1,2$, where $\star_{k_i}$ denotes contraction along the $k_i$-th factor and $S^2(\R^2)$ denotes the space of $2 \times 2$ symmetric matrices. One can check that these equations define the components of $W_2$ for~$(\R^2)^{\otimes 3}$ (see \Cref{sec:2x2x2}) and $(\R^2)^{\otimes 4}$. A similar argument may be used for some components of $W_n \subset (\R^n)^{\otimes d}$. Finding the implicit equations that describe the two-orthogonal variety is left to future work.
\end{Remark}

\subsection{Maximal length of two-orthogonal decompositions.} From the definition of $W_r$, we have a chain of varieties $X = W_1 \subseteq W_2 \subseteq \cdots \subseteq W_r \subseteq \cdots$. The following result shows that this chain stabilizes in finitely many steps. That is, there exists $N \in \NN$ such that~$W_N = W_{N+1} = \cdots$.

\begin{Lemma}\label{lemma:stabilization}
    A two-orthogonal decomposition of a tensor in $\mathbb{R}^{n_1} \otimes \cdots \otimes \mathbb{R}^{n_d} $ has at most $N=\min_{k \in [d]} \prod_{j \neq k} n_j$ summands. In particular, the two-orthogonal variety is $W = W_N$.
\end{Lemma}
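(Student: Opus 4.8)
The plan is to fix an arbitrary mode $k \in \{1,\dots,d\}$ and to bound the number of summands of a two-orthogonal decomposition $\T = \sum_{i=1}^{r} x^{(i)}$ by $\prod_{j\neq k} n_j$; minimizing over $k$ then gives $r \le N$. The reduction is to pass from each summand $x^{(i)} = x_1^{(i)} \otimes \cdots \otimes x_d^{(i)}$ to its ``co-mode-$k$'' part $\hat{x}^{(i)} := \bigotimes_{j\neq k} x_j^{(i)} \in \bigotimes_{j\neq k}\R^{n_j}$, and to observe that two-orthogonality makes the vectors $\hat{x}^{(i)}$ pairwise orthogonal.

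Here are the steps. First, for $i \neq j$, two-orthogonality provides indices $k_1 \neq k_2$ with $x^{(i)}_{k_1}\perp x^{(j)}_{k_1}$ and $x^{(i)}_{k_2}\perp x^{(j)}_{k_2}$; since $k_1 \neq k_2$, at least one of them, say $l$, differs from $k$. Then $\langle \hat{x}^{(i)}, \hat{x}^{(j)}\rangle = \prod_{m\neq k}\langle x^{(i)}_m, x^{(j)}_m\rangle_m = 0$, because the factor indexed by $l$ vanishes. Second, each $\hat{x}^{(i)}$ is a tensor product of nonzero vectors, hence nonzero, with $\langle \hat{x}^{(i)}, \hat{x}^{(i)}\rangle = \prod_{m\neq k}\langle x^{(i)}_m, x^{(i)}_m\rangle_m > 0$ since the forms $\langle\cdot,\cdot\rangle_m$ are positive definite. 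A family of nonzero, pairwise orthogonal, non-isotropic vectors is linearly independent, so $r \le \dim\bigl(\bigotimes_{j\neq k}\R^{n_j}\bigr) = \prod_{j\neq k} n_j$. Taking the minimum over $k$ yields $r \le N$.

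For the ``in particular'' claim, note that the $W_r$ form an increasing chain, since allowing zero summands makes any length-$\le r$ configuration also a length-$\le r+1$ configuration. By the bound just proved, any collection of more than $N$ rank-one terms satisfying the pairwise two-orthogonality constraint contains at most $N$ nonzero terms, so its sum already lies in the defining set of $W_N$. Hence $W_r = W_N$ for all $r \ge N$, the union $\bigcup_r W_r = W_N$ is Zariski closed, and therefore $W = \overline{\bigcup_r W_r} = W_N$.

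I do not expect a genuine obstacle here: the entire content is the single observation that orthogonality in two factors persists, in at least one of them, after deleting whichever mode $k$ one chooses. The only points that need care are to keep $k$ fixed but arbitrary throughout the first part, and to use positive-definiteness of the $\langle\cdot,\cdot\rangle_k$ so that the pairwise orthogonal vectors $\hat{x}^{(i)}$ are genuinely linearly independent (no isotropy issues arise).
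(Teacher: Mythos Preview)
Your proof is correct and follows essentially the same approach as the paper's: drop one tensor factor, observe that two-orthogonality guarantees the remaining rank-one tensors are still pairwise orthogonal (hence linearly independent, using positive-definiteness), and bound $r$ by the dimension of the smaller tensor space. The only cosmetic difference is that the paper first orders $n_1\le\cdots\le n_d$ and drops the last factor, whereas you drop an arbitrary mode $k$ and minimize afterwards.
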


\begin{proof}
    Fix $\T = x_1 + \cdots + x_r \in W_r \subset \R^{n_1} \otimes \cdots \otimes \R^{n_d}$, where $x_i = x_i^{(1)} \otimes x_i^{(2)} \otimes \cdots \otimes x_i^{(d)} \neq 0$ for all $i \in [r]$ and these summands are orthogonal in at least two factors. Without loss of generality, suppose that $n_1 \leq \cdots \leq n_d$. Consider the tensor $\tilde{\T} = \tilde{x}_1 + \cdots + \tilde{x}_r \in \R^{n_1} \otimes \cdots \otimes \R^{n_{d-1}}$, where $\tilde{x}_i = x_i^{(1)}  \otimes \cdots \otimes x_i^{(d-1)}$ for all $i \in [r]$.
    The tensors $\tilde{x}_i$ and $\tilde{x}_j$ are orthogonal (in at least one factor) for all $i \neq j$, since $x_i$ and $x_j$ are orthogonal in at least two factors. In particular, $\tilde{x}^{(1)}, \dots, \tilde{x}^{(r)}$ are linearly independent. Indeed, if we had $\tilde{x}_i = \sum_{j \neq i} \alpha_j \tilde{x}_j$, taking the inner product with~$\tilde{x}_i$ leads to $\| \tilde{x}_i\|^2 = 0$, a contradiction. In conclusion,
    \[
    r = \dim \left(\mathrm{span}\{ \tilde{x}^{(1)}, \dots, \tilde{x}^{(r)} \} \right) \leq \dim\left( \R^{n_1} \otimes \cdots \otimes \R^{n_{d-1}} \right) = n_1n_2 \cdots n_{d-1}. \qedhere
    \]
\end{proof} 

\begin{Definition}[{see~\cite{mckay2008hypercubes}}]
A \emph{Latin square} $L$ is an $n \times n$ matrix with elements in $[n]$ such that every row and every column of $L$ are a permutation of $[n]$. Put differently, for every $i \in [n]$  the maps $k \mapsto L(i,k)$ and $k \mapsto L(k,i)$ are permutations of $n$. More generally, a \emph{Latin hypercube} is an array $L$ indexed by $[n]^d$ satisfying the following: for every $j \in [d]$ and every $(i_1, \dots, i_{j-1}, i_{j+1}, \dots, i_d) \in [n]^{d-1}$ the map $k \mapsto L(i_1, \dots, i_{j-1}, k, i_{j+1}, \dots, i_{d})$ is a permutation of $[n]$. 
\end{Definition}

There exists a Latin hypercube for every $n$ and $d$, e.g. $L(i_1, \dots , i_{d}) = i_1 + \dots + i_{d} \mod n$, where the sum is understood to be an element in $[n]$. A Latin square $L$ gives two-orthogonal tensors of order three, as follows. Consider the set $\sI = \{ (i,j, L(i,j)) \mid i,j \in [n]\} \subset [n]^3$. We can identify each tuple $\bi = (i_1, i_2, i_3) \in \sI$ with the rank-one tensor $e_{i_1} \otimes e_{i_2} \otimes e_{i_3}$. Every pair of distinct tuples $\bi, \bj \in \sI$ differs in at least two indices. Therefore, the tensors $e_{i_1} \otimes e_{i_2} \otimes e_{i_3}$ and $e_{j_1} \otimes e_{j_2} \otimes e_{j_3}$ are orthogonal in at least two factors.

\begin{Example} Let $V = (\R^3)^{\otimes 3}$. A $3 \times 3$ Latin square corresponds to a two-orthogonal tensor in $V$ with 9 summands. For example,
\begin{center}
    \vspace{0.5em}
    \begin{tabular}{|c|c|c|cl}
        \cline{1-3} $1$& $2$& $3$ & & $e_1 \otimes e_1 \otimes e_1 + e_1 \otimes e_2 \otimes e_2 + e_1 \otimes e_3 \otimes e_3 +$ \\
        \cline{1-3} $2$&$3$ &$1$ & $\leftrightarrow$ & $e_2 \otimes e_1 \otimes e_2 + e_2 \otimes e_2 \otimes e_3 + e_2 \otimes e_3 \otimes e_1 +$ \\
        \cline{1-3} $3$&$1$ &$2$ & & $e_3 \otimes e_1 \otimes e_3 + e_3 \otimes e_2 \otimes e_1 + e_3 \otimes e_3 \otimes e_2$. \\
        \cline{1-3}
        \end{tabular}
        \vspace{0.5em}
\end{center}
    \noindent \Cref{lemma:stabilization} implies that the maximum number of summands of a two-orthogonal tensor in $V$ is nine. Each summand can be scaled arbitrarily and two-orthogonality is preserved.
\end{Example}

\begin{Lemma}\label{lemma:construction_maximal}
    There exists a two-orthogonal decomposition in $V = \R^{n_1} \otimes \cdots \otimes \R^{n_d}$ with $N = \min_{k \in [d]} \prod_{j \neq k} n_j$ summands.
\end{Lemma}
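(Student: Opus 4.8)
The plan is to exhibit an explicit length-$N$ two-orthogonal decomposition all of whose summands are standard basis tensors $e_{i_1} \otimes \cdots \otimes e_{i_d}$; two such tensors are orthogonal in exactly the factors where their index tuples differ, so it suffices to produce $N$ tuples in $[n_1] \times \cdots \times [n_d]$ that pairwise differ in at least two coordinates. After relabeling the factors I would assume $n_1 \le \cdots \le n_d$, so that $N = n_1 \cdots n_{d-1}$, and look for the index set as a ``graph'' over the first $d-1$ coordinates,
\[
\sI = \Big\{ \big(i_1, \dots, i_{d-1}, f(i_1, \dots, i_{d-1})\big) \ :\ i_k \in [n_k] \text{ for all } k \in [d-1] \Big\},
\]
for a suitable function $f \colon [n_1] \times \cdots \times [n_{d-1}] \to [n_d]$. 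Since the last entry of each tuple is determined by the first $d-1$, the set $\sI$ has exactly $n_1 \cdots n_{d-1} = N$ elements, regardless of the choice of $f$.

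The condition ``distinct tuples differ in at least two coordinates'' reduces, for tuples of this form, to a coloring property of $f$: if $(i_1,\dots,i_{d-1})$ and $(j_1,\dots,j_{d-1})$ differ in exactly one coordinate then $f$ must take different values on them (if they differ in two or more of the first $d-1$ coordinates there is nothing to check, and if they agree in all of them the two tuples coincide). My choice would be $f(i_1, \dots, i_{d-1}) \equiv i_1 + \cdots + i_{d-1} \pmod{n_d}$, with the residue read as an element of $[n_d]$ under the identification $[n_d] \cong \Z/n_d\Z$. Then, if two tuples differ only in coordinate $k \le d-1$, one computes $f(\bi) - f(\bj) \equiv i_k - j_k \pmod{n_d}$, which is nonzero because $0 < |i_k - j_k| \le n_k - 1 < n_d$. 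Hence any two distinct tuples of $\sI$ differ in at least two coordinates, so $\T := \sum_{\bi \in \sI} e_{i_1} \otimes \cdots \otimes e_{i_d}$ is a two-orthogonal decomposition of length $N$; combined with \Cref{lemma:stabilization}, this length is maximal.

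The only real subtlety — and where I would be most careful — is that the dimensions $n_k$ need not be equal, so one cannot simply invoke a Latin hypercube (which lives on $[n]^d$): the virtue of the mod-$n_d$ sum is precisely that it turns the first $d-1$ coordinates into a valid coloring using only the inequality $n_k \le n_d$. I would also spell out the degenerate case $d = 2$, where the construction reads $\sI = \{(i_1, i_1) : i_1 \in [n_1]\}$ (up to the harmless wraparound when $n_1 = n_2$), recovering the familiar fact that a diagonal matrix of size $\min(n_1,n_2)$ realizes the maximal number of singular vector pairs.
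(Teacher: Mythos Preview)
Your proof is correct and essentially identical to the paper's: both take basis tensors indexed by tuples $(i_1,\dots,i_{d-1}, f(i_1,\dots,i_{d-1}))$ with $f$ a sum modulo an appropriate integer (the paper phrases this as choosing a subarray of a Latin hypercube), and both use the ordering $n_1 \le \cdots \le n_d$ to ensure that a single-coordinate change in the first $d-1$ entries forces $f$ to change. Your mod-$n_d$ formulation handles the unequal-dimension case directly in one stroke, whereas the paper first treats the cube case $(\R^n)^{\otimes d}$ and then restricts to an $n_1\times\cdots\times n_{d-1}$ subarray, but the underlying construction is the same.
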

    
\begin{proof}
    We construct a two-orthogonal decomposition of maximal length using Latin hypercubes. First, suppose $n_1 = \cdots = n_d = n$. Let $L$ be a Latin hypercube indexed by $[n]^{d-1}$. Consider the set $\sI = \{(i_1, \dots, i_{d-1}, L(i_1, \dots , i_{d-1})) \mid i_k \in [n] \}$. Then~$|\sI|= n^{d-1}$ and every pair of tuples in~$\sI$ differ in at least two indices. This gives the family of two-orthogonal tensors in~$(\R^n)^{\otimes d}$:
\[
\sum_{\bi \in \sI} \lambda_{\bi} e_{i_1} \otimes \cdots \otimes e_{i_{d}}
\]
for any $\lambda_\bi \in \R$. Let $V = \R^{n_1} \otimes \cdots \otimes \R^{n_d}$, with $n_1 \leq \dots \leq n_d$. Consider a Latin hypercube indexed by $[n_{d-1}]^{d-1}$ and choose a subarray of format $n_1 \times \cdots \times n_{d-1}$. This gives a two-orthogonal decomposition of length $n_1 \cdots n_{d-1}$ in $V$, following the same reasoning.
\end{proof}

\maximallength*
\begin{proof}
\Cref{lemma:stabilization} implies that a two-orthogonal decomposition cannot have more than $N = \min_{k \in [d]} \prod_{j \neq k} n_j$ summands. \Cref{lemma:construction_maximal} shows how to construct two-orthogonal decompositions with $N$ summands.
\end{proof}

Decompositions obtained from Latin hypercubes satisfy the following property.

\begin{Definition}\label{def:basis-aligned}
    A two-orthogonal decomposition $\T = \sum_{i=1}^{r} x_i^{(1)} \otimes \cdots \otimes x_i^{(d)}$ is
    \emph{basis-aligned} if for all $k \in [d]$ and all $i,j \in [r]$, $x_i^{(k)}$ and $x_j^{(k)}$ are either collinear or orthogonal.
\end{Definition}

Not all two-orthogonal decompositions are basis-aligned. For example, consider the decomposition $\T = e_1 \otimes e_1 \otimes e_1 + (e_1 + e_2) \otimes e_2 \otimes e_2 \in W_2$. We explore two-orthogonal decompositions that are not basis-aligned in \Cref{sec:combinatorial_descriptions}.

\subsection{Two-orthogonal rank} Two-orthogonal decompositions are not unique, in general.
\begin{Example}\label{ex:non-unique-decomposition}
    Let $d \geq 2$, let $V = (\R^2)^{\otimes d}$, and let $\sI$ be the set of binary strings of length $d$ with an even number of ones: $\mathcal{I} = \{ (i_1, \dots, i_d) \in \{0,1 \}^d  \mid \sum_{k=1}^d i_k = 0 \mod 2\}$. Let $\{e_0, e_1\}$ be an orthogonal basis of $\R^2$. The following tensor admits two different two-orthogonal decompositions:
    \[
    \T = \sum_{(i_1, \dots, i_d) \in \sI} e_{i_1} \otimes \cdots \otimes e_{i_d} = \frac{1}{2} (e_0 + e_1)^{\otimes d} + \frac{1}{2}(e_0 - e_1)^{\otimes d}.
    \]
    The first decomposition has maximal length, but the second one shows that $\T \in W_2$.
\end{Example}
While \Cref{lemma:construction_maximal} constructs two-orthogonal decompositions of maximal length, it is unclear whether these tensors may have two-orthogonal decompositions with fewer terms. We do not know whether the stabilization of the chain $W_1 \subseteq W_2 \subseteq \cdots $ occurs exactly in $N = \min_{k \in [d]} \prod_{j \neq k} n_j$ steps or before. That is, we do not know whether~$W_{N-1} = W_N$. 
The previous example motivates the following definition.

\begin{Definition}
    The {\em two-orthogonal rank} of a two-orthogonal tensor $\T \in \mathbb{R}^{n_1} \otimes \cdots \otimes \mathbb{R}^{n_d}$ is the smallest number $r$ such that $\T$ admits a two-orthogonal decomposition of length $r$. 
\end{Definition}

The rank of a tensor $\T$ is the smallest number $r$ such that $\T$ can be expressed as the sum of $r$ rank-one tensors. 
The two-orthogonal rank and rank coincide for matrices, due to the singular value decomposition. For higher-order tensors, we can only say that the two-orthogonal rank of $\T$ is at least the rank of $\T$. We will see in \Cref{sec:2x2x2} that for $2\times 2 \times 2$ tensors the maximum two-orthogonal rank is 4 while the maximum rank is 3.

The notions of rank and two-orthogonal rank could potentially coincide up to the maximal rank. For example, in $(\R^n)^{\otimes 3}$, the maximal rank is at most $\frac{1}{2}n(n+2)-1$ \cite[Theorem 3.4]{sumi2010maximal}, while the maximal potential two-orthogonal rank is $n^2$. \Cref{prop:rankWr} shows that this is the case for small enough ranks.
The \emph{border rank} of a tensor $\T$ is the smallest $r$ such that $\T = \lim_{\epsilon \to 0} \T_\epsilon$ where each $\T_\epsilon$ has rank~$r$, and it may be smaller than the rank of $\T$ (see \Cref{ex:rank222}).

\begin{Proposition} \label{prop:rankWr}
    In $(\R^{n})^{\otimes d}$,  $W_{r-1} \subsetneq W_{r}$ if $r \leq n$. Moreover, a general tensor in $W_r \setminus W_{r-1}$ with $r \leq n$ has border rank $r$, so it has rank $r$.
\end{Proposition}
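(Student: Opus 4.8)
The plan is to compare $W_{r-1}$ and $W_r$ with the secant varieties of $X$. For $k\ge 1$ let $\sigma_k$ denote the variety of tensors of border rank at most $k$, the Zariski closure of the set of tensors of rank at most $k$; it is closed. Since a tensor with a two-orthogonal decomposition of length at most $k$ is in particular a sum of at most $k$ rank-one tensors, $W_k\subseteq\sigma_k$ for every $k$. In particular every tensor of $W_{r-1}$ has border rank at most $r-1$, and every tensor of $W_r$ has border rank at most $r$. Thus it is enough to exhibit one tensor of $W_r$ of border rank exactly $r$ — this already forces $W_{r-1}\subsetneq W_r$, since that tensor cannot lie in $\sigma_{r-1}\supseteq W_{r-1}$ — and then to show that the locus of such tensors is dense in $W_r\setminus W_{r-1}$. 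For a tensor $\T$ in that locus, border rank $\le\rank(\T)\le r$ together with border rank $r$ gives $\rank(\T)=r$ as well.

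For the witness I would use $r\le n$: take $\T_0=\sum_{i=1}^r\lambda_i\,e_i\otimes\cdots\otimes e_i\in(\R^n)^{\otimes d}$ with all $\lambda_i$ nonzero. This is an orthogonal, hence two-orthogonal, decomposition of length $r$, so $\T_0\in W_r$. Flattening $(\R^n)^{\otimes d}\cong\R^n\otimes(\R^n)^{\otimes(d-1)}$ carries $\T_0$ to the $n\times n^{d-1}$ matrix whose $i$-th row is $\lambda_i\,e_i^{\otimes(d-1)}$ for $i\le r$ and zero otherwise, a matrix of rank $r$. Since the rank of any flattening is a lower bound for border rank, and $\T_0$ is a sum of $r$ rank-one tensors, the border rank of $\T_0$ is exactly $r$ (and its rank is $r$). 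Hence $\T_0\notin\sigma_{r-1}$, which proves $W_{r-1}\subsetneq W_r$.

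For density, observe that $W_r\cap\sigma_{r-1}$ is Zariski closed in $W_r$, proper by the previous step, and contains $W_{r-1}$. On any irreducible component $Z$ of $W_r$ with $Z\not\subseteq\sigma_{r-1}$, the condition of having border rank at most $r-1$ cuts out a proper closed subset, so a general point of $Z$ has border rank exactly $r$ (recall $Z\subseteq W_r\subseteq\sigma_r$). It remains to check that no irreducible component of $W_r$ meeting $W_r\setminus W_{r-1}$ lies in $\sigma_{r-1}$. Such a component is not a component of $W_{r-1}$, so, since a two-orthogonal decomposition of length $s<r$ already produces a tensor of $W_{r-1}$, it is the closure of a family of two-orthogonal tensors of length exactly $r$ whose summands are orthogonal in a pattern prescribed by some graph on $r$ vertices; concretely it is the closure of the image, under the summation map, of a parameter space built from copies of $\R^n$ cut by orthogonality quadrics. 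That image contains the odeco tensor $\T_0$ of the previous paragraph, because taking every factor of the $i$-th summand to be $e_i$ satisfies all orthogonality constraints imposed by any such graph. As $\T_0$ has border rank $r$, no such component lies in $\sigma_{r-1}$, so the border-rank-$r$ locus is dense in $W_r\setminus W_{r-1}$, and the proposition follows.

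The routine ingredients are the inclusion $W_k\subseteq\sigma_k$ and the flattening computation. The step I expect to need the most care is the last one: identifying each irreducible component of $W_r$ outside $W_{r-1}$ as the closure of a single orthogonality-pattern family — which rests on the irreducibility of the relevant orthogonality varieties and the graph parametrization developed in \Cref{sec:combinatorial_descriptions} — and verifying that the rank-$r$ odeco locus is contained in every such family. This is also where ``a general tensor in $W_r\setminus W_{r-1}$'' must be read as a general tensor in each irreducible component of $W_r$ not contained in $W_{r-1}$.
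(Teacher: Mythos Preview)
Your approach is essentially the same as the paper's: both use the odeco tensor $\sum_{i=1}^r\lambda_i e_i^{\otimes d}$ as the witness, verify its border rank via a flattening, deduce $W_{r-1}\subsetneq W_r$ from $W_{r-1}\subseteq\sigma_{r-1}$, and then argue that odeco tensors of rank $r$ lie in every irreducible component of $\overline{W_r\setminus W_{r-1}}$, so a general point of each such component has border rank $r$. The paper states this last step in one sentence (``Odeco tensors of rank $r$ lie in the intersection of all the irreducible components of $\overline{W_r\setminus W_{r-1}}$''), whereas you unpack it through the graphical parametrization and note explicitly that the fully orthogonal assignment $x_k^{(i)}=e_i$ satisfies any prescribed orthogonality pattern on $r\le n$ vertices; your caution about the irreducibility of each orthogonality family is warranted (cf.\ \Cref{remark:not-prime}), but since the odeco point specializes any valid pattern, it lies in the closure of each family and hence in each component, which is all that is needed.
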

\begin{proof}
     After an orthogonal change of basis, an odeco tensor of two-orthogonal rank $r$ is of the form $\T = \sum_{i=1}^{r} \lambda_i e_i^{\otimes d}$ for $\lambda_i \in \R$. This tensor has border rank $r$, e.g. by looking at the flattenings \cite{landsberg2011tensors}. Therefore, $W_{r-1} \neq W_r$. A property holds for a general tensor if it holds on a dense open set, and odeco tensors of rank $r$ lie in the intersection of all the irreducible components of $\overline{W_r \setminus W_{r-1}}$.
\end{proof}

This result can be improved by considering the following class of two-orthogonal tensors. A decomposition $\T = \sum_{i=1}^r x_i^{(1)} \otimes \cdots \otimes x_i^{(d)} \in \R^{n_1} \otimes \cdots \otimes \R^{n_d}$ is called \emph{strong two-orthogonal} (\cite[Definition 3.5]{vannieuwenhoven2014generic}) if the orthogonalities between summands always occur in the same partition of the factors: there exists a nonempty set $\sJ \subsetneq [d]$ such that for all $i\neq j$ we have  $x_{i}^{(k_1)} \perp x_{j}^{(k_1)}$ and~$x_{i}^{(k_2)} \perp x_{j}^{(k_2)}$ for some indices $k_1 \in \sJ$ and $k_2 \in [d] \setminus \sJ$. Put differently, the decomposition is still two-orthogonal when viewed as a matrix decomposition in~$\left(\bigotimes_{j \in \sJ} \R^{n_j} \right) \otimes \left(\bigotimes_{j \not\in \sJ} \R^{n_j} \right)$.

\begin{Proposition} \label{prop:bound-strong-two-orthogonal}
    Let $V = \R^{n_1} \otimes \cdots \otimes \R^{n_d}$ and let $m = \max_{\sJ \subsetneq [d]} \min \{ \prod_{j \in \sJ} n_j, \prod_{j \not\in \sJ} n_j\}$. Then, for all $r \leq m$, a sufficiently general tensor of $W_r$ has border rank $r$. In particular, we have that $W_{m-1} \subsetneq W_m$.
\end{Proposition}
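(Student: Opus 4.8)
The plan is to reduce the problem to the matrix case and then invoke \Cref{prop:rankWr} (or rather its proof idea) after flattening. Fix a partition $\sJ \subsetneq [d]$ achieving the maximum $m = \min\{\prod_{j \in \sJ} n_j, \prod_{j \notin \sJ} n_j\}$, and set $p = \prod_{j \in \sJ} n_j$, $q = \prod_{j \notin \sJ} n_j$, so $m = \min\{p,q\}$. Flattening along $\sJ$ identifies $V = \R^{n_1} \otimes \cdots \otimes \R^{n_d}$ with $\R^p \otimes \R^q$; under this identification a rank-one tensor $x = x_1 \otimes \cdots \otimes x_d$ becomes the rank-one matrix $a(x) \otimes b(x)$ where $a(x) = \bigotimes_{j \in \sJ} x_j \in \R^p$ and $b(x) = \bigotimes_{j \notin \sJ} x_j \in \R^q$. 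The key observation is that if $x^{(1)}, \dots, x^{(r)}$ are pairwise orthogonal in at least two factors, then for each pair $i \neq j$ at least one of the two orthogonalities falls inside $\sJ$ or inside $[d] \setminus \sJ$ — but that is not automatic; two-orthogonality only guarantees orthogonality in two factors, which could be one in $\sJ$ and one outside, giving a \emph{strong} two-orthogonal decomposition relative to this very partition. To handle a \emph{general} element of $W_r$, I would instead construct an explicit family: take a Latin-square-type index set as in \Cref{lemma:construction_maximal} but adapted so that the summands $e_{i_1} \otimes \cdots \otimes e_{i_d}$, when flattened along $\sJ$, have the $a(x^{(i)})$ all equal to distinct standard basis vectors of $\R^p$ (assuming $p \leq q$; relabel otherwise). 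Concretely, pick $r \leq m = p$ distinct multi-indices $\bi^{(1)}, \dots, \bi^{(r)}$ with distinct $\sJ$-parts, and for each one a distinct $([d]\setminus\sJ)$-part, so that the flattened summands are $e_{s} \otimes e_{t_s}$ for $r$ distinct pairs $(s, t_s)$. These are pairwise two-orthogonal in $V$ (they differ in at least two coordinates, one inside $\sJ$ and one outside) and span, after flattening, a rank-$r$ matrix.

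With such a family in hand, consider the tensor $\T = \sum_{i=1}^r \lambda_i x^{(i)}$ with generic $\lambda_i \in \R$. Its flattening along $\sJ$ is a matrix of rank exactly $r$ (it has $r$ nonzero entries in distinct rows and columns). Hence $\T$ has border rank at least $r$, so exactly $r$ if we also exhibit an upper bound — and $\T$ visibly has rank $\leq r$ by construction. This shows $W_r$ contains tensors of border rank $r$. To conclude $W_{m-1} \subsetneq W_m$, note every tensor in $W_{m-1}$ is a limit of sums of $m-1$ rank-one tensors, hence has border rank $\leq m-1 < m$; since $W_m$ contains a tensor (namely the one just built with $r = m$) of border rank $m$, the inclusion $W_{m-1} \subseteq W_m$ is strict. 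Finally, to get the full statement "for all $r \leq m$, a sufficiently general tensor of $W_r$ has border rank $r$" — as opposed to merely "some tensor" — I would argue that $W_r$ is irreducible is false in general, so instead I would phrase it as: on each irreducible component of $W_r$ that contains the constructed family, border rank $r$ holds on a dense open subset, because border rank $\geq r$ is a Zariski-open condition (non-vanishing of some $r \times r$ minor of a flattening) and it is satisfied at one point, while border rank $\leq r$ holds on all of $W_r$ by \Cref{thm:maximal_length}-type reasoning... wait, that last bound is $N$, not $r$. So more carefully: border rank $\leq r$ on $W_r$ holds because $W_r$ is by definition the closure of a set of rank-$\leq r$ tensors, and border rank is lower semicontinuous, hence $\leq r$ on the closure. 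Combined with openness of border-rank-$\geq r$, a general point of any component meeting our family has border rank exactly $r$.

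The main obstacle I anticipate is the precise combinatorial construction ensuring that the chosen $r$ summands are simultaneously (i) pairwise two-orthogonal \emph{as $d$-tensors} and (ii) flatten, along the optimal partition $\sJ$, to a matrix of honest rank $r$. Property (i) needs each pair to differ in $\geq 2$ coordinates; property (ii) needs the $\sJ$-parts to be $r$ distinct basis vectors and likewise the complementary parts. When $p \leq q$ this is easy: index by $s \in \{1,\dots,r\} \subseteq \{1,\dots,p\}$, let the $\sJ$-part of the $s$-th summand encode $s$ and let the complementary part encode $s$ as well (possible since $r \leq p \leq q$); then two summands $s \neq s'$ differ in the $\sJ$-block and in the complementary block, hence in at least two of the original $d$ coordinates. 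A minor subtlety is that an equal value $s$ in the $\sJ$-block might still correspond to the \emph{same} coordinate in one factor and differing coordinates only after aggregation, but choosing a Latin-hypercube-style labelling within each block handles this cleanly. Once the construction is pinned down, the rest is semicontinuity of (border) rank, which is standard.
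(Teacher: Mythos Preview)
Your proposal is correct and follows essentially the same approach as the paper: construct a strong two-orthogonal decomposition with respect to the optimal partition $\sJ$ (using standard basis vectors so that the $\sJ$-flattening is a rank-$r$ matrix), then use that border rank is bounded above by $r$ on all of $W_r$ and bounded below by $r$ on a Zariski-open set via the nonvanishing of an $r\times r$ minor. The paper compresses this to two sentences by invoking the SVD and the known fact that strong two-orthogonal decompositions of length $r$ yield tensors of border rank $r$, and it interprets ``sufficiently general'' exactly as you do---holding on some irreducible component of $W_r\setminus W_{r-1}$.
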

\begin{proof}
    By sufficiently general we mean that it holds for some components of~$W_r \setminus W_{r-1}$. Consider a nonempty $\sJ \subsetneq[d]$. Picking orthogonal basis for $\bigotimes_{j \in \sJ} \R^{n_j}$ and $\bigotimes_{j \not\in \sJ} \R^{n_j}$ we can construct a strong two-orthogonal decomposition of length $\min \{ \prod_{j \in \sJ} n_j, \prod_{j \not\in \sJ} n_j\}$.
    By flattening the tensor into a matrix, the singular value decomposition implies that strong two-orthogonal decompositions of length $r$ lead to tensors of border rank $r$, see \cite[Theorem 3.6]{vannieuwenhoven2014generic}.
\end{proof}
\begin{Example}
    When $V = (\R^{n})^{\otimes d}$, \Cref{prop:bound-strong-two-orthogonal} implies that $W_{r-1} \subsetneq W_r$ if $r \leq n^{\lfloor \frac{d}{2}\rfloor}$.
\end{Example}

\begin{Conjecture}\label{conj:rank}
    Let $r_g$ be the generic rank in $\CC^{n_1} \otimes \cdots \otimes \CC^{n_d}$. For every $r \leq r_g$, a sufficiently general tensor $\T \in W_r \subset \R^{n_1} \otimes \cdots \otimes \R^{n_d}$ has rank $r$.
\end{Conjecture}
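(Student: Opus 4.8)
The plan is to reduce the conjecture to a border-rank lower bound on carefully chosen two-orthogonal tensors, and then to attack that lower bound. The easy half is automatic: if $\T$ has a two-orthogonal decomposition of length at most $r$ then $\rank(\T)\le r$, so the content is the reverse inequality on a dense subset of $W_r$. Since the genuine length-$\le r$ two-orthogonal tensors form a constructible set whose closure is $W_r$, a general point of each irreducible component $Z$ of $W_r$ carries such a decomposition, hence has rank at most $r$; it remains to show its rank is at least $r$. Every length-$\le(r-1)$ two-orthogonal tensor has rank at most $r-1$, so $W_{r-1}\subseteq\sigma_{r-1}(X)$, the $(r-1)$-st secant variety of the Segre cone $X$. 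Therefore a \emph{single} two-orthogonal tensor of border rank exactly $r$ certifies that the component $Z$ of $W_r$ through it is not contained in $\sigma_{r-1}(X)$; by lower semicontinuity of border rank, the border rank is at least $r$ on a dense open subset of $Z$, and there $r\le\underline{\rank}(\T)\le\rank(\T)\le r$, so the rank (and the two-orthogonal rank) equals $r$. Thus the conjecture is equivalent to the assertion that for every $r\le r_g$ some two-orthogonal tensor of two-orthogonal rank at most $r$ has border rank exactly $r$, i.e.\ $W_r\not\subseteq\sigma_{r-1}(X)$ — which incidentally would also sharpen \Cref{prop:bound-strong-two-orthogonal} by showing $W_{r-1}\subsetneq W_r$ for all $r\le r_g$.

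Second, I would build candidates and try to compute their border rank. For $r\le m=\max_{\emptyset\neq\sJ\subsetneq[d]}\min\{\prod_{j\in\sJ}n_j,\ \prod_{j\notin\sJ}n_j\}$ this is exactly \Cref{prop:bound-strong-two-orthogonal}: a strong two-orthogonal decomposition is an honest rank-$r$ matrix factorization of a flattening, so a flattening already forces the border rank up to $r$. For $m<r\le r_g$ I would use the construction of \Cref{lemma:construction_maximal}: choose a set $\sI$ of $r$ multi-indices, pairwise differing in at least two coordinates (a subarray of the graph of a Latin hypercube works for any $r\le N$, and $r_g\le N$), form $\T=\sum_{\bi\in\sI}\lambda_{\bi}\,e_{i_1}\otimes\cdots\otimes e_{i_d}$ with generic coefficients, and then apply a generic orthogonal change of basis in each factor to land at a general point of the relevant component. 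The remaining task is to prove $\underline{\rank}(\T)\ge r$. Ordinary flattenings give only $\underline{\rank}(\T)\ge\max_k|\{i_k:\bi\in\sI\}|\le\max_k n_k$, far too weak once $r$ is large, so I would instead use Koszul (Young) flattenings in the sense of Landsberg--Ottaviani, and the equations of higher secant varieties, exploiting the sparse combinatorial support of $\T$ to evaluate the ranks of these maps explicitly; for small formats one can just check $W_r\not\subseteq\sigma_{r-1}(X)$ by direct computation.

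The main obstacle is that these border-rank lower bounds do not reach $r_g$ in general. Already for $(\R^n)^{\otimes 3}$, Koszul flattenings plateau around $2n$ while $r_g\sim n^2/3$, so the range $m<r\le r_g$ is not covered by off-the-shelf methods; a pure dimension count also fails, since the inequality $\dim W_r>\dim\sigma_{r-1}(X)$ breaks down once $r$ has order $n$ for $d\ge3$, unless some component of $W_r$ has unexpectedly large dimension. Closing the gap would likely require either a lower-bound technique adapted to the orthogonality structure — say a substitution-method argument tailored to basis-aligned decompositions — or an explicit family of representatives of $W_r$ whose border rank is exactly computable. A realistic intermediate target is the conjecture for $d=3$ and small $n$, where $r_g$ is small enough to treat $r$ case by case, together with a general bound allowing $r$ of order $n$ coming from Koszul flattenings on the Latin-hypercube tensors; I would leave the full statement up to $r_g$, as the authors do, as a conjecture.
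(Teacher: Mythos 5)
This statement is labeled as a \emph{Conjecture} in the paper and is left open; there is no proof to compare your proposal against. The authors offer only partial results (\Cref{prop:rankWr}, \Cref{prop:bound-strong-two-orthogonal}) and a handful of hand-verified examples. You are right to conclude by leaving the full statement as a conjecture, and your reduction is a sensible route of attack, but two points deserve flagging.

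First, your claimed ``equivalence'' is an overstatement. The implication you actually use is correct: if $W_r\not\subseteq\sigma_{r-1}(X)$, then by semicontinuity of border rank a general point of some irreducible component has border rank at least $r$, and since it also carries a two-orthogonal decomposition of length at most $r$ its rank (hence border rank) is exactly $r$. The converse is not: the conjecture asserts generic \emph{rank} $r$, not generic border rank $r$, and a two-orthogonal tensor of two-orthogonal rank $r$ can lie inside $\sigma_{r-1}(X)$ --- the paper's own \Cref{ex:rank222} gives a two-orthogonal tensor of rank $3$ and border rank $2$. So $W_r\not\subseteq\sigma_{r-1}(X)$ is a sufficient condition, not an equivalent reformulation, and you should state it only as such; otherwise the ``equivalence'' invites an attempt to disprove the conjecture by showing $W_r\subseteq\sigma_{r-1}(X)$, which would not actually refute it.

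Second --- and this is the genuine gap, which you correctly identify --- the reduction places the entire burden on a border-rank lower bound for a Latin-hypercube-supported tensor in the range $m<r\le r_g$, and there is no available technique for this. Ordinary flattenings of such a tensor top out at $\max_k n_k$, Koszul/Young flattenings plateau well below $r_g$, and the dimension comparison $\dim W_r>\dim\sigma_{r-1}(X)$ fails once $r$ grows. The paper's examples (e.g.\ \Cref{ex:rank-proved}) are settled by ad hoc substitution/slice arguments tailored to each case, which do not generalize. So the proposal correctly reorganizes the problem but does not advance beyond what the paper already establishes; the conjecture remains open, as you acknowledge.
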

The following examples provide evidence for this conjecture. 
One can compute the rank of the tensors by applying the technique used in \cite{Wang_2023}, which is illustrated in \Cref{ex:rank-proved}.

\begin{Example} \label{ex:rank222}
    The two-orthogonal tensor
    \[
    \T = e_1 \otimes e_1 \otimes e_2 + e_1 \otimes e_2 \otimes e_1 + e_2 \otimes e_1 \otimes e_1 \in W_3 \subset (\R^2)^{\otimes 3}
    \]
    has rank 3, which is the maximal rank in $(\R^2)^{\otimes 3}$. This example can also be embedded in~$\CC^2 \otimes \CC^3 \otimes \CC^3$ and $\CC^2 \otimes \CC^2 \otimes \CC^3$, where the generic rank is 3 \cite{landsberg2011tensors}. However, note that $\T = \lim_{\epsilon \to 0} \frac{1}{\epsilon} \left((e_1 + \epsilon e_2)^{\otimes 3} - e_1^{\otimes 3}\right)$, so its border rank is two.
\end{Example}

\begin{Example}
    The two-orthogonal tensor 
    \[
    \T = e_1 \otimes e_1 \otimes e_1 \otimes e_1 + e_1 \otimes e_2 \otimes e_1 \otimes e_2 + e_2 \otimes e_1 \otimes e_2 \otimes e_1 + e_2 \otimes e_2 \otimes e_2 \otimes e_2 \in (\R^2)^{\otimes 4}
    \]
    has rank $4$, which is the generic rank in $(\CC^2)^{\otimes 4}$ \cite{gimigliano2011secant}. Looking at the flattenings, we see that the border rank of $\T$ is also $4$. Actually, this decomposition is strong two-orthogonal.
\end{Example}

\begin{Example}
    The two-orthogonal tensor
    \[
    \T = e_1 \otimes e_1 \otimes e_1 + e_1 \otimes e_2 \otimes e_2 + e_1 \otimes e_3 \otimes e_3 + e_2 \otimes e_1 \otimes e_2 + e_3 \otimes e_1 \otimes e_3 \in (\R^3)^{\otimes 3}
    \]
    has rank $5$, which is the generic rank in $(\CC^3)^{\otimes 3}$ \cite{STRASSEN1983645}. The maximal real rank of $3 \times 3 \times 3$ tensors is also $5$ \cite{BREMNER2013401}.
\end{Example}

\begin{Example} \label{ex:rank-proved}
    The two-orthogonal tensor
    \[
    \T = e_1\otimes e_1 \otimes e_1 + e_1 \otimes e_2 \otimes e_2 + e_1 \otimes e_3 \otimes e_3 + e_1 \otimes e_4 \otimes e_4 + e_2 \otimes e_1 \otimes e_2 + e_3 \otimes e_1 \otimes e_3 + e_4 \otimes e_1 \otimes e_4 \in (\R^4)^{\otimes 4}
    \]
    has rank $7$, which is the generic rank in $(\CC^4)^{\otimes 3}$ \cite{LICKTEIG198595}, as follows.
    
    Let $\mathcal{L} = \mathrm{span}\{\T(e_1, \cdot, \cdot), \T(e_2, \cdot, \cdot), \T(e_3, \cdot, \cdot), \T(e_4, \cdot, \cdot)\}$ be the linear space spanned by the slices of $T$ obtained by fixing the first factor.
    The rank of $\T$ is the minimal number of rank-one matrices whose linear span contains $\mathcal{L}$ (e.g. see \cite[Proposition 3.3]{Wang_2023}). The two-orthogonal decomposition above has seven summands, so $\rank(\T) \leq 7$. Suppose that~$\rank(\T) \leq 6$, meaning that there exist six rank-one matrices spanning a space $\sK$ that contains $\sL$. Then, $\sK$ is spanned by $\sL$ along with two rank-one matrices, so every element of~$\sK$ is of the form
    \begin{equation*}\label{eq:span-matrices}
    \begin{pmatrix}
    a_1 & a_2 & a_3 & a_4 \\
    0 & a_1 & 0 & 0 \\
    0 & 0 & a_1 & 0 \\
    0 & 0 & 0 & a_1 \\
    \end{pmatrix} +
    a_5
    \begin{pmatrix}
        x_{11} \\ x_{12} \\ x_{13} \\ x_{14}
    \end{pmatrix} \otimes
    \begin{pmatrix}
        x_{21} \\ x_{22} \\ x_{23} \\ x_{24}
    \end{pmatrix} +
    a_6
    \begin{pmatrix}
        y_{11} \\ y_{12} \\ y_{13} \\ y_{14}
    \end{pmatrix} \otimes
    \begin{pmatrix}
        y_{21} \\ y_{22} \\ y_{23} \\ y_{24}
    \end{pmatrix}
    \end{equation*}
    for fixed $\{x_{ij}\}, \{y_{ij}\}$ and variable coefficients $\{a_i\}$. Computing the $2 \times 2$ minors of these matrices we get that all the rank-one matrices in $\sK$ have $a_1 = 0$, and such matrices do not span $\sL$. Hence, $\rank(\T) = 7$.
\end{Example}

Computing the tensor rank is NP-hard \cite{Has90}. This means that, although a generic tensor has generic rank, it is computationally infeasible to verify whether or not the rank of an arbitrary tensor is smaller, equal, or bigger than the generic rank. We hope that this may be more feasible for two-orthogonal tensors. Perhaps two-orthogonal tensors may be good candidates for tensors with provably high rank, even above the generic rank. Note that the expected generic rank of a tensor of $(\CC^n)^{\otimes d}$ is $\lceil \frac{n^d}{d(n-1) + 1} \rceil$, which is smaller than the maximal two-orthogonal rank in $(\R^n)^{\otimes d}$. 

\subsection{Optimal truncations.}\label{sec:optimal-truncations} When the two-orthogonal rank and the usual rank of a tensor coincide, the two-orthogonal decomposition has good truncation properties. The \emph{$k$-th secant variety} of $X$, denoted $\sigma_k(X)$, is the closure of the set of rank-at-most-$k$ tensors:
\[
\sigma_k(X)=\overline{\bigcup_{x_1,\dots,x_k\in X} \mathrm{span} \{x_1,\dots,x_k\}}.
\]
Consider a rank-$r$ tensor~$\T$ and let $k < r$. A \emph{critical rank-$k$ approximation} of $\T$ is a tensor $\mathcal{S}=x_1+\dots+ x_k\in \sigma_k(X)_{\text{smooth}}$ such that $\T-\mathcal{S}\perp T_{\mathcal{S}} \sigma_k(X)$.

\begin{Proposition}
    Consider a two-orthogonal decomposition $\T = \sum_{i=1}^r x_i \in W_r$. Suppose that $\rank(\T) =r$ and that the truncation $\mathcal{S} = \sum_{i=1}^k x_i$, where $k < r$, is sufficiently general as a point in $\sigma_k(X)$. Then, $\mathcal{S}$ is a critical rank-$k$ approximation of $\T$. 
\end{Proposition}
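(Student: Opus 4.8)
The plan is to verify directly the defining condition of a critical rank-$k$ approximation, $\T - \mathcal{S} \perp T_{\mathcal{S}}\sigma_k(X)$, by first pinning down the tangent space and then invoking two-orthogonality. First I would observe that, because $\rank(\T)=r$, the truncation $\mathcal{S} = \sum_{i=1}^k x^{(i)}$ necessarily has rank exactly $k$ --- if it had a shorter rank decomposition, splicing it with $\sum_{i=k+1}^r x^{(i)}$ would express $\T$ with fewer than $r$ rank-one terms --- so $\mathcal{S}$ is a genuine point of $\sigma_k(X)\setminus\sigma_{k-1}(X)$, and, being sufficiently general, lies in its smooth locus. I would then apply Terracini's lemma to the decomposition $\mathcal{S}=\sum_{i=1}^k x^{(i)}$ to get
\[
T_{\mathcal{S}}\sigma_k(X) = T_{x^{(1)}}X + \cdots + T_{x^{(k)}}X .
\]
Here the containment $\supseteq$ is automatic, since the summation parametrization $X^{k}\to V$ has image in $\sigma_k(X)$ and its differential at $(x^{(1)},\dots,x^{(k)})$ has image $\sum_i T_{x^{(i)}}X$; equality follows because this parametrization is smooth over a dense open subset of $\sigma_k(X)$, so over a sufficiently general target point the differential is surjective onto the whole tangent space, for every rank-one decomposition of that point.

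Next I would check that the residual $\T-\mathcal{S} = \sum_{i=k+1}^r x^{(i)}$ is orthogonal to each $T_{x^{(j)}}X$ with $j\in[k]$. This is exactly the content of two-orthogonality already extracted in the proof of \Cref{thm:non_order_dep}: for rank-one tensors, $x^{(i)}$ and $x^{(j)}$ being orthogonal in at least two factors is equivalent to $x^{(i)}\in N_{x^{(j)}}X = (T_{x^{(j)}}X)^\perp$, because
\[
\langle x^{(i)},\, x^{(j)}_1\otimes\cdots\otimes v_k\otimes\cdots\otimes x^{(j)}_d\rangle = \langle x^{(i)}_k,v_k\rangle_k \prod_{l\neq k}\langle x^{(i)}_l,x^{(j)}_l\rangle_l
\]
vanishes for all $k\in[d]$ and all $v_k\in\R^{n_k}$ precisely when two of the factorwise inner products vanish. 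Since $\T=\sum_{i=1}^r x^{(i)}$ is two-orthogonal, $x^{(i)}\in N_{x^{(j)}}X$ for all $i>k$ and all $j\le k$, hence $\T-\mathcal{S}\in\bigcap_{j=1}^k N_{x^{(j)}}X$. Combining this with the tangent space formula yields $\T-\mathcal{S}\in\bigl(\sum_{j\le k} T_{x^{(j)}}X\bigr)^\perp = (T_{\mathcal{S}}\sigma_k(X))^\perp$, which together with $\mathcal{S}\in\sigma_k(X)_{\mathrm{smooth}}$ is precisely the assertion.

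The step I expect to need the most care is the Terracini identification of $T_{\mathcal{S}}\sigma_k(X)$: the truncation $\mathcal{S}$ carries the specific decomposition inherited from the combinatorially rigid two-orthogonal decomposition of $\T$, and one must be sure that \emph{this} decomposition --- not merely some other rank-$k$ decomposition of $\mathcal{S}$ --- realizes the tangent space. This is where both hypotheses enter: $\rank(\T)=r$ forces $\mathcal{S}$ to be an honest rank-$k$ tensor, and ``sufficiently general as a point in $\sigma_k(X)$'' places $\mathcal{S}$ in the dense open locus over which the secant parametrization is smooth, so that every rank-one decomposition of $\mathcal{S}$ --- ours included --- has the Terracini property. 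Everything else is the elementary bookkeeping already contained in \Cref{prop:tangent-space} and the proof of \Cref{thm:non_order_dep}.
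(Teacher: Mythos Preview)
Your proposal is correct and follows essentially the same approach as the paper: interpret ``sufficiently general'' as the Terracini identity $T_{\mathcal{S}}\sigma_k(X)=\sum_{i\le k}T_{x^{(i)}}X$, then use two-orthogonality to get $x^{(j)}\in N_{x^{(i)}}X$ for $j>k$ and $i\le k$, hence $\T-\mathcal{S}\perp T_{\mathcal{S}}\sigma_k(X)$. You supply more justification (the rank-$k$ argument for $\mathcal{S}$ and the explicit normal-space computation), but the underlying argument is the same as the paper's.
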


\begin{proof}
     Suppose that $\mathcal{S}$ is sufficiently general in the sense that it satisfies Terracini's Lemma; i.e. we have $T_{\mathcal{S}}\sigma_k(X) = \mathrm{span}\{ T_{x_1}X,\dots, T_{x_k}X\}$. Recall that the tangent space to $X$ at $x$ is~$T_xX=\sum_{l=1}^d x^{(1)}\otimes\cdots\otimes x^{(l-1)} \otimes \R^{n_l}\otimes x^{(l+1)} \otimes \cdots \otimes x^{(d)}$.
     Two-orthogonality implies $x_j\perp T_{x_i}X$ for all $j>k$ and $i\leq k$. Hence, $\mathcal{S}$ is a critical rank-$k$ approximation of $\T$.
\end{proof}

The previous result suggests that if we order the summands such that $\| x_1\| \geq \cdots \geq \| x_r\|$, then the truncation $\sS = \sum_{i=1}^k x_i$ is a good candidate for a best rank-$k$ approximation of $\T = \sum_{i=1}^r x_i$. Following \cite{vannieuwenhoven2014generic}, an \emph{SEY decomposition} is a tensor decomposition $\T = \sum_{i=1}^r x_i$ such that for all $k < r$, retaining the first $k$ summands gives an optimal solution in the sense that $\sum_{i=1}^k x_i$ is a minimizer of $\|\T - \sS \|$ over all $\sS$ of rank at most $k$.

Two-orthogonality is necessary for having an SEY decomposition \cite[Theorem 3.3]{vannieuwenhoven2014generic}, but it is insufficient because a tensor $\T$ cannot have an SEY decomposition if its border rank and rank disagree (\Cref{ex:rank222}).
Taking flattenings, the singular value decomposition shows that strong two-orthogonality is sufficient to be an SEY decomposition \cite[Theorem 3.6]{vannieuwenhoven2014generic}. The authors in \cite{vannieuwenhoven2014generic} state the following: ``It remains an open question whether strong two-orthogonality is also necessary''. The following counterexample shows that it is not necessary.

\begin{Example}
    Consider the two-orthogonal decomposition
    \begin{equation}\label{eq:SEY}
        \T = 3\,e_1 \otimes e_1 \otimes e_1 + e_2 \otimes e_2 \otimes \left(\frac{1}{2}\,e_1 + 2\,e_2 \right) + e_1 \otimes e_2 \otimes e_3 \in \R^2 \otimes \R^2 \otimes \R^3
    \end{equation}
    We claim that this is an SEY decomposition, despite not being strong two-orthogonal (there is no factor for which the three summands are orthogonal). The border rank of $\T$ is three, e.g. by looking at its flattenings. Critical rank-one approximations correspond to singular vector tuples, by \Cref{prop:tangent-space}. If $x$ is a singular vector tuple of $\T$ with singular value $\lambda$, then $\| \T - \lambda x\|^2 = \| T\|^2 - \lambda^2$. The tensor $\T$ has eight singular vector tuples, which is the generic number for a tensor of this format \cite{FO14}. The singular vector tuple $e_1 \otimes e_1 \otimes e_1$ has singular value $3$, and one can check that all the other singular vector tuples have singular values smaller than $3$. Hence, $3 \, e_1 \otimes e_1 \otimes e_1$ is the best rank-one approximation of $\T$.
    
    The tensor $\T$ has three critical rank-two approximations (which can be computed, for example, using the notion of critical ideal defined in \cite{draisma2016euclidean}). One of them comes from the first two summands in our decomposition: $\sS = 3\,e_1 \otimes e_1 \otimes e_1 + e_2 \otimes e_2 \otimes (1/2\, e_1 + 2\,e_2)$, which leads to a residual $\| \T - \sS\|^2 = 1$. The other two are the tensors
    \[
    \left(\begin{array}{cc||cc||cc}
         \frac{120 - 63\alpha}{38}& 0 & \frac{12 - 12\alpha}{19} & 0 & 0 & 1 \\
         0 & \frac{72 - 53\alpha}{76} & 0 & \alpha & 0 & 0
    \end{array}\right) \in \R^2 \otimes \R^2 \otimes \R^3
    \]
    where $505 \, \alpha^2 - 1010 \, \alpha + 144 = 0$. The rank of these tensors is greater than two, but their border rank is two.
    One can check that both solutions lead to a residual greater than one. Hence, $\sS$ is the best rank-two approximation of $\T$, so \eqref{eq:SEY} is an SEY decomposition.
\end{Example}

\subsection{Two-orthogonality for partially symmetric tensors}
We study two-orthogonal decompositions for partially symmetric tensors. A tensor $\T \in (\R^n)^{\otimes d}$ is symmetric if $\T_{i_1, \dots, i_d} = \T_{\pi(i_1), \dots, \pi(i_d)}$ for any permutation $\pi \in S_d$.
For $d=2$, this says $M = M^\top$. Let~$S^d(\R^n)$ denote the space of symmetric tensors of format $n \times \cdots \times n$ ($d$ times), which is naturally identified with the space of homogeneous polynomials in $n$ variables of degree $d$ \cite{comon2008symmetric}. Using this identification, we express the rank-one tensors in $S^d(\R^n)$ as $\ell^d$ for $\ell \in (\R^n)^\ast$. 

Let $V = S^{d_1} \R^{n_1} \otimes \cdots \otimes S^{d_p} \R^{n_p}$ and let $X  = \{ \ell_1^{d_1} \otimes \cdots \otimes \ell_p^{d_p} \mid \ell_i \in (\R^{n_i})^\ast \} \subset V$ be the cone over the Segre-Veronese variety, the set of rank-one tensors in $V$. An inner product $\langle \cdot, \cdot \rangle$ in $(\R^n)^\ast$ gives a unique inner product on $S^d\R^n$ defined on two rank-one tensors as $\langle f^d, g^d \rangle = \langle f, g\rangle^d$ and extended to all $S^d\R^n$ by linearity. This implies that $\langle f_1 \cdots f_d, g_1 \cdots g_d\rangle = \frac{1}{d!}\sum_{\pi \in S_d} \prod_{k=1}^d \langle f_k, g_{\pi(k)}\rangle$. As before, a set of inner products in $(\R^{n_1})^\ast, \dots, (\R^{n_p})^\ast$ defines an inner product in $V$.

\begin{Definition}
    A partially symmetric tensor $\T \in  S^{d_1} \R^{n_1} \otimes \cdots \otimes S^{d_p} \R^{n_p}$ is called \emph{two-orthogonal} if it admits a decomposition $\T = \sum_{i=1}^r x_i$ where $x_i = \ell_{1,i}^{d_1} \otimes \cdots \otimes \ell_{p,i}^{d_p} \neq 0$ and all pairs of summands $x_i$ and $x_j$ with $i \neq j$ are orthogonal in at least two factors, counting multiplicities $d_k$. We define $W_r$ as the closure of the set of length-at-most-$r$ two-orthogonal partially symmetric tensors, and $W = \overline{\bigcup_r W_r}$ as the \emph{two-orthogonal variety}.
\end{Definition}

\begin{Example}
    The tensor $f_1^2 \otimes g_1 + f_2^2 \otimes g_2 \in S^2\R^m \otimes \R^n$ with $f_1 \perp f_2$ is two-orthogonal.
\end{Example}
\begin{Proposition} \label{prop:tangent-partially-symmetric}
Let $x = \ell_1^{d_1} \otimes \cdots \otimes \ell_p^{d_p} \in X\setminus\{ 0\} \subseteq V$.
\begin{enumerate}
    \item The tangent space to $X$ at $x$ is $T_xX = \left\{ \sum\limits_{k=1}^d \ell_{1}^{d_1} \otimes \cdots \otimes \ell_k^{d_k - 1} \ell \otimes \cdots \otimes \ell_p^{d_p} \mid \ell \in (\R^{n_k})^\ast \right\}$.
\item If $d_k \geq 2$ for all $k$, then $N_xX \cap X = \left\{ y_1^{d_1} \otimes \cdots \otimes y_p^{d_p} \mid  \langle \ell_k, y_k\rangle = 0, \text{ for some $k$}\right\}$.
\end{enumerate}
\end{Proposition}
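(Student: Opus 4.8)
The plan is to follow the proof of \Cref{prop:tangent-space}, adding the bookkeeping needed for the symmetric powers.

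For part (1), I would differentiate the parametrization $\phi\colon(\ell_1,\dots,\ell_p)\mapsto\ell_1^{d_1}\otimes\cdots\otimes\ell_p^{d_p}$ of $X$. The tangent space $T_xX$ is spanned by the partial derivatives of $\phi$ at $(\ell_1,\dots,\ell_p)$. Differentiating in the $k$-th slot along a direction $\ell\in\R^{n_k}$, the identity $\frac{d}{dt}\big|_{t=0}(\ell_k+t\ell)^{d_k}=d_k\,\ell_k^{d_k-1}\ell$ shows that the directional derivative is $d_k\,\ell_1^{d_1}\otimes\cdots\otimes\ell_k^{d_k-1}\ell\otimes\cdots\otimes\ell_p^{d_p}$. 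The nonzero scalar $d_k$ does not affect the span, so summing over the $p$ factors and over all directions yields the claimed formula for $T_xX$.

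For part (2), set $P_k=\{\ell_1^{d_1}\otimes\cdots\otimes\ell_k^{d_k-1}\ell\otimes\cdots\otimes\ell_p^{d_p}\mid\ell\in\R^{n_k}\}$, which is a linear subspace (the image of a linear map in $\ell$), so that $T_xX=\sum_{k=1}^p P_k$ by part (1); hence $y\in N_xX$ iff $y\perp P_k$ for every $k$. The crux is to compute, for $y=y_1^{d_1}\otimes\cdots\otimes y_p^{d_p}\in X$, the pairing
\[
\big\langle y,\ \ell_1^{d_1}\otimes\cdots\otimes\ell_k^{d_k-1}\ell\otimes\cdots\otimes\ell_p^{d_p}\big\rangle=\Big(\prod_{m\neq k}\langle\ell_m,y_m\rangle^{d_m}\Big)\langle\ell_k,y_k\rangle^{d_k-1}\langle\ell,y_k\rangle ,
\]
which follows from multiplicativity of the inner product across the $p$ factors, the rule $\langle f^d,g^d\rangle=\langle f,g\rangle^d$, and the polarization identity $\langle f_1\cdots f_d,g_1\cdots g_d\rangle=\frac{1}{d!}\sum_{\pi\in S_d}\prod_j\langle f_j,g_{\pi(j)}\rangle$ applied in the $k$-th factor with all $g_j$ equal to $y_k$ (every term of the symmetrization then contributes $\langle\ell_k,y_k\rangle^{d_k-1}\langle\ell,y_k\rangle$, so the $\frac{1}{d_k!}$ cancels). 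Given this identity, the inclusion $\supseteq$ is immediate: if $\langle\ell_k,y_k\rangle=0$ for some $k$, then the slot-$k$ pairing vanishes because $d_k\geq2$ makes $\langle\ell_k,y_k\rangle^{d_k-1}=0$, and every slot-$k'$ pairing with $k'\neq k$ vanishes because the product $\prod_{m\neq k'}$ contains $\langle\ell_k,y_k\rangle^{d_k}=0$; so $y\in N_xX\cap X$. For $\subseteq$, suppose $y=y_1^{d_1}\otimes\cdots\otimes y_p^{d_p}\in N_xX\cap X$ with $\langle\ell_k,y_k\rangle\neq0$ for all $k$; then each $y_k\neq0$ and, for each fixed $k$, the scalar multiplying $\langle\ell,y_k\rangle$ above is nonzero, so $y\perp P_k$ forces $\langle\ell,y_k\rangle=0$ for all $\ell\in\R^{n_k}$, hence $y_k=0$ by nondegeneracy — a contradiction. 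Thus $\langle\ell_k,y_k\rangle=0$ for some $k$.

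I expect the only genuinely delicate step to be the inner-product computation in the symmetric factors, i.e. correctly evaluating $\langle\ell_k^{d_k-1}\ell,\,y_k^{d_k}\rangle$ via polarization; everything else is formal. It is worth emphasizing where the hypothesis $d_k\geq2$ enters: it is exactly what forces $\langle\ell_k,y_k\rangle^{d_k-1}=0$ in the $\supseteq$ direction, which is why — in contrast to the non-symmetric Segre case, where two orthogonalities are needed (cf. the proof of \Cref{thm:non_order_dep}) — a single orthogonality $\langle\ell_k,y_k\rangle=0$ already suffices once each symmetric exponent is at least two.
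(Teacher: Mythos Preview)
Your proposal is correct and follows essentially the same approach as the paper: differentiate the parametrization for part (1), and for part (2) compute the pairing $\langle y,\ell_1^{d_1}\otimes\cdots\otimes\ell_k^{d_k-1}\ell\otimes\cdots\otimes\ell_p^{d_p}\rangle$ and read off the orthogonality condition. Your version is in fact more complete than the paper's terse argument, since you explicitly verify both inclusions in (2), justify the symmetric inner-product computation via polarization, and pinpoint exactly where the hypothesis $d_k\geq 2$ is used.
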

\begin{proof}
    Part (1) follows by applying the Leibniz rule to the parametrization of $X$. For part~(2), suppose that $y=y_1^{d_1} \otimes \cdots \otimes y_1^{d_1}\in N_xX$, then 
    \[
    \langle y , \ell_{1}^{d_1} \otimes \cdots \otimes \ell_k^{d_k - 1} \ell \otimes \cdots \otimes \ell_p^{d_p}\rangle=\langle y_k,\ell_k\rangle^{d-1}\langle y_k,\ell\rangle\prod_{j\neq k}\langle y_j,\ell_j\rangle^{d_j}=  0 
    \]
    for all $k\in [p]$ and all $ \ell \in (\R^{n_k})^\ast$. So there exists $k\in[p]$ such that $\langle y_k,\ell_k\rangle=0$.
\end{proof}

\Cref{prop:tangent-partially-symmetric} implies that \Cref{thm:non_order_dep} also holds for partially symmetric tensors. \Cref{thm:maximal-partially-symmetric} is an extension of \Cref{thm:maximal_length} to partially symmetric tensors. The proof is similar; we include it for completeness.
\begin{Theorem} \label{thm:maximal-partially-symmetric}
     Let $V = S^{d_1} \R^{m_1} \otimes \cdots \otimes S^{d_p} \R^{m_p} \otimes \R^{n_1} \otimes \cdots \otimes \R^{n_q}$, $n_1 \leq \cdots \leq n_q$ and $d_k \geq 2$ for all $k \in [p]$. Then, the maximal length of a two-orthogonal decomposition is $N = m_1 \cdots m_p n_1 \cdots n_{q-1}$. In particular, $W=W_N$.
\end{Theorem}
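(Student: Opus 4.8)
The plan is to follow the two-part structure of \Cref{thm:maximal_length} --- first an upper bound on the length, then a matching explicit construction --- while taking care of the ``counting multiplicities $d_k$'' convention on the symmetric factors. For the upper bound I would fix a two-orthogonal decomposition $\T = \sum_{i=1}^{r} x^{(i)}$ with $x^{(i)} = \ell_{1,i}^{d_1} \otimes \cdots \otimes \ell_{p,i}^{d_p} \otimes v_{1,i} \otimes \cdots \otimes v_{q,i} \neq 0$ and pass to the \emph{linearized, truncated} tensors $\hat{x}^{(i)} = \ell_{1,i} \otimes \cdots \otimes \ell_{p,i} \otimes v_{1,i} \otimes \cdots \otimes v_{q-1,i}$, which live in $\hat V := \R^{m_1} \otimes \cdots \otimes \R^{m_p} \otimes \R^{n_1} \otimes \cdots \otimes \R^{n_{q-1}}$; that is, I replace each symmetric block $\ell_{k,i}^{d_k}$ by a single copy $\ell_{k,i}$ and delete the last (largest) factor $\R^{n_q}$. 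Since each $d_k \geq 2$, a pair of summands orthogonal in one symmetric factor already exhausts the ``two factors'' budget by itself, so after deleting the single factor $\R^{n_q}$ every pair $x^{(i)}, x^{(j)}$ with $i \neq j$ still has at least one surviving factor of orthogonality; and because $\langle \ell^{d_k}, (\ell')^{d_k}\rangle = \langle \ell, \ell'\rangle^{d_k}$, orthogonality in a symmetric factor passes to orthogonality of the linearized vectors. Hence the $\hat{x}^{(i)}$ are nonzero and pairwise orthogonal, so they are linearly independent (take inner products with $\hat{x}^{(i)}$, exactly as in \Cref{lemma:stabilization}), which gives $r \leq \dim \hat V = m_1 \cdots m_p\, n_1 \cdots n_{q-1} = N$.

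For the lower bound I would mimic \Cref{lemma:construction_maximal}: index summands by $\alpha = (a_1,\dots,a_p,b_1,\dots,b_{q-1}) \in [m_1] \times \cdots \times [m_p] \times [n_1] \times \cdots \times [n_{q-1}]$ and set
\[
x^{(\alpha)} = e_{a_1}^{d_1} \otimes \cdots \otimes e_{a_p}^{d_p} \otimes e_{b_1} \otimes \cdots \otimes e_{b_{q-1}} \otimes e_{L(b_1,\dots,b_{q-1})},
\]
where $L$ takes values in $[n_q]$ and is the restriction to $[n_1] \times \cdots \times [n_{q-1}]$ of a Latin hypercube indexed by $[n_q]^{q-1}$ (possible because $n_j \leq n_q$). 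For two distinct tuples $\alpha \neq \alpha'$: if they differ in a symmetric coordinate $a_k$, the factor $S^{d_k}\R^{m_k}$ alone contributes $d_k \geq 2$ orthogonalities; if they agree on all $a_k$ but differ in at least two of the $b_j$, those two $\R^{n_j}$-factors contribute two; and if they differ in exactly one $b_j$, the Latin-hypercube property forces $L(\alpha) \neq L(\alpha')$, so the $\R^{n_j}$ and $\R^{n_q}$ factors together contribute two. Thus every pair is orthogonal in at least two factors counting multiplicities, the $N$ nonzero summands (which may be rescaled arbitrarily) form a two-orthogonal decomposition of length $N$, and combining with the upper bound gives $\bigcup_r W_r = W_N$, so $W = W_N$.

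The step I expect to be the crux --- and where one must resist the naive analogue of \Cref{thm:maximal_length} --- is the upper bound: one cannot simply delete a symmetric factor, since two summands may be orthogonal in that factor only, so deleting it destroys two-orthogonality; this is precisely why the symmetric factors are \emph{linearized} rather than dropped. The bound $\dim\hat V = m_1\cdots m_p\, n_1\cdots n_{q-1}$, rather than the much larger $\prod_k \binom{m_k+d_k-1}{d_k}\, n_1\cdots n_{q-1}$ that one would get by keeping the Veronese blocks, is exactly what makes the construction tight. Everything else --- checking that linearization preserves orthogonality and nonvanishing, and the short case analysis in the construction --- is routine and parallels the proofs of \Cref{lemma:stabilization} and \Cref{lemma:construction_maximal}.
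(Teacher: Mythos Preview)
Your proposal is correct and follows essentially the same approach as the paper: for the upper bound the paper also linearizes each symmetric block $\ell_{k,i}^{d_k}$ to a single copy $\ell_{k,i}$, deletes the largest non-symmetric factor, and uses pairwise orthogonality to bound $r$ by $\dim(\R^{m_1}\otimes\cdots\otimes\R^{m_p}\otimes\R^{n_1}\otimes\cdots\otimes\R^{n_{q-1}})$; for the lower bound the paper constructs the same tensor you do, summing over free indices in the symmetric factors and a Latin-hypercube index set in the non-symmetric ones. Your explicit case analysis verifying two-orthogonality of the construction is a bit more detailed than the paper's, but the argument is the same.
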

    
\begin{proof}
    Let $\T = \sum_{i=1}^r \ell_{1,i}^{d_1} \otimes \cdots \otimes \ell_{p,i}^{d_p} \otimes u_{1,i} \otimes \cdots \otimes u_{q,i} \in V$ be two-orthogonal. Consider the tensor $\T' = \sum_{i=1}^r \ell_{1,i} \otimes \cdots \otimes \ell_{p,i} \otimes u_{1,i} \otimes \cdots \otimes u_{{q-1},i}$. Each pair of summands of $\T'$ are orthogonal in at least one factor, so they are linearly independent. Therefore, $r \leq m_1 \cdots m_p n_1 \cdots n_{q-1}$. It remains to show that the bound can be achieved: Let $\{e_{k,1},\dots, e_{k,m_k}\}$ be an orthogonal basis of $(\R^{m_k})^\ast$,  let $\{e'_{k,1},\dots, e'_{k,n_k}\}$ be an orthogonal basis of $\R^{n_k}$, and let $\sJ \subseteq [n_1] \times \cdots \times [n_q]$ be a maximal set of indices given by a Latin hypercube, as in \Cref{lemma:construction_maximal}. Then, the tensor
    \[
    \T = \sum_{\bj \in \sJ, i_1, \cdots, i_p} e_{1,i_1}^{d_1} \otimes \cdots \otimes e_{p,i_p}^{d_p} \otimes e'_{1,j_1} \otimes \cdots \otimes e'_{d,j_d}
    \]
    is two-orthogonal and consists of $m_1 \cdots m_p n_1 \cdots n_{q-1}$ terms.
\end{proof}

\section{Dimension of the two-orthogonal variety} \label{sec:two-orthogonal-variety}

In this section, we prove \Cref{thm:dimension_lower_bound}, which lower bounds the dimension of the two-orthogonal variety. To do this, we construct a dimension-preserving map that parametrizes a set of basis-aligned two-orthogonal tensors. Recall that the \emph{Hamming distance} on $[n]^d$ is~$d_H(\bi, \bj) = |\{ k \mid i_k \neq j_k\}|$.

\begin{Lemma}\label{lemma:distance3}
Let $n\geq 2, d\geq 3$, and let $L$ be the Latin hypercube on $[n]^{d-1}$ cells given by~$L(i_1, \dots, i_{d-1}) = i_{d-1} + \sum_{k =1}^{d-2}(-1)^{k+1} i_k \mod n$. Let $\sI = \{(i_1, \dots, i_{d-1}, L(i_1, \dots, i_{d-1})) \}$. Then there exists a subset $\sJ \subset \sI$ with $|\sJ| = n-1$ and $d_H(\textnormal{\bi}, \textnormal{\bj}) \geq 3$ for all distinct $\textnormal{\bi}, \textnormal{\bj} \in \sJ$.
\end{Lemma}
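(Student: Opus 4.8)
The plan is to place $\sJ$ inside a two-dimensional slice of $\sI$, reducing the statement to a combinatorial fact about $\Z_n$. The only features of $L$ that I would use are that the variables $i_1$ and $i_{d-1}$ both occur in $L(i_1,\dots,i_{d-1}) = i_{d-1} + \sum_{k=1}^{d-2}(-1)^{k+1}i_k$ with coefficient $+1$, and that they are distinct variables because $d \geq 3$. Fixing $i_k = 0$ for every $k \in \{2,\dots,d-2\}$ (a vacuous condition when $d=3$) and setting $i_1 = a$, $i_{d-1} = b$, the resulting tuple $\bi(a,b) \in \sI$ has last coordinate equal to $a+b \bmod n$. The map $(a,b) \mapsto \bi(a,b)$ is injective, and two such tuples $\bi(a,b)$, $\bi(a',b')$ agree on the coordinates indexed by $2,\dots,d-2$, so their Hamming distance equals the number of the three conditions $a\neq a'$, $b\neq b'$, $a+b\not\equiv a'+b' \bmod n$ that hold. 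Thus it suffices to produce $n-1$ pairs $(a_t,b_t)\in[n]^2$ such that the three lists $(a_t)$, $(b_t)$ and $(a_t+b_t \bmod n)$ each consist of $n-1$ distinct elements; then $\sJ = \{\bi(a_t,b_t) : 1\leq t\leq n-1\}$ does the job.

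The second step is to exhibit such pairs, i.e.\ a partial transversal of length $n-1$ in the addition table of $\Z_n$. If $n$ is odd I would take $(a_t,b_t)=(t,t)$ for $t=1,\dots,n-1$: the sums $2t$ are pairwise distinct because $2$ is invertible modulo $n$. If $n=2m$ is even I would combine the $m$ diagonal pairs $(t,t)$ for $t=0,\dots,m-1$, whose sums $2t$ run over the even residues modulo $2m$, with the $m-1$ pairs $(m+j,\,m+j+1)$ for $j=0,\dots,m-2$, whose sums $2m+2j+1\equiv 2j+1$ run over the odd residues $1,3,\dots,2m-3$. A direct check shows the rows used form $\{0,1,\dots,2m-2\}$, the columns used form $\Z_n\setminus\{m\}$, and the sums form $\Z_n\setminus\{2m-1\}$, each of size $n-1$, while the total number of pairs is $m+(m-1)=n-1$.

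The one genuine obstacle is the even case: the addition table of $\Z_{2m}$ has no full transversal (equivalently, $\Z_{2m}$ has no complete mapping, since the sum of its elements is $m\neq 0$), so there is no hope of getting $n$ pairs, and the naive diagonal $(t,t)_{t=1}^{n-1}$ has colliding sums modulo $n$; the two-block construction above is precisely what repairs this while still reaching length $n-1$. The remaining points — injectivity of $(a,b)\mapsto\bi(a,b)$, the Hamming-distance bookkeeping, and the disjointness of the even and odd residue blocks — are routine verifications.
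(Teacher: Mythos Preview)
Your argument is correct. The reduction to a partial transversal of length $n-1$ in the Cayley table of $\Z_n$ is a clean framing and makes the proof uniform in $d$: by freezing the middle coordinates $i_2,\dots,i_{d-2}$ to a constant you only case on the parity of $n$. The paper instead places $\sJ$ on (or near) the main diagonal $i_1=\cdots=i_{d-1}=i$, which forces a preliminary split on the parity of $d$ because $L(i,\dots,i)$ equals $i$ when $d$ is even and $2i$ when $d$ is odd; only in the odd-$d$, even-$n$ subcase does it need a two-block construction, and that construction is essentially identical to your even-$n$ one. The trade-off is that the paper's tuples have Hamming distance $d$ (far above $3$), whereas yours sit at exactly $3$; since the lemma only needs $3$, your route is a bit more economical. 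One cosmetic point: the paper takes $[n]=\{1,\dots,n\}$ as the residue set, so your ``$i_k=0$'' and ``$t=0$'' should be read as some fixed element of $[n]$; any constant works and nothing in the argument changes.
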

\begin{proof}
    If $d$ is even, then $L(i,\dots, i) = i$, so we can let $\sJ = \{ (i, i, \dots, i) \mid i \in [n-1]\}$. If $d$ is odd, then $L(i,\dots, i) = 2i \mod n$, so we consider two cases. If $n$ is odd, let $\sJ = \{(i, \dots, i, 2i\mod n)  \mid i \in [n-1] \}$. If $n$ is even, let 
    \[
    \sJ = \left\{ (i, \dots, i, i, 2i) \mid 1 \leq i \leq \frac{n}{2} \right\} \cup \left\{ (i, \dots, i, i+1, 2i+1 - n) \mid \frac{n}{2} < i \leq n-1 \right\}. \qedhere
    \]
\end{proof}

\dimensionlowerbound*
\begin{proof} We parameterize a set of two-orthogonal tensors via a dimension-preserving map. This lower bounds the dimension.
Let $L$ be the Latin hypercube in \Cref{lemma:distance3}, and let $\sI = \{(i_1, \dots, i_{d-1}, L(i_1, \dots , i_{d-1})) \mid i_j \in [n] \}$. Given a set of scalars $\{ \lambda_{\bi} \mid \bi \in \sI \} \subset \R$ and an orthonormal basis $\{u^{(k)}_1, \dots, u^{(k)}_n \} \subset \R^n$ for each $k \in [d]$, we construct
\[
\T = \sum_{\bi \in \sI} \lambda_\bi u^{(1)}_{i_1} \otimes \cdots \otimes u^{(d)}_{i_d} \in W_{n^{d-1}}.
\]
There are $n^{d-1}$ degrees of freedom to choose the $\lambda_\bi$'s and, for each factor $k \in [d]$, we have~$\binom{n}{2}$ degrees of freedom to choose $\{u^{(k)}_1, \dots, u^{(k)}_n \}$. Hence, the expected dimension of the variety of tensors with such a decomposition is $n^{d-1} + d \binom{n}{2}$.

We can rescale each $u_i^{(k)}$ and scale the $\lambda_\bi$'s accordingly. Hence, in the following parametrization we fix an entry of the vectors to be one, instead of imposing that they have norm one. Let~$\theta = (\theta_{ij} \mid 1 \leq j < i \leq n) \in \R^{\binom{n}{2}}$ and consider the matrix
\begin{equation*} \label{eq:u-orthogonal}
    U(\theta) = \begin{pmatrix}
    | & & | \\ 
    u_1(\theta) & \cdots & u_{n}(\theta) \\
    | & & | \\ 
\end{pmatrix} = 
\begin{pmatrix}
    1 & p_{1,2}(\theta) & p_{1,3}(\theta) & \cdots & p_{1,n}(\theta) \\
    \theta_{2,1} & 1 & p_{2,3}(\theta) & \cdots & p_{2,n}(\theta) \\
    \theta_{3,1} & \theta_{3,2} & 1 & \cdots & p_{3,n}(\theta) \\
    \vdots & \vdots &\vdots & \ddots & \vdots\\
    \theta_{n,1} & \theta_{n,2} & \cdots & \cdots & 1 \\
\end{pmatrix}
\end{equation*}
where $p_{ij}(\theta) \in \R(\theta)$ are chosen such that the columns of $U$ are orthogonal (for simplicity in notation, we consider the Euclidean inner product throughout this proof). 
We parametrize a family of basis-aligned tensors in $W_{n^{d-1}}$ by
\[
\begin{matrix}
    \phi : \; & \R^{n^{d-1}} &\times& \left(\R^{\binom{n}{2}} \right)^d & \to & \left(\R^n\right)^{\otimes d} \\
    & (\lambda_\bi)&,& \theta^{(1)}, \dots, \theta^{(d)} & \mapsto & \sum_{\bi \in \sI} \lambda_\bi u_{i_1}(\theta^{(1)}) \otimes \cdots \otimes u_{i_d}(\theta^{(d)}).
\end{matrix}
\]
We show that the Jacobian of $\phi$ is generically full-rank. In other words, given a generic $\sS \in \text{Im}(\phi) \subseteq W_{n^{d-1}}$, we show that $J\phi : \R^{n^{d-1}} \times \left(\R^{\binom{n}{2}} \right)^d  \to  T_\sS W_{n^{d-1}}$ is injective. We study $\left.J\phi \right\rvert_{\theta^{(1)} = \cdots = \theta^{(d)} = 0}$. Orthogonality implies that $p_{ij}(0) = 0$ for all $i,j$. Let $i<j$, then
\[
     0 = \langle u_i(\theta), u_j(\theta) \rangle = \sum_{k < i } p_{ki}(\theta) p_{kj}(\theta) + p_{ij}(\theta) + \sum_{i < k < j} \theta_{ki} p_{kj}(\theta) + \theta_{ji} + \sum_{k > j} \theta_{ki}\theta_{kj}.
    \]
Taking derivatives and evaluating at $\theta = 0$, we get
\[
\left.\frac{\partial p_{ij}(\theta)}{\partial \theta_{kl}} \right\rvert_{\theta = 0} = \begin{cases}
    -1 \quad &\text{if} \quad (i,j) = (l,k) \\
    0 \quad &\text{otherwise}.
\end{cases}
\]
In what follows, we write $\Theta = 0$ to denote $\theta^{(1)} = \cdots = \theta^{(d)} = 0$. Note that $d_H(\bi,\bj) \geq 2$ for all distinct $\bi,\bj \in \sI$. Hence
\begin{align*}
    \left.\frac{\partial \phi_{\bi}}{\partial \lambda_{\bj}} \right\rvert_{\Theta = 0} &= \begin{cases}
        \delta_{\bi \bj} & \text{if} \quad \bj \in \sI \\
        0 & \text{otherwise,}
    \end{cases}\\  
    \left.\frac{\partial \phi_{\bi}}{\partial \theta^{(k)}_{\ell_1 \ell_2}} \right\rvert_{\Theta = 0} &= \begin{cases}
        \pm \lambda_\bj &\text{if $\bj \in \sI$, $d_H(\bi, \bj) = 1$, $i_k \neq j_k$, and $\{\ell_1, \ell_2 \} = \{ i_k, j_k\}$.} \\
        0 & \text{otherwise.}
    \end{cases}
\end{align*}
The sign in the last expression depends on $L$ and whether~$i_k < j_k$. Note that $\left.\frac{\partial \phi_{\bi}}{\partial \theta^{(k)}_{\ell_1 \ell_2}} \right\rvert_{\Theta = 0} = 0$, if $\bi \in \sI$ and $\left.\frac{\partial \phi_{\bi}}{\partial \lambda_{\bj}} \right\rvert_{\Theta = 0}= 0$, if $\bi \notin \sI$. Therefore, after reordering rows, $\left.J\phi \right\rvert_{\Theta = 0}$ is a block matrix. The block of partial derivatives with respect to $\{\lambda_{\bi}\}$ is a full-rank $n^{d-1}\times n^{d-1}$ matrix (a permutation matrix). The other block, corresponding to partial derivatives with respect to~$\{\theta^{(k)}_{\ell_1 \ell_2}\}$, is an $ n^{d-1}(n-1) \times d\binom{n}{2}$ matrix. Entries outside these two blocks are zero (see \Cref{ex:jacobian}). We show that $\left.J\phi \right\rvert_{\Theta = 0}$ is full-rank for generic $\lambda_\bi$'s.
Let $\sJ$ be as in \Cref{lemma:distance3} and consider the following sub-matrix of $\left.J\phi \right\rvert_{\Theta = 0}$ where we set $\lambda_{\bi}$ to zero if $\bi \notin \sJ$:
\[
    M = \left( \left.\frac{\partial \phi_{\bi}}{\partial \theta^{(k)}_{\ell_1 \ell_2}} \right\rvert_{\Theta = 0, \; \lambda_\bi = 0 \text{ if } \bi \notin \sJ}\right)_{\bi, (k, \ell_1, \ell_2)} \in \R^{n^d \times d \binom{n}{2}}.
\]
Since $|\sJ| = (n-1)$, for every $k \in [d]$ and every $ \ell_2 < \ell_1 \in [n]$, there is some $\bj \in \sJ$ such that~$j_k = \ell_1$ or $j_k = \ell_2$, so every column of $M$ is nonzero. Moreover, we have $d_H(\bj_1, \bj_2) \geq 3$ for all distinct $\bj_1, \bj_2 \in \sJ$, so there is no $\bi \in [n]^d$ with $d_H(\bi, \bj_1) = d_H(\bi, \bj_2) = 1$. Hence, every row of $M$ has at most one nonzero entry. Therefore, $M$ is full-rank as long as $\lambda_\bi \neq 0$ for~$\bi \in \sJ$. By continuity of the determinant, $\left.J\phi \right\rvert_{\Theta = 0}$ is full-rank for generic $\lambda_\bi$'s.
\end{proof}

\begin{Example}\label{ex:jacobian}
    The Jacobian of $\phi$ in the proof of \Cref{thm:dimension_lower_bound} is given below for $(\R^2)^{\otimes 3}$:
\[
\left.J\phi \right\rvert_{\Theta = 0} = 
\begin{blockarray}{cccccccccc}
& & \lambda_{112} & \lambda_{121} & \lambda_{211} & \lambda_{222} & \theta_{21}^{(1)} & \theta_{21}^{(2)} & \theta_{21}^{(3)} &\\[1em]
\begin{block}{cc(ccccccc)c}
  112 & & 1 &  &  &  &  & & &\\
  121 & &  & 1 &  &  &  & & &\\
  211 & &  &  & 1 &  &  & & &\\
  222 & &  &  &  & 1 &  & & &.\\
  111 & &  &  &  &  & -\lambda_{211} & -\lambda_{121} & -\lambda_{112} &\\
  122 & &  &  &  &  & -\lambda_{222} & \lambda_{112} & \lambda_{121} &\\
  212 & &  &  &  &  & \lambda_{112} & -\lambda_{222} & \lambda_{211} &\\
  221 & &  &  &  &  & \lambda_{121} & \lambda_{211} & -\lambda_{222}&\\
\end{block}
\end{blockarray}
 \]
 This matrix is full-rank for generic $\lambda_\bi \in \R$. Hence, the two-orthogonal decompositions
 \[
 \lambda_{112} u_1^{(1)} \otimes u_1^{(2)} \otimes u_2^{(3)} + \lambda_{121} u_1^{(1)} \otimes u_2^{(2)} \otimes u_1^{(3)} + \lambda_{211} u_2^{(1)} \otimes u_1^{(2)} \otimes u_1^{(3)} + \lambda_{222} u_2^{(1)} \otimes u_2^{(2)} \otimes u_2^{(3)},
 \]
 where $u_1^{(k)} = (1, \theta_{21}^{(k)}) \perp (-\theta_{21}^{(k)}, 1) = u_2^{(k)}$, parametrize a variety of dimension $4 + 3 \binom{2}{2} = 7$.
\end{Example}

We have seen how some components defined by maximal basis-aligned tensors are nondefective, i.e. the dimension is the expected one. However, these components could be part of a higher-dimensional component. Indeed, there exist two-orthogonal decompositions of maximal length that are not basis-aligned, which implies having more degrees of freedom in the parametrization. The following example shows that the lower bound of \Cref{thm:dimension_lower_bound} is not tight. Calculating the dimension of the two-orthogonal variety remains an open question.

\begin{Example}\label{ex:non-basis-aligned444} Consider the following family of two-orthogonal tensors in $(\R^4)^{\otimes 3}$
    \begin{align*}
    & \lambda_{111}\,e_1 \otimes e_1 \otimes e_1 + \lambda_{122}\,e_1 \otimes e_2 \otimes e_2 + \lambda_{133}\,e_1' \otimes e_3 \otimes e_3 + \lambda_{144}\,e_1' \otimes e_4 \otimes e_4 \\
    + & \lambda_{212}\,e_2 \otimes e_1 \otimes e_2 + \lambda_{221}\,e_2 \otimes e_2 \otimes e_1 + \lambda_{234}\,e_2' \otimes e_3 \otimes e_4 + \lambda_{243}\,e_2' \otimes e_4 \otimes e_3 \\
    + & \lambda_{313}\,e_3 \otimes e_1' \otimes e_3' + \lambda_{324}\,e_3 \otimes e_2' \otimes e_4' + \lambda_{331}\,e_3' \otimes e_3' \otimes e_1' + \lambda_{342}\,e_3' \otimes e_4' \otimes e_2' \\
    + & \lambda_{414}\,e_4 \otimes e_1' \otimes e_4' +  \lambda_{423}\,e_4 \otimes e_2' \otimes e_3' + \lambda_{432}\,e_4' \otimes e_3' \otimes e_2' + \lambda_{441}\,e_4' \otimes e_4' \otimes e_1' 
\end{align*}
where $ e_1', e_2' \in \mathrm{span}\{e_1, e_2\}$, $e_3', e_4' \in \mathrm{span}\{e_3, e_4\}$, $\langle e_1', e_2'\rangle = \langle e_3', e_4'\rangle = 0$, and $\|e_i'\|=1$ for all~$i$. We have $\binom{4}{2} = 6$ degrees of freedom to choose $\{e_1, \dots, e_4\}$, one degree of freedom to choose $\{e_1', e_2'\}$, and one degree of freedom to choose $\{e_3', e_4'\}$. This adds up to eight degrees of freedom for each factor, so a total of 24. We can arbitrarily scale each summand, which adds 16 degrees of freedom. Using the computer algebra software \texttt{Macaulay2} \cite{M2}, we computed the Jacobian of the corresponding parametrization and checked that it is generically full-rank. Hence, the dimension of the variety parametrized by these decompositions is 40, while the lower bound given by Theorem $\ref{thm:dimension_lower_bound}$ is 34.
\end{Example}

\section{Parametric description of the two-orthogonal variety}\label{sec:combinatorial_descriptions}

We give a combinatorial parametrization of the two-orthogonal variety using a graph-theoretical perspective. For binary tensors, the resulting graphs are bipartite; we study this case in more depth.

\subsection{Graphical descriptions.} A \emph{graph} is a pair $\sG = (V(\sG), E(\sG))$ where $V(\sG)$ is a set of vertices and $E(\sG)$ is a set of unordered pair of vertices, called edges. We describe two-orthogonal decompositions via graphs, as follows.

\begin{Definition}
    Let $V = \R^{n_1} \otimes \cdots \otimes \R^{n_d}$. The \emph{graphical description} of a two-orthogonal decomposition $\T = \sum_{i=1}^r x_i^{(1)} \otimes \cdots \otimes x_i^{(d)} \in V$ is a tuple of $d$ graphs $(\sG_1, \dots, \sG_d)$ such that~$V(\sG_k) = [r]$ and $\{ i,j\} \in E(\sG_k)$ if and only if $x_i^{(k)} \perp x_j^{(k)}$.
\end{Definition}

\begin{Example}
    The graphical description of an odeco decomposition of rank $r$ is $(K_r, \dots, K_r)$, where~$K_r$ is the complete graph on $r$ vertices.
\end{Example}

\begin{Example} \label{ex:graphs}
Consider the tensor $e_1 \otimes e_1 \otimes e_1 + e_2 \otimes (e_1 + e_2) \otimes e_2 + (e_1 - e_2) \otimes e_3 \otimes e_3$. Of the three vectors $x_i^{(1)}$ appearing in the first factor, the first and second vectors are orthogonal. In the second factor, the first and third vectors are orthogonal and the second and third vectors are orthogonal. In the third factor, all three vectors are orthogonal. Hence the graphical description of the decomposition is the following.
    \begin{figure}[ht]
    \vspace{-0.1cm}
    \captionsetup[subfigure]{labelformat=empty}
    \centering
    \begin{subfigure}{0.2\textwidth}
        \centering
        \begin{tikzpicture}
            \node[draw,circle,minimum size=0.5cm,inner sep=0] (n1) at (0,0) {1};
            \node[draw,circle,minimum size=0.5cm,inner sep=0] (n2) at (1,0) {2};
            \node[draw,circle,minimum size=0.5cm,inner sep=0] (n3) at (0.5,0.86) {3};
            \draw (n1) -- (n2); 
        \end{tikzpicture}
        \caption{$\sG_1$}
    \end{subfigure}
    \hspace{0.1cm}
    \begin{subfigure}{0.2\textwidth}
        \centering
        \begin{tikzpicture}
            \node[draw,circle,minimum size=0.5cm,inner sep=0] (n1) at (0,0) {1};
            \node[draw,circle,minimum size=0.5cm,inner sep=0] (n2) at (1,0) {2};
            \node[draw,circle,minimum size=0.5cm,inner sep=0] (n3) at (0.5,0.86) {3};
            \draw (n1) -- (n3);
            \draw (n2) -- (n3);
        \end{tikzpicture}
        \caption{$\sG_2$}
    \end{subfigure}
    \hspace{0.1cm}
    \begin{subfigure}{0.2\textwidth}
        \centering
        \begin{tikzpicture}
            \node[draw,circle,minimum size=0.5cm,inner sep=0] (n1) at (0,0) {1};
            \node[draw,circle,minimum size=0.5cm,inner sep=0] (n2) at (1,0) {2};
            \node[draw,circle,minimum size=0.5cm,inner sep=0] (n3) at (0.5,0.86) {3};
            \draw (n1) -- (n2);
            \draw (n1) -- (n3);
            \draw (n2) -- (n3);
        \end{tikzpicture}
        \caption{$\sG_3$}
    \end{subfigure}
    \vspace{-0.4cm}
\end{figure}  
\end{Example}

A \emph{multigraph} is $\sG = (V(\sG), E(\sG))$ where $V(\sG)$ is the set of vertices and $E(\sG)$ is a \emph{multiset} of unordered pairs of vertices. Given $\mu, n \in \NN$, the complete multigraph $\mu K_n$ is the complete graph $K_n$ in which every edge appears $\mu$ times. We define the union of two graphs $\sG_1, \sG_2$ as the multigraph $\sG = \sG_1 \cup \sG_2$, where $V(\sG) = V(\sG_1) \cup V(\sG_2)$ and $E(\sG) = E(\sG_1) \sqcup E(\sG_2)$.
Given a pair of (multi)graphs $\sG_1, \sG_2$, we say that $\sG_1 \subseteq \sG_2$ if $V(\sG_1) \subseteq V(\sG_2)$ and $E(\sG_1) \subseteq E(G_2)$. For example, the multigraph obtained as the union of the three graphs from \Cref{ex:graphs} is
\begin{figure}[ht]
    \vspace{-0.1cm}
    \captionsetup{labelformat=empty}

        \centering
        \begin{tikzpicture}
            \node[draw,circle,minimum size=0.5cm,inner sep=0] (n1) at (0,0) {1};
            \node[draw,circle,minimum size=0.5cm,inner sep=0] (n2) at (1,0) {2};
            \node[draw,circle,minimum size=0.5cm,inner sep=0] (n3) at (0.5,0.86) {3};
            \draw (n1) to[bend left=15] (n2); 
            \draw (n1) to[bend right=15] (n2);
        
            \draw (n1) to[bend left=15] (n3); 
            \draw (n1) to[bend right=15] (n3);
        
            \draw (n3) to[bend left=15] (n2); 
            \draw (n3) to[bend right=15] (n2);
        \end{tikzpicture}
        \vspace{-0.3cm}
        \caption{$\sG = \sG_1 \cup \sG_2 \cup \sG_3$}
    \vspace{-0.4cm}
\end{figure}  

\begin{Definition}[\cite{haynes2010orthogonal}]
    Given a (multi)graph $\sG$ and a positive integer $n$, an \emph{orthogonal vector $n$-coloring} of $\sG$ is an assignment of vectors of $\R^n \setminus \{ 0\}$ to $V(\sG)$ such that if $\{v,w\} \in E(\sG)$ then~$v$ and $w$ receive orthogonal vectors. The \emph{vector chromatic number} $\chi_v(\sG)$ is the least $n$ such that $\sG$ has an orthogonal vector $n$-coloring.
\end{Definition}
\begin{Proposition} \label{prop:graphical-description}
    Let $(\sG_1, \dots, \sG_d)$ be the graphical description of a two-orthogonal decomposition $\T = \sum_{i=1}^r x_i^{(1)}\otimes \cdots \otimes x_i^{(d)} \in \R^{n_1} \otimes \cdots \otimes \R^{n_d}$. Then $\chi_v(\sG_j) \leq n_j$ for all $j \in [d]$, and~$2K_r \subseteq  \bigcup_{j} \sG_j$.
\end{Proposition}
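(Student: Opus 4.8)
The plan is to unwind the definitions; both assertions follow directly, with essentially no computation. The two inputs are the definition of the graphical description and the definition of two-orthogonality (\Cref{def:wo2}).

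For the bound $\chi_v(\sG_j)\le n_j$, I would observe that the assignment $i\mapsto x^{(i)}_j$ sends each vertex $i\in[r]=V(\sG_j)$ to a vector of $\R^{n_j}$, and these vectors are nonzero since $x^{(i)}=x^{(i)}_1\otimes\cdots\otimes x^{(i)}_d\neq 0$ forces every factor $x^{(i)}_j$ to be nonzero. By the definition of the graphical description, $\{i,k\}\in E(\sG_j)$ implies $x^{(i)}_j\perp x^{(k)}_j$, so this assignment is a valid orthogonal vector $n_j$-coloring of $\sG_j$; hence $\chi_v(\sG_j)\le n_j$. This part treats each of the $d$ graphs separately and uses only the nonvanishing of the summands.

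For $2K_r\subseteq\bigcup_j\sG_j$, I would recall that $\bigcup_j\sG_j$ is the multigraph on $[r]$ whose edge multiset is $\bigsqcup_j E(\sG_j)$, so that containing $2K_r$ means precisely that for every pair $i\neq k$ the edge $\{i,k\}$ occurs in at least two of the (simple) graphs $\sG_j$. By \Cref{def:wo2}, for each $i\neq k$ there exist indices $j_1\neq j_2$ with $x^{(i)}_{j_1}\perp x^{(k)}_{j_1}$ and $x^{(i)}_{j_2}\perp x^{(k)}_{j_2}$; by the definition of the graphical description this says exactly $\{i,k\}\in E(\sG_{j_1})$ and $\{i,k\}\in E(\sG_{j_2})$. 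Hence $\{i,k\}$ appears with multiplicity at least $2$ in $\bigsqcup_j E(\sG_j)$, and ranging over all pairs gives $2K_r\subseteq\bigcup_j\sG_j$. This half couples the graphs and uses the other half of the two-orthogonality condition, namely the existence of two orthogonal directions for each pair of summands.

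There is no genuine obstacle here; the only point that requires a moment of care is the bookkeeping of multigraph multiplicities — making sure that ``orthogonality in at least two factors'' is recorded as multiplicity at least two in the \emph{disjoint union} $\bigsqcup_j E(\sG_j)$, coming from two distinct simple graphs $\sG_{j_1},\sG_{j_2}$, rather than as a doubled edge inside a single $\sG_j$ (which cannot occur, since each $\sG_j$ is a simple graph).
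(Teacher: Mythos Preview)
Your proof is correct and follows exactly the same approach as the paper: exhibit $i\mapsto x^{(i)}_j$ as an orthogonal vector $n_j$-coloring of $\sG_j$, and read off $2K_r\subseteq\bigcup_j\sG_j$ directly from the two-orthogonality condition. You simply spell out more of the bookkeeping than the paper does.
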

\begin{proof}
    For each $j \in [d]$, assigning $x_i^{(j)}$ to vertex $i \in [r]$ gives an orthogonal vector $n_j$-coloring of $\sG_j$. The fact that $2K_r \subseteq  \bigcup_{j} \sG_j$ follows from two-orthogonality.
\end{proof}

In view of \Cref{prop:graphical-description}, we define the notion of a valid graphical description for the two-orthogonal variety $W \subset \R^{n_1} \otimes \cdots \otimes \R^{n_d}$ to mean a tuple of $d$ graphs that encode the orthogonality relations of some two-orthogonal tensor in $\R^{n_1} \otimes \cdots \otimes \R^{n_d}$.

\begin{Definition}
     A \emph{valid graphical description} for $W \subset \R^{n_1} \otimes \cdots \otimes \R^{n_d}$ is a tuple of graphs $(\sG_1, \dots, \sG_d)$ such that $V(\sG_1) = \cdots = V(\sG_d) =[r]$, $\chi_v(\sG_j) \leq n_j$ for all $j$, and $2K_r \subseteq  \bigcup_{j} \sG_j$.
\end{Definition}

\begin{Remark}
    \Cref{lemma:stabilization} can be proved using the combinatorial tools in this section. Given two graphs $\sG, \sG'$ on the same vertex set, we have $\chi_v(\sG \cup \sG') \leq \chi_v(\sG) \chi_v(\sG')$ \cite[Lemma 1.14]{haynes2010orthogonal}. Fix $n_1 \leq \cdots \leq n_d$ and let $(\sG_1, \dots, \sG_d)$  with $V(\sG_j) = [r]$ for all $j$ be a valid graphical description for $W \subset \R^{n_1} \otimes \cdots \otimes \R^{n_d}$. Since $2K_r \subseteq \bigcup_{j=1}^{d} \sG_j$ we have $K_r \subseteq \bigcup_{j=1}^{d-1} \sG_j$, so
    \[
    r = \chi_v(K_r) = \chi_v\left(\bigcup_{j=1}^{d-1} \sG_j\right) \leq \chi_v(\sG_1) \cdots \chi_v(\sG_{d-1}) \leq n_1 \cdots n_{d-1}.
    \]
\end{Remark}

Given a valid graphical description for $W \subset \R^{n_1} \otimes \cdots \otimes \R^{n_d}$, we are interested in obtaining the two-orthogonal decompositions that have that graphical description. 

\begin{Definition}
    The \emph{orthogonal vector $n$-coloring ideal} of a graph $\sG$ is
    \[
    I_{\sG, n} = \left( \langle u_i, u_j \rangle \mid \{i,j\} \in E(\sG) \right) + \left( \langle u_i, u_i \rangle - 1 \mid i \in V(\sG) \right) 
    \subset \R[u_{i,1}, \dots, u_{i,n} \mid i \in V(\sG)].
    \]
\end{Definition}
The zero locus $Z(I_{\sG, n}) \subseteq (\R^{n})^{\times |V(\sG)|}$ consists of all orthogonal vector $n$-colorings of~$\sG$ with unit vectors.
If $\chi_v(\sG) \leq n$, then~$Z(I_{\sG, n}) \neq \varnothing$ by definition. This implies the following.

\begin{Proposition}
    The two-orthogonal variety $W \subset \R^{n_1} \otimes \cdots \otimes \R^{n_d}$ is equal to the union of~$\overline{\mathrm{Im}(\psi_{(\sG_1, \dots, \sG_d)})}$ over all valid graphical descriptions $(\sG_1, \dots, \sG_d)$ for $W$, where
    \[
    \begin{array}{r@{\hspace{0cm}}c@{\hspace{0cm}}c@{\hspace{0.1cm}}c@{\hspace{0.1cm}}c@{\hspace{0cm}}c@{\hspace{0cm}}c@{\hspace{0cm}}ccc}
    \psi_{(\sG_1, \dots, \sG_d)} :& Z(I_{\sG_1, n_1}) &\times &\cdots &\times& Z(I_{\sG_d, n_d}) & \times & \R^r& \to & \R^{n_1} \otimes \cdots \otimes \R^{n_d} \\
          &\big((x_1^{(1)}, \dots, x_r^{(1)})&, & \dots & ,&(x_1^{(d)}, \dots, x_r^{(d)}) &, &(\lambda_1, \dots, \lambda_r) \big)& \mapsto & \sum\limits_{i=1}^r \lambda_i x_i^{(1)} \otimes \cdots \otimes x_i^{(d)}.
    \end{array}
    \]
\end{Proposition}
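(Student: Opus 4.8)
The plan is to prove the two set-theoretic inclusions between $W$ and the union $\bigcup_{(\sG_1,\dots,\sG_d)}\overline{\mathrm{Im}(\psi_{(\sG_1,\dots,\sG_d)})}$ over all valid graphical descriptions, and then handle the interaction between the union and the Zariski closure. The two facts I would use are \Cref{prop:graphical-description}, which says that the graphical description of any two-orthogonal decomposition is valid, together with its converse: the condition $2K_r\subseteq\bigcup_j\sG_j$ built into validity forces every tensor in the image of $\psi_{(\sG_1,\dots,\sG_d)}$ to be two-orthogonal. I also recall from \Cref{def:W_r} that $W=\overline{\bigcup_r W_r}$ and each $W_r$ is the Zariski closure of the set of two-orthogonal tensors of length at most $r$, so $W$ is exactly the Zariski closure of the set of all two-orthogonal tensors.

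For the inclusion $\bigcup_{(\sG_1,\dots,\sG_d)}\overline{\mathrm{Im}(\psi_{(\sG_1,\dots,\sG_d)})}\subseteq W$, I would fix a valid $(\sG_1,\dots,\sG_d)$ with common vertex set $[r]$ and an arbitrary point $\sum_{i=1}^r\lambda_i x^{(i)}_1\otimes\cdots\otimes x^{(i)}_d$ of $\mathrm{Im}(\psi_{(\sG_1,\dots,\sG_d)})$, where each $(x^{(1)}_k,\dots,x^{(r)}_k)\in Z(I_{\sG_k,n_k})$, so $x^{(i)}_k\perp x^{(j)}_k$ whenever $\{i,j\}\in E(\sG_k)$. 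Since $2K_r\subseteq\bigcup_j\sG_j$, every pair $i\ne j$ is an edge of at least two of the $\sG_k$, hence the nonzero summands are pairwise orthogonal in at least two factors and the tensor lies in $W_r\subseteq W$. As $W$ is closed, this gives $\overline{\mathrm{Im}(\psi_{(\sG_1,\dots,\sG_d)})}\subseteq W$, and hence so is the union.

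For the reverse inclusion, I would take an arbitrary two-orthogonal tensor $\T=\sum_{i=1}^r x^{(i)}$, normalize each factor so that $\langle x^{(i)}_k,x^{(i)}_k\rangle_k=1$ (absorbing the rescalings into coefficients $\lambda_i$), and let $(\sG_1,\dots,\sG_d)$ be the graphical description of the resulting decomposition. By \Cref{prop:graphical-description} this description is valid, the normalized mode-$k$ factors form a point of $Z(I_{\sG_k,n_k})$, and $\T=\psi_{(\sG_1,\dots,\sG_d)}\big((x^{(i)}_k)_{i,k},(\lambda_i)_i\big)$. Hence the set of all two-orthogonal tensors is contained in $\bigcup_{(\sG_1,\dots,\sG_d)}\mathrm{Im}(\psi_{(\sG_1,\dots,\sG_d)})$, and taking Zariski closures yields $W\subseteq\overline{\bigcup_{(\sG_1,\dots,\sG_d)}\mathrm{Im}(\psi_{(\sG_1,\dots,\sG_d)})}$.

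The one genuinely delicate point — and the step I expect to require the most care — is upgrading $\overline{\bigcup_{(\sG_1,\dots,\sG_d)}\mathrm{Im}(\psi_{(\sG_1,\dots,\sG_d)})}$ to $\bigcup_{(\sG_1,\dots,\sG_d)}\overline{\mathrm{Im}(\psi_{(\sG_1,\dots,\sG_d)})}$, since closure only commutes with \emph{finite} unions. To do this I would observe that validity requires $\chi_v(\sG_j)\le n_j$ and $2K_r\subseteq\bigcup_j\sG_j$; as in the Remark following \Cref{prop:graphical-description} (or directly via \Cref{lemma:stabilization}) this bounds $r\le\min_k\prod_{j\ne k}n_j$, so there are only finitely many valid graphical descriptions for $W$, the union over them is a finite union of Zariski-closed sets, and the closure distributes. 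Combining this with the two inclusions finishes the proof. Two smaller points I would also record: $\psi_{(\sG_1,\dots,\sG_d)}$ has nonempty domain precisely because validity gives $Z(I_{\sG_j,n_j})\ne\varnothing$; and normalizing factors to unit norm is harmless, being immediate for the positive-definite inner products used throughout.
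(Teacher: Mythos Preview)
The paper does not give a proof of this proposition; it is stated as an immediate consequence of the definitions of valid graphical description, of $I_{\sG,n}$, and of $W$. Your argument is correct and is precisely the natural way to fill in the details: the two inclusions follow directly from \Cref{prop:graphical-description} and the validity condition $2K_r\subseteq\bigcup_j\sG_j$, and your observation that the union is finite (via the bound on $r$ from the Remark following the definition of valid graphical description) is exactly what is needed to pass the closure through the union.
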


\begin{Remark} \label{remark:not-prime}
We cannot guarantee that $\overline{\mathrm{Im}(\psi_{(\sG_1, \dots, \sG_d)})}$ is irreducible, since $I_{\sG, n}$ is not prime in general. For example, for $\sG = ([4], \{\{1,2\}, \{1,3\}, \{1,4\}, \{2,3\},\{2,4\}\})$, $I_{\sG, 3}$ is generated by
\[
\begin{array}{ccc}
u_{1, 1}u_{2, 1}+u_{1, 2}u_{2, 2}+u_{1, 3}u_{2, 3}, & u_{1, 1}u_{3, 1}+u_{1, 2}u_{3, 2}+u_{1, 3}u_{3, 3}, & u_{1, 1}u_{4, 1}+u_{1, 2}u_{4, 2}+u_{1, 3}u_{4, 3},\\ u_{2, 1}u_{3, 1}+u_{2, 2}u_{3, 2}+u_{2, 3}u_{3, 3}, & u_{2, 1}u_{4, 1}+u_{2, 2}u_{4, 2}+u_{2, 3}u_{4, 3}, & u_{1, 1}^{2}+u_{1, 2}^{2}+u_{1, 3}^{2}-1,\\ u_{2, 1}^{2}+u_{2, 2}^{2}+u_{2, 3}^{2}-1, & u_{3, 1}^{2}+u_{3, 2}^{2}+u_{3, 3}^{2}-1, & u_{4, 1}^{2}+u_{4, 2}^{2}+u_{4, 3}^{2}-1,
\end{array}
\]
and its primary decomposition consists of four components.
\end{Remark}

\begin{Remark} \label{remark:inclusion_components}
    Graphical descriptions give inclusions between components of $W$. We define a partial order on the valid graphical descriptions for $W$ as follows: 
    $(\sG_1, \dots, \sG_d) \preceq (\sG_1', \dots, \sG_d')$ if $V(\sG_j) = V(\sG_j')$ and $E(\sG_j') \subseteq E(\sG_j)$ for all $j \in [d]$. If $(\sG_1, \dots, \sG_d) \preceq (\sG_1', \dots, \sG_d')$, then $\overline{\mathrm{Im}(\psi_{(\sG_1, \dots, \sG_d)})} \subseteq \overline{\mathrm{Im}(\psi_{(\sG_1', \dots, \sG_d')})}$.
    The graphical description depends on the order of the summands. To avoid this dependence, the partial order can be defined up to graph isomorphism. This leads to a notion of maximal valid graphical descriptions. Counting and characterizing the maximal valid graphical descriptions remains an open question.
\end{Remark}

\subsection{Binary tensors} \label{sec:binary}

In this section, we focus on two-orthogonal tensors in $V = (\R^2)^{\otimes d}$. Let $(\sG_1, \dots, \sG_d)$ be the graphical description of a two-orthogonal decomposition in $V$. Then, each $\sG_j$ is a disjoint union of complete bipartite graphs, as follows. If $u, v, w \in \R^2$, $u \perp v$, and $u \perp w$, then $v$ and $w$ are collinear. Therefore, without loss of generality, in this section we will only consider valid graphical descriptions such that each graph is a disjoint union of complete bipartite graphs.

\Cref{remark:not-prime} shows that $I_{\sG,n}$ is not prime in general, so a valid graphical description containing $\sG$ may not parametrize an irreducible variety. The following result shows that these varieties are irreducible for binary tensors.

\begin{Proposition}\label{prop:expected-dim}
    Fix a tuple of graphs $(\sG_1, \dots, \sG_d)$ on vertex set $[r]$. Assume that it is a valid graphical description for $W \subset (\R^2)^{\otimes d}$, so each $\sG_j$ is a disjoint union of $c_j$ complete bipartite graphs. Then $\overline{\mathrm{Im}(\psi_{(\sG_1, \dots, \sG_d)})}$ is irreducible and its expected dimension is $ r + \sum_{j=1}^d c_j$.
\end{Proposition}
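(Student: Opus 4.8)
The plan is to reparametrize the map $\psi := \psi_{(\sG_1, \dots, \sG_d)}$ so that its source becomes an affine space, and then read off both irreducibility and the parameter count from that reparametrization.

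First I would describe $Z(I_{\sG_j,2})$. Because $\sG_j$ is a disjoint union of $c_j$ complete bipartite graphs and, in $\R^2$, the orthogonal complement of a nonzero vector is a line, any orthogonal vector $2$-coloring of a component with bipartition $(A,B)$ must send all of $A$ into one line $\ell$ and all of $B$ into $\ell^\perp$; the unit vectors on each part are then determined up to sign (an isolated vertex is the component $K_{1,0}$ and contributes the free choice of a single line in the same way). Hence the continuous data of a coloring of $\sG_j$ amount to a choice of $c_j$ points of $\PP(\R^2)\cong\PP^1$, so $\dim Z(I_{\sG_j,2}) = c_j$. Together with the $r$ parameters $\lambda_1,\dots,\lambda_r$, the source $Z(I_{\sG_1,2})\times\cdots\times Z(I_{\sG_d,2})\times\R^r$ of $\psi$ has dimension $r + \sum_{j=1}^d c_j$, which is the asserted expected dimension of $\overline{\mathrm{Im}(\psi)}$.

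For irreducibility I would pass to an affine chart. For each factor $j$ and each complete bipartite component of $\sG_j$, with bipartition $(A_{j,c},B_{j,c})$, introduce a single parameter $t_{j,c}\in\R$, and define the polynomial map
\[
\tilde\psi\colon \R^{r}\times\R^{\sum_j c_j}\longrightarrow(\R^2)^{\otimes d},\qquad \big((\lambda_i),(t_{j,c})\big)\longmapsto \sum_{i=1}^r\lambda_i\,x^{(i)}_1\otimes\cdots\otimes x^{(i)}_d,
\]
where $x^{(i)}_j=(1,t_{j,c})$ if $i\in A_{j,c}$ and $x^{(i)}_j=(-t_{j,c},1)$ if $i\in B_{j,c}$. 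Every line of $\R^2$ other than the two coordinate axes equals $\mathrm{span}(1,t)$ for a unique $t\in\R$, and rescaling each $x^{(i)}_k$ to a unit vector and adjusting $\lambda_i$ accordingly shows $\mathrm{Im}(\tilde\psi)\subseteq\mathrm{Im}(\psi)$; conversely, the colorings appearing in $\psi$ but not coming from $\tilde\psi$ are exactly those using a coordinate axis in some factor, since the sign choices within the parts are absorbed summand-by-summand into the $\lambda_i$, and such colorings form a proper closed subset of the source of $\psi$. Hence $\overline{\mathrm{Im}(\tilde\psi)}=\overline{\mathrm{Im}(\psi)}$, and since the source of $\tilde\psi$ is an irreducible affine variety and the Zariski closure of the image of an irreducible variety under a morphism is irreducible, $\overline{\mathrm{Im}(\psi)}$ is irreducible.

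The main obstacle is the irreducibility step, precisely because $Z(I_{\sG_j,2})$ is typically \emph{not} irreducible over $\R$: already a single edge gives $\{(u,v)\in S^1\times S^1:u\perp v\}$, a disjoint union of two circles. The point that makes the argument work is that the extra components of $Z(I_{\sG_j,2})$ correspond only to discrete sign choices, which do not alter the image because $x^{(i)}_k\mapsto-x^{(i)}_k$ is undone by $\lambda_i\mapsto-\lambda_i$; equivalently, one may work in the single affine chart $\tilde\psi$ without shrinking $\overline{\mathrm{Im}(\psi)}$. The remaining care is routine: checking that the coordinate-axis colorings are a proper closed (hence lower-dimensional) subset, after which the dimension claim is the parameter count of the first paragraph, and, if one wants the actual dimension rather than the expected one, one can additionally verify that the generic fibre of $\tilde\psi$ is finite, consistent with \Cref{ex:jacobian}.
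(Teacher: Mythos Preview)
Your proof is correct and follows essentially the same approach as the paper: both arguments observe that the reducibility of $Z(I_{\sG_j,2})$ comes only from discrete sign choices, absorb those signs into the $\lambda_i$, and then parametrize by an irreducible source to conclude. The only cosmetic difference is that the paper takes the irreducible source to be an $\SO(2)$-orbit of a fixed basis coloring, whereas you use the affine chart $(1,t),(-t,1)$; your closure step (the ``proper closed subset'' sentence) is slightly loose as written since the source of $\psi$ is reducible, but your final paragraph supplies exactly the missing ingredient, so the argument goes through.
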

\begin{proof}
    First, suppose that $\sG$ is a complete bipartite graph with $V(\sG) = [r]$. Let $(e_{i_1}, \dots, e_{i_r})$ with $i_j \in \{1,2\}$ be an orthogonal vector two-coloring of $\sG$. Acting with $\SO(2)$, we get all orthogonal vector two-colorings of $\sG$ with unit vectors, up to sign. This is an irreducible subvariety of $Z (I_{\sG,2})$ of dimension one, let us call it $Y_\sG$. Now suppose that $\sG$ is a disjoint union of complete bipartite graphs, i.e. $\sG = \sH_1 \sqcup \cdots \sqcup \sH_c$. Then, $Y_\sG = Y_{\sH_1} \times \cdots \times Y_{\sH_c}$ is a product of irreducible varieties, hence irreducible, and its dimension is $c$.
    
    Using the previous reasoning for each $\sG_j$, we get that $Y_{\sG_1} \times \cdots \times Y_{\sG_d} \times \R^r$ is a product or irreducible varieties, hence irreducible, and its dimension is $r + \sum_{j=1}^d c_j$. In the tensor decomposition given by $\psi_{(\sG_1, \dots, \sG_d)}$, the signs of the vectors can be absorbed by $\lambda \in \R^r$, so 
    \[
    \psi_{(\sG_1, \dots, \sG_d)} (Z(I_{\sG_1}) \times \cdots \times Z(I_{\sG_d}) \times \R^r) = \psi_{(\sG_1, \dots, \sG_d)}(Y_{\sG_1} \times \cdots \times Y_{\sG_d} \times \R^r).
    \]
    Finally, since $\psi_{(\sG_1, \cdots, \sG_d)}$ is a polynomial map, the variety $\overline{\mathrm{Im}(\psi_{(\sG_1, \dots, \sG_d)})}$ is also irreducible, and its expected dimension is $\dim(Y_{\sG_1} \times \cdots \times Y_{\sG_d} \times \R^r) = r + \sum_{j=1}^d c_j$.
\end{proof}

\Cref{thm:dimension_lower_bound} says that $\dim(W) \geq 2^{d-1} + d$. We conjecture that for binary tensors we have equality, and the top-dimensional component is $\overline{W_{2^{d-1}} \setminus W_{2^{d-1} - 1}}$.

\begin{Conjecture} \label{conj:maximal_dimension_binary}
    Fix a tuple of graphs $(\sG_1, \dots, \sG_d)$ on vertex set $[r]$. Assume that it is a valid graphical description for $W \subset (\R^2)^{\otimes d}$, so each $\sG_j$ is a disjoint union of $c_j$ complete bipartite graphs. Then 
    \[
        r + \sum_{j=1}^d c_j \leq 2^{d-1} + d
    \]
    with equality only if $r = 2^{d-1}$.
\end{Conjecture}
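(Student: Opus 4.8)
The statement is combinatorial: as explained at the start of \Cref{sec:binary}, a valid graphical description for $W\subset(\R^2)^{\otimes d}$ is a tuple $(\sG_1,\dots,\sG_d)$ of graphs on a common vertex set $[r]$ with each $\sG_j$ a disjoint union of $c_j$ complete bipartite graphs (an isolated vertex counting as one of them) and with $2K_r\subseteq\bigcup_j\sG_j$, i.e.\ every pair of vertices is adjacent in at least two of the $\sG_j$. Writing $\bar c_j=c_j-1\ge 0$, the bound $r+\sum_j c_j\le 2^{d-1}+d$ is equivalent to
\[
r+\sum_{j=1}^{d}\bar c_j\ \le\ 2^{d-1},
\]
and ``equality only if $r=2^{d-1}$'' becomes ``equality forces $\bar c_j=0$ for all $j$''. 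When all $\bar c_j=0$ each $\sG_j=K_{A_j,B_j}$, and the double-cover condition says precisely that the labels $[r]\to\{0,1\}^d$ recording which side of $\sG_j$ each vertex lies on form a code of minimum Hamming distance at least $2$; such a code has size at most $2^{d-1}$ (recovering the $r\le 2^{d-1}$ of \Cref{thm:maximal_length}), with equality only for the even-weight code. So the real content is to bound the ``excess'' $\sum_j\bar c_j$ against the deficit $2^{d-1}-r$.

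I would prove the displayed inequality by induction on $d$. For $d=2$ the double cover must place every edge of $K_r$ in both $\sG_1$ and $\sG_2$, so $\sG_1=\sG_2=K_r$; a disjoint union of complete bipartite graphs is triangle-free, hence $r\le 2$ and $\bar c_1=\bar c_2=0$. For the inductive step, single out $\sG_d$, and first suppose $\sG_d$ is connected, i.e.\ $\sG_d=K_{A,B}$ with $A,B\neq\varnothing$, $A\sqcup B=[r]$, so $\bar c_d=0$. Restricting the remaining $d-1$ graphs to $A$, and separately to $B$, gives valid descriptions for $(\R^2)^{\otimes(d-1)}$, so by induction $|A|+\sum_{j<d}\bigl(c_j(\sG_j|_A)-1\bigr)\le 2^{d-2}$ and likewise for $B$. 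The combinatorial lemma needed is that for every $j<d$,
\[
c_j(\sG_j)\le c_j(\sG_j|_A)+c_j(\sG_j|_B),
\]
with the right-hand side at least $c_j(\sG_j)+1$ whenever $\sG_j$ contains an edge between $A$ and $B$; this is proved componentwise, since a complete bipartite component meeting both $A$ and $B$ restricts to a nonempty graph on each side. Summing over $j$ and writing $T=\{\,j<d:\sG_j\text{ has no }A\text{--}B\text{ edge}\,\}$, the two inductive estimates combine to $r+\sum_{j<d}\bar c_j\le 2^{d-1}+|T|$.

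The main obstacle is the error term $|T|$. Each $j\in T$ lies ``parallel'' to the bipartition $(A,B)$: there $\sG_j=\sG_j|_A\sqcup\sG_j|_B$ exactly, and the crude induction overcounts by $1$. To kill the $|T|$ term one must exploit that every $A$--$B$ pair is covered once by $\sG_d$ and therefore again by some $\sG_j$ with $j\notin T$; hence the layers outside $T$ simultaneously biclique-cover $K_{A,B}$ and descend to valid descriptions on $A$ and on $B$, which is a stringent requirement precisely when the inductive bounds on $A$ and $B$ are near equality. My expectation is that the way through is to strengthen the inductive hypothesis to record rigidity of the near-extremal cases --- if the bound on $A$ is within $|T|$ of $2^{d-2}$ then the $A$-restriction is forced close to the balanced even-weight configuration, similarly on $B$, and a configuration near-rigid on both sides that also admits $|T|$ parallel layers should be impossible. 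I do not expect a purely global substitute to succeed: bounding the rank or signature of $\sum_j A(\sG_j)$ only yields a \emph{lower} bound on $\sum_j c_j$, which is the wrong direction.

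Two points then finish the plan. First, the disconnected case $\bar c_d\ge 1$: one cannot merge the components of $\sG_d$, because that keeps the description valid while \emph{lowering} $\sum_j\bar c_j$; instead I would run a secondary induction on $r$, restricting to the union of a proper nonempty subfamily of the components of $\sG_d$ and checking that component counts drop by at most the number of deleted vertices, so the estimate still closes. Second, the equality clause: once $r+\sum_j\bar c_j\le 2^{d-1}$ is established with the rigidity needed to remove $|T|$, equality must hold in each step of the induction, forcing $T=\varnothing$ and $c_j(\sG_j)=c_j(\sG_j|_A)=c_j(\sG_j|_B)=1$ for all $j$; hence $\bar c_j=0$ for all $j$ and the code-theoretic argument above gives $r=2^{d-1}$.
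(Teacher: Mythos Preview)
The statement you are trying to prove is labeled \textbf{Conjecture} in the paper, and the paper does \emph{not} contain a proof of it. After stating the conjecture the authors write: ``\Cref{tab:combinatorial_description_222} shows that \Cref{conj:maximal_dimension_binary} is true for $d=3$\ldots Using an analogous approach, one can check that this conjecture is also true for $d=4$. In what follows, we provide further evidence for \Cref{conj:maximal_dimension_binary} and study its consequences.'' The only general results in that direction are \Cref{prop:covering_maximal} (which handles the extremal case $r=2^{d-1}$, forcing each $c_j=1$) and \Cref{prop:weak-upper-bound} (which proves the one-layer inequality $c_d+r\le 2^{d-1}+1$). So there is no ``paper's own proof'' to compare against; the authors regard the full statement as open.

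Your proposal is likewise not a proof but a strategy with an honestly declared gap. The reformulation to $r+\sum_j\bar c_j\le 2^{d-1}$, the base case, the restriction of $\sG_1,\dots,\sG_{d-1}$ to the two sides $A,B$ of a connected $\sG_d$, and the component-count inequality $c_j(\sG_j)\le c_j(\sG_j|_A)+c_j(\sG_j|_B)$ are all correct and combine exactly as you say to give $r+\sum_{j<d}\bar c_j\le 2^{d-1}+|T|$. But eliminating the $|T|$ error term is the entire difficulty, and you do not do it: the sentence ``My expectation is that the way through is to strengthen the inductive hypothesis to record rigidity of the near-extremal cases'' is a hope, not an argument, and no candidate strengthened hypothesis is stated, let alone verified to be inductively stable. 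The disconnected-$\sG_d$ case is similarly only gestured at. In short, your write-up recovers roughly what the paper already establishes (it reproves \Cref{prop:covering_maximal} in the equality analysis and is comparable in strength to \Cref{prop:weak-upper-bound}), but it does not settle the conjecture, and neither does the paper.
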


\Cref{tab:combinatorial_description_222} shows that \Cref{conj:maximal_dimension_binary} is true for $d=3$; we compute the graphical descriptions of all two-orthogonal decompositions for $2\times 2 \times 2$ tensors. Using an analogous approach, one can check that this conjecture is also true for $d=4$. In what follows, we provide further evidence for \Cref{conj:maximal_dimension_binary} and study its consequences.

\begin{table}[ht]
    \centering
    \newcolumntype{C}{>{\centering\arraybackslash} m{2cm} }
    \begin{tabular}{|C|C|C|C|}
        \hline
        & $\sG_1$ & $\sG_2$ & $\sG_3$ \\ \hline
        $W_1$ & 
        \begin{tikzpicture}
            \node[draw=none] (n0) at (0, 0.3) {};
            \node[draw,circle,minimum size=0.5cm,inner sep=0] (n1) at (0,0) {1};
        \end{tikzpicture} & 
        \begin{tikzpicture}
            \node[draw=none] (n0) at (0, 0.3) {};
            \node[draw,circle,minimum size=0.5cm,inner sep=0] (n1) at (0,0) {1};
        \end{tikzpicture} & 
        \begin{tikzpicture}
            \node[draw=none] (n0) at (0, 0.3) {};
            \node[draw,circle,minimum size=0.5cm,inner sep=0] (n1) at (0,0) {1};
        \end{tikzpicture} \\ \hline

        $\mathrm{odeco}$ & 
        \begin{tikzpicture}
            \node[draw=none] (n0) at (0, 0.3) {}; 
            \node[draw,circle,minimum size=0.5cm,inner sep=0] (n1) at (0,0) {1};
            \node[draw,circle,minimum size=0.5cm,inner sep=0] (n2) at (1,0) {2};
            \draw (n1) -- (n2);
        \end{tikzpicture} & 
        \begin{tikzpicture}
            \node[draw=none] (n0) at (0, 0.3) {};
            \node[draw,circle,minimum size=0.5cm,inner sep=0] (n1) at (0,0) {1};
            \node[draw,circle,minimum size=0.5cm,inner sep=0] (n2) at (1,0) {2};
            \draw (n1) -- (n2);
        \end{tikzpicture} & 
        \begin{tikzpicture}
            \node[draw=none] (n0) at (0, 0.3) {};
            \node[draw,circle,minimum size=0.5cm,inner sep=0] (n1) at (0,0) {1};
            \node[draw,circle,minimum size=0.5cm,inner sep=0] (n2) at (1,0) {2};
            \draw (n1) -- (n2);
        \end{tikzpicture} \\ \hline
        
        $W_2^{\{2,3\}}$ & 
        \begin{tikzpicture}
            \node[draw=none] (n0) at (0, 0.3) {}; 
            \node[draw,circle,minimum size=0.5cm,inner sep=0] (n1) at (0,0) {1};
            \node[draw,circle,minimum size=0.5cm,inner sep=0] (n2) at (1,0) {2};
        \end{tikzpicture} & 
        \begin{tikzpicture}
            \node[draw=none] (n0) at (0, 0.3) {};
            \node[draw,circle,minimum size=0.5cm,inner sep=0] (n1) at (0,0) {1};
            \node[draw,circle,minimum size=0.5cm,inner sep=0] (n2) at (1,0) {2};
            \draw (n1) -- (n2);
        \end{tikzpicture} & 
        \begin{tikzpicture}
            \node[draw=none] (n0) at (0, 0.3) {};
            \node[draw,circle,minimum size=0.5cm,inner sep=0] (n1) at (0,0) {1};
            \node[draw,circle,minimum size=0.5cm,inner sep=0] (n2) at (1,0) {2};
            \draw (n1) -- (n2);
        \end{tikzpicture} \\ \hline
        
        $W_2^{\{1,3\}}$ & 
        \begin{tikzpicture}
            \node[draw=none] (n0) at (0, 0.3) {};
            \node[draw,circle,minimum size=0.5cm,inner sep=0] (n1) at (0,0) {1};
            \node[draw,circle,minimum size=0.5cm,inner sep=0] (n2) at (1,0) {2};
            \draw (n1) -- (n2);
        \end{tikzpicture} & 
        \begin{tikzpicture}
            \node[draw=none] (n0) at (0, 0.3) {};
            \node[draw,circle,minimum size=0.5cm,inner sep=0] (n1) at (0,0) {1};
            \node[draw,circle,minimum size=0.5cm,inner sep=0] (n2) at (1,0) {2};
        \end{tikzpicture} & 
        \begin{tikzpicture}
            \node[draw=none] (n0) at (0, 0.3) {};
            \node[draw,circle,minimum size=0.5cm,inner sep=0] (n1) at (0,0) {1};
            \node[draw,circle,minimum size=0.5cm,inner sep=0] (n2) at (1,0) {2};
            \draw (n1) -- (n2);
        \end{tikzpicture} \\ \hline
        
        $W_2^{\{1,2\}}$ & 
        \begin{tikzpicture}
            \node[draw=none] (n0) at (0, 0.3) {};
            \node[draw,circle,minimum size=0.5cm,inner sep=0] (n1) at (0,0) {1};
            \node[draw,circle,minimum size=0.5cm,inner sep=0] (n2) at (1,0) {2};
            \draw (n1) -- (n2);
        \end{tikzpicture} & 
        \begin{tikzpicture}
            \node[draw=none] (n0) at (0, 0.3) {};
            \node[draw,circle,minimum size=0.5cm,inner sep=0] (n1) at (0,0) {1};
            \node[draw,circle,minimum size=0.5cm,inner sep=0] (n2) at (1,0) {2};
            \draw (n1) -- (n2);
        \end{tikzpicture} & 
        \begin{tikzpicture}
            \node[draw=none] (n0) at (0, 0.3) {};
            \node[draw,circle,minimum size=0.5cm,inner sep=0] (n1) at (0,0) {1};
            \node[draw,circle,minimum size=0.5cm,inner sep=0] (n2) at (1,0) {2};
        \end{tikzpicture} \\ \hline
        
        $W_3 \setminus W_2$ & 
        \begin{tikzpicture}
            \node[draw=none] (n0) at (0, 1.16) {};
            \node[draw,circle,minimum size=0.5cm,inner sep=0] (n1) at (0,0) {1};
            \node[draw,circle,minimum size=0.5cm,inner sep=0] (n2) at (1,0) {2};
            \node[draw,circle,minimum size=0.5cm,inner sep=0] (n3) at (0.5,0.86) {3};
            \draw (n1) -- (n3);
            \draw (n1) -- (n2);
        \end{tikzpicture} & 
        \begin{tikzpicture}
            \node[draw=none] (n0) at (0, 1.16) {};
            \node[draw,circle,minimum size=0.5cm,inner sep=0] (n1) at (0,0) {1};
            \node[draw,circle,minimum size=0.5cm,inner sep=0] (n2) at (1,0) {2};
            \node[draw,circle,minimum size=0.5cm,inner sep=0] (n3) at (0.5,0.86) {3};
            \draw (n1) -- (n2);
            \draw (n2) -- (n3);
        \end{tikzpicture} & 
        \begin{tikzpicture}
            \node[draw=none] (n0) at (0, 1.16) {};
            \node[draw,circle,minimum size=0.5cm,inner sep=0] (n1) at (0,0) {1};
            \node[draw,circle,minimum size=0.5cm,inner sep=0] (n2) at (1,0) {2};
            \node[draw,circle,minimum size=0.5cm,inner sep=0] (n3) at (0.5,0.86) {3};
            \draw (n1) -- (n3);
            \draw (n2) -- (n3);
        \end{tikzpicture} \\ \hline
        
        $W_4 \setminus W_3$ & 
        \begin{tikzpicture}
            \node[draw=none] (n0) at (0, 1.3) {};
            \node[draw,circle,minimum size=0.5cm,inner sep=0] (n1) at (0,0) {1};
            \node[draw,circle,minimum size=0.5cm,inner sep=0] (n2) at (1,0) {2};
            \node[draw,circle,minimum size=0.5cm,inner sep=0] (n3) at (1,1) {3};
            \node[draw,circle,minimum size=0.5cm,inner sep=0] (n4) at (0,1) {4};
            \draw (n1) -- (n2);
            \draw (n1) -- (n3);
            \draw (n2) -- (n4);
            \draw (n3) -- (n4);
        \end{tikzpicture} 
        & 
        \begin{tikzpicture}
            \node[draw=none] (n0) at (0, 1.3) {};
            \node[draw,circle,minimum size=0.5cm,inner sep=0] (n1) at (0,0) {1};
            \node[draw,circle,minimum size=0.5cm,inner sep=0] (n2) at (1,0) {2};
            \node[draw,circle,minimum size=0.5cm,inner sep=0] (n3) at (1,1) {3};
            \node[draw,circle,minimum size=0.5cm,inner sep=0] (n4) at (0,1) {4};
            \draw (n1) -- (n2);
            \draw (n1) -- (n4);
            \draw (n3) -- (n2);
            \draw (n3) -- (n4);
        \end{tikzpicture} &
        \begin{tikzpicture}
            \node[draw=none] (n0) at (0, 1.3) {};
            \node[draw,circle,minimum size=0.5cm,inner sep=0] (n1) at (0,0) {1};
            \node[draw,circle,minimum size=0.5cm,inner sep=0] (n2) at (1,0) {2};
            \node[draw,circle,minimum size=0.5cm,inner sep=0] (n3) at (1,1) {3};
            \node[draw,circle,minimum size=0.5cm,inner sep=0] (n4) at (0,1) {4};
            \draw (n1) -- (n3);
            \draw (n1) -- (n4);
            \draw (n2) -- (n3);
            \draw (n2) -- (n4);
        \end{tikzpicture} \\ \hline
    \end{tabular}
    \vspace{1em}
    \caption{Graphical descriptions of the two-orthogonal variety in $(\mathbb{R}^2)^{\otimes 3}$. Vertices are summands in a two-orthogonal decomposition. An edge in $\sG_k$ represents an orthogonality in the $k$-th factor. The table shows that  $W_1 \! \subseteq \!\textrm{odeco} \! \subseteq \! W_2$ and $\overline{W_3\setminus W_2} \subset \overline{W_4\setminus W_3}$. See \Cref{sec:2x2x2} for more details. }
    \label{tab:combinatorial_description_222}
\end{table}

\begin{Proposition} \label{prop:covering_bipartite_2}
    The complete multigraph $2K_r$ can be expressed as the union of $d$ bipartite graphs if and only if $r \leq 2^{d-1}$.
\end{Proposition}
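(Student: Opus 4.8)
The plan is to recode the problem: decomposing $2K_r$ into $d$ bipartite graphs is the same data as a binary code of length $d$, size $r$, and minimum Hamming distance at least $2$, after which the statement reduces to the elementary bound that such a code has size at most $2^{d-1}$ (the single-parity-check bound).

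\textbf{Step 1: the dictionary between bipartite decompositions and binary codes.} A simple graph $\sG_j$ on vertex set $[r]$ is bipartite if and only if there is a $2$-coloring $c_j \colon [r] \to \{0,1\}$ with no monochromatic edge (isolated vertices are colored arbitrarily). Given bipartite graphs $\sG_1, \dots, \sG_d$ on $[r]$, I would bundle the chosen colorings into the map $\Phi \colon [r] \to \{0,1\}^d$, $\Phi(i) = (c_1(i), \dots, c_d(i))$. Since each $\sG_j$ is simple and the union in this paper is a multiset union (multiplicities add), the multiplicity of a pair $\{i,i'\}$ in $\bigcup_{j} \sG_j$ equals $|\{\, j : c_j(i) \neq c_j(i') \,\}| = d_H(\Phi(i), \Phi(i'))$. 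Conversely, any $\Phi \colon [r] \to \{0,1\}^d$ produces complete bipartite graphs $\sG_j$ with parts $\{i : \Phi(i)_j = 0\}$ and $\{i : \Phi(i)_j = 1\}$, and the same multiplicity identity holds.

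\textbf{Step 2: the two implications.} If $2K_r = \bigcup_j \sG_j$ with each $\sG_j$ bipartite (in fact if merely $2K_r \subseteq \bigcup_j \sG_j$), then by Step 1 every pair satisfies $d_H(\Phi(i), \Phi(i')) \geq 2$; in particular $\Phi$ is injective and $C := \Phi([r]) \subseteq \{0,1\}^d$ is a code of size $r$ and minimum distance at least $2$. Dropping the last coordinate is injective on $C$, since two codewords agreeing on the first $d-1$ coordinates would be at Hamming distance at most $1$; hence $r = |C| \leq 2^{d-1}$. For the converse, assuming $r \leq 2^{d-1}$, I would take $C$ to be any $r$-element subset of the $2^{d-1}$ even-weight vectors of $\{0,1\}^d$; two distinct even-weight vectors differ in an even, hence at least $2$, number of coordinates. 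Labeling $V(K_r)$ by $C$ and forming the complete bipartite graphs $\sG_j$ as in Step 1 gives $2K_r \subseteq \bigcup_j \sG_j$; deleting the superfluous copies of any edge whose multiplicity exceeds $2$ keeps each $\sG_j$ bipartite and upgrades this to $2K_r = \bigcup_j \sG_j$.

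I do not expect a real obstacle: this is a short reduction rather than a computation. The only points that need care are bookkeeping --- that multiplicity in the multiset union is literally the count of factors in which two vertices receive different colors, so it equals the Hamming distance, and that the even-weight construction must be run for an arbitrary $r \leq 2^{d-1}$ (pass to a subset, then prune) and not only for the extremal value $r = 2^{d-1}$. Both are immediate once the dictionary of Step 1 is in place.
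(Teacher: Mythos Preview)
Your argument is correct and is essentially what the paper's one-line proof unwinds to: the paper simply points back to Theorem~1.2 for $(\R^2)^{\otimes d}$, whose two lemmas are precisely your ``drop the last coordinate is injective on a distance-$\geq 2$ code'' bound and your even-parity construction (the Latin hypercube $L(i_1,\dots,i_{d-1})=\sum_k i_k \bmod 2$ \emph{is} the single-parity-check code), just stated in tensor rather than coding language. One small slip to fix: in Step~1 you assert that the multiplicity of $\{i,i'\}$ in $\bigcup_j \sG_j$ \emph{equals} $d_H(\Phi(i),\Phi(i'))$, but for an arbitrary bipartite $\sG_j$ you only have $\leq$, since $\{i,i'\}$ can be absent from $E(\sG_j)$ even when $c_j(i)\neq c_j(i')$. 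This does not harm Step~2, which for the forward direction only uses that inequality and for the converse uses complete bipartite graphs where equality does hold.
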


\begin{proof}
    This is equivalent to \Cref{thm:maximal_length} for binary tensors.
\end{proof}

\begin{Proposition} \label{prop:covering_maximal}
    Fix a tuple of graphs $(\sG_1, \dots, \sG_d)$ on vertex set $[2^{d-1}]$. Assume that it is a valid graphical description for $W \subset (\R^2)^{\otimes d}$. Then each $\sG_j$ is a complete bipartite graph $K_{2^{d-2}, 2^{d-2}}$. This graphical description satisfies the bound from \Cref{conj:maximal_dimension_binary} with equality.
\end{Proposition}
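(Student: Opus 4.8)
The second assertion is immediate once the first is established: if $\sG_j = K_{2^{d-2},2^{d-2}}$ for every $j$, then $\sG_j$ is connected, so $c_j = 1$ in the notation of \Cref{conj:maximal_dimension_binary}, and $r + \sum_{j=1}^d c_j = 2^{d-1} + d$. I would prove the first assertion by induction on $d$, the case $d = 2$ being trivial since on two vertices $2K_2 \subseteq \sG_1 \cup \sG_2$ forces $\sG_1 = \sG_2 = K_{1,1}$. Assume the statement for $d-1$. Because every edge of $2K_r$ (with $r = 2^{d-1}$) lies in at least two of the graphs, for each $k$ we have $K_r \subseteq \bigcup_{j \ne k}\sG_j$ and, since the complement $\overline{\sG_j}$ of $\sG_j$ in $K_r$ consists of pairs covered at least twice by the remaining $d-1$ graphs, also $\overline{\sG_j} \subseteq \bigcup_{i \ne j,\, i \ne k}\sG_i$. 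Using submultiplicativity of $\chi_v$ under graph union (\cite[Lemma~1.14]{haynes2010orthogonal}) and $\chi_v(K_r) = 2^{d-1}$, the first inclusions force $\chi_v(\sG_j) = 2$ for all $j$, and the second inclusions give $\chi_v(\overline{\sG_j}) \le 2^{d-2}$.

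The first real step is to show that every connected component of every $\sG_j$ is a balanced complete bipartite graph. Writing $\sG_j$ as a disjoint union of complete bipartite graphs $K_{A_t,B_t}$ on its components $C_t = A_t \sqcup B_t$ (an isolated vertex being $K_{A_t,\varnothing}$), the complement $\overline{\sG_j}$ is exactly the graph join (disjoint union together with all edges between distinct parts) of the graphs $K_{A_t} \sqcup K_{B_t}$. Hence $\chi_v(\overline{\sG_j}) = \sum_t \chi_v(K_{A_t}\sqcup K_{B_t}) = \sum_t \max(|A_t|,|B_t|) \ge \sum_t \lceil |C_t|/2 \rceil \ge r/2 = 2^{d-2}$. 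Combined with the bound $\chi_v(\overline{\sG_j}) \le 2^{d-2}$ above, all these inequalities are equalities, which forces $|A_t| = |B_t|$ for every $t$; in particular $\sG_j$ has no isolated vertex.

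The crux is connectivity; once that is known, the balance just established gives $\sG_j = K_{2^{d-2},2^{d-2}}$. Suppose $\sG_d$ has $p \ge 2$ components, each a balanced $K_{m_t,m_t}$ with $\sum_t m_t = 2^{d-2}$. Picking one of the two sides from each component yields a clique $Q$ of $\overline{\sG_d}$ with $|Q| = \sum_t m_t = 2^{d-2}$. Since every pair inside $Q$ is covered at least twice by $\sG_1,\dots,\sG_{d-1}$, the induced graphs $(\sG_1[Q],\dots,\sG_{d-1}[Q])$ form a valid graphical description for $W \subset (\R^2)^{\otimes(d-1)}$ on the maximal number $2^{d-2}$ of vertices, so by the inductive hypothesis each $\sG_j[Q] = K_{2^{d-3},2^{d-3}}$, which is connected; hence $Q$ lies inside a single component of $\sG_j$ for every $j < d$. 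Letting $Q_0$ be the clique obtained by choosing the first side of each component and $Q_s$ ($s = 1,\dots,p$) the one obtained by switching only the $s$-th choice, we get $Q_0 \cap Q_s \ne \varnothing$ and $\bigcup_{s=0}^p Q_s = [r]$; since each $Q_s$ lies in a single component of $\sG_j$ and they all overlap $Q_0$, all of $[r]$ lies in one component of $\sG_j$. Thus $\sG_1,\dots,\sG_{d-1}$ are connected, hence equal to $K_{2^{d-2},2^{d-2}}$. Labelling each vertex by the tuple of its sides in $\sG_1,\dots,\sG_{d-1}$ then gives an injection $[r] \hookrightarrow \{0,1\}^{d-1}$ (as $\bigcup_{j<d}\sG_j \supseteq K_r$), hence a bijection; every pair of vertices at Hamming distance one is covered exactly once by $\sG_1,\dots,\sG_{d-1}$, so it must be an edge of $\sG_d$, whence $\sG_d$ contains a spanning hypercube graph and is connected — contradicting $p \ge 2$. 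Therefore each $\sG_j$ is connected, and with the balance step, $\sG_j = K_{2^{d-2},2^{d-2}}$.

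The main obstacle is exactly this connectivity step. A direct edge count, $\sum_j |E(\sG_j)| \ge |E(2K_r)|$, forces each $\sG_j = K_{2^{d-2},2^{d-2}}$ only for $d \le 3$, so for larger $d$ one genuinely needs the passage to a maximum clique of the complement $\overline{\sG_d}$ together with the induction, and then the hypercube/bijection argument to rule out a single remaining disconnected factor.
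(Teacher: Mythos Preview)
Your argument is correct, but it is considerably more elaborate than the paper's. The paper proves connectivity of each $\sG_j$ \emph{directly}, with no induction and no contradiction: pick any proper two-coloring of each of $\sG_1,\dots,\sG_{d-1}$ (this does not require them to be connected), label each vertex by the resulting string in $\{0,1\}^{d-1}$, observe that labels are distinct (a repeated label would leave a pair covered at most once), and that Hamming-distance-one pairs must lie in $\sG_d$; hence $\sG_d$ contains a spanning hypercube and is connected. Balance then follows in one line from \Cref{prop:covering_bipartite_2}: an independent side of size larger than $2^{d-2}$ would have to be doubly covered by only $d-1$ bipartite graphs.

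Your proof reaches the same endpoint but via a detour. You first establish balance of every component using the pleasant identity $\chi_v(\overline{\sG_j})=\sum_t\max(|A_t|,|B_t|)$ together with the bound $\chi_v(\overline{\sG_j})\le 2^{d-2}$; this is a nice observation, though it does more work than the paper's one-line appeal to \Cref{prop:covering_bipartite_2}. For connectivity you then argue by induction and contradiction: assuming $\sG_d$ disconnected, you pass to a maximum independent set $Q$, invoke the inductive hypothesis on $(\sG_1[Q],\dots,\sG_{d-1}[Q])$, patch the overlapping $Q_s$'s to force $\sG_1,\dots,\sG_{d-1}$ connected, and \emph{only then} run the labeling/hypercube argument. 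But that final labeling step already works with arbitrary two-colorings of $\sG_1,\dots,\sG_{d-1}$, so the entire inductive scaffolding (and the preliminary connectivity of the other $\sG_j$) can be dropped. In short: your proof is sound, but the hypercube step you place last is in fact the whole proof.
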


\begin{proof}
    First, we show that $\sG_{d}$ is connected, and the same argument works for any $\sG_j$. Since each $\sG_j$ is bipartite, they are two-colorable. We label each vertex with a binary string of length $d-1$ such that if two vertices are connected in $\sG_j$, their labels differ in the $j$-th position.
    All vertices have distinct labels, as follows. Two vertices with the same label are not connected in any of $\sG_1, \dots, \sG_{d-1}$. So even if they were connected in $\sG_{d}$, there is at most one edge between them. Hence each of the $2^{d-1}$ possible labels is used exactly once.
    Moreover, if two labels differ in only one position, then they are only connected in one of $\sG_1, \ldots, \sG_{d-1}$, hence they are connected in $\sG_d$. 

    We construct a path between any two vertices $u$ and $v$ in $\sG_d$, as follows.
     Let $u$ and $v$ be labelled by binary strings $\ell_u$ and $\ell_v$ in $\{ 0, 1 \}^{d-1}$. We can convert $\ell_u$ into $\ell_v$ via a sequence of binary strings, each differing from the previous one in only one position. Since every binary string is a vertex in our graph, this is a sequence of vertices in the graph. All vertices with labels that differ in only one position are connected in $\sG_d$. Hence it is a path in $\sG_d$. 

     We have shown that each $\sG_j$ is connected. Let $A,B$ be two independent sets of $\sG_j$, so that every edge in $\sG_j$ connects a vertex from $A$ with a vertex from $B$. Then $|A| = |B| = 2^{d-2}$, otherwise we would need to connect more than $2^{d-2}$ vertices, with at least two edges for every pair of vertices, using only $d-1$ graphs, a contradiction to \Cref{prop:covering_bipartite_2}.
\end{proof}

\begin{Corollary}\label{cor:maximal_binary}
    If the difference $W_{2^{d-1}} \setminus W_{2^{d-1}-1} \subset (\R^2)^{\otimes d}$ is nonempty, then it consists of basis-aligned tensors (up to closure), so $\dim\left( \overline{W_{2^{d-1}} \setminus W_{2^{d-1}-1}} \right) = 2^{d-1} + d$.
\end{Corollary}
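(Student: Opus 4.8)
The plan is to reduce the statement to the combinatorics of maximal graphical descriptions via \Cref{prop:covering_maximal}, and then read off the dimension from \Cref{prop:expected-dim} and \Cref{thm:dimension_lower_bound}. Write $r=2^{d-1}$. First I would show that a generic tensor in any irreducible component of $\overline{W_{r}\setminus W_{r-1}}$ admits a genuine two-orthogonal decomposition of length \emph{exactly} $r$. The set of tensors with an honest two-orthogonal decomposition of length at most $k$ is a finite union of images of the maps $\psi_{(\sG_1,\dots,\sG_d)}$, hence constructible, hence contains a dense open subset of $W_k$. A component $Z$ of $W_{r}$ not contained in the closed set $W_{r-1}$ therefore has a generic point $\T$ that is a genuine two-orthogonal tensor but does not lie in $W_{r-1}$, so no decomposition of $\T$ has fewer than $r$ summands; write $\T=\sum_{i=1}^{r}x^{(i)}$ with all $x^{(i)}\neq 0$.

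Next, the graphical description $(\sG_1,\dots,\sG_d)$ of this decomposition is a valid graphical description for $W$ on the vertex set $[r]$ (by \Cref{prop:graphical-description}), so \Cref{prop:covering_maximal} forces each $\sG_j$ to be the complete bipartite graph $K_{2^{d-2},2^{d-2}}$. Such a decomposition is automatically basis-aligned: in $\R^2$, if $u\perp v$ and $w\perp v$ with $v\neq 0$ then $u$ and $w$ are collinear, so the vectors $x^{(i)}_j$ indexed by one part of $\sG_j$ are pairwise collinear, likewise for the other part, and vectors from opposite parts are orthogonal, which is exactly \Cref{def:basis-aligned}. Thus $\T\in\mathrm{Im}(\psi_{(\sG_1,\dots,\sG_d)})$ with each $\sG_j=K_{2^{d-2},2^{d-2}}$ connected, so $c_j=1$ for all $j$, and by \Cref{prop:expected-dim} the irreducible variety $\overline{\mathrm{Im}(\psi_{(\sG_1,\dots,\sG_d)})}$ has dimension at most $r+\sum_{j=1}^{d}c_j=2^{d-1}+d$. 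Since this applies to a generic point of every component, $\overline{W_{r}\setminus W_{r-1}}$ is a finite union of such varieties; in particular it consists of basis-aligned tensors up to closure, and has dimension at most $2^{d-1}+d$.

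For the reverse inequality I would invoke \Cref{thm:dimension_lower_bound} with $n=2$: the Latin hypercube of \Cref{lemma:distance3} yields a dimension-preserving parametrization whose image is an irreducible subvariety of $W=W_{2^{d-1}}$ of dimension exactly $2^{d-1}+d$, and this variety is of the form $\overline{\mathrm{Im}(\psi_{(\sG_1,\dots,\sG_d)})}$ for a maximal graphical description (all $\sG_j=K_{2^{d-2},2^{d-2}}$). Its generic member genuinely has two-orthogonal rank $2^{d-1}$ — for generic coefficients $\lambda_{\bi}$ the decomposition cannot be shortened, in contrast with the special tensor of \Cref{ex:non-unique-decomposition} — so this variety is not contained in $W_{2^{d-1}-1}$. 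Hence it is one of the components of $\overline{W_{2^{d-1}}\setminus W_{2^{d-1}-1}}$ picked out above, and the dimension is exactly $2^{d-1}+d$.

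The main obstacle is the structural step: establishing that a generic point of the difference has a decomposition of length precisely $2^{d-1}$ and then applying \Cref{prop:covering_maximal} to pin its graphical description down to $K_{2^{d-2},2^{d-2}}$ in every factor; once that is in place, basis-alignment is immediate and the dimension count is bookkeeping. A secondary delicate point is confirming that the maximal-length family of \Cref{thm:dimension_lower_bound} genuinely fails to lie in $W_{2^{d-1}-1}$, which is what secures the matching lower bound.
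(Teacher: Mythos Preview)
Your approach is the paper's: \Cref{prop:covering_maximal} for basis-alignment, then \Cref{prop:expected-dim} and \Cref{thm:dimension_lower_bound} for the dimension. The paper's proof is two sentences citing exactly these three results, so on the structural level you are aligned.

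There is, however, a genuine gap in your lower-bound step. You assert that a generic member of the family from \Cref{thm:dimension_lower_bound} ``genuinely has two-orthogonal rank $2^{d-1}$'', i.e.\ does not lie in $W_{2^{d-1}-1}$. That is precisely the content of \Cref{thm:exact-stabilization} (Conjecture~5.2) specialized to $(\R^2)^{\otimes d}$, which the paper leaves open. You flag it as a ``secondary delicate point'', but it is not secondary: as written, your lower bound rests on an unproved conjecture, and you are not using the hypothesis of the corollary at all in that half of the argument.

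The repair is already implicit in your first two paragraphs. You showed that every component $Z$ of $\overline{W_{2^{d-1}}\setminus W_{2^{d-1}-1}}$ satisfies $Z\subseteq\overline{\mathrm{Im}(\psi)}$ for some maximal graphical description (each $\sG_j=K_{2^{d-2},2^{d-2}}$). Since $\overline{W_r\setminus W_{r-1}}$ is the union of those irreducible components of $W_r$ not contained in $W_{r-1}$, the set $Z$ is in fact an irreducible component of $W_{2^{d-1}}$; as $\overline{\mathrm{Im}(\psi)}$ is irreducible and sits inside $W_{2^{d-1}}$, maximality forces $Z=\overline{\mathrm{Im}(\psi)}$. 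Now the Jacobian computation in the proof of \Cref{thm:dimension_lower_bound}, specialized to $n=2$, works for \emph{any} maximal index set $\sI$ (take $\sJ$ to be any singleton $\{\bj\}\subset\sI$; its $d$ Hamming-distance-one neighbors lie outside $\sI$ and give $d$ independent columns), so $\dim\overline{\mathrm{Im}(\psi)}=2^{d-1}+d$ on the nose. The hypothesis that the difference is nonempty is exactly what guarantees at least one such $Z$ exists; no statement about generic two-orthogonal rank is needed.
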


\begin{proof}
    The first part of the statement follows from \Cref{prop:covering_maximal}. The second part follows from \Cref{thm:dimension_lower_bound} and \Cref{prop:expected-dim}.
\end{proof}

\begin{Corollary}
    The variety $\overline{W_{2^{d-1}} \setminus W_{2^{d-1}-1}} \subset (\R^2)^{\otimes d}$ is irreducible.
\end{Corollary}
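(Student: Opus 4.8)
The plan is to recognize $\overline{W_{2^{d-1}}\setminus W_{2^{d-1}-1}}$ as (the closure of a dense open subset of) a single irreducible variety of the form $\overline{\mathrm{Im}(\psi_{(\sG_1,\dots,\sG_d)})}$, and then conclude by \Cref{prop:expected-dim}. Recall from \Cref{thm:maximal_length} that $W=W_{2^{d-1}}$ in $(\R^2)^{\otimes d}$, and that $W$ equals the union of the varieties $\overline{\mathrm{Im}(\psi_{(\sG_1,\dots,\sG_d)})}$ over all valid graphical descriptions. I would split this union according to the number of vertices $r$ of the description. If $r<2^{d-1}$, then every point of $\mathrm{Im}(\psi_{(\sG_1,\dots,\sG_d)})$ is a two-orthogonal tensor of length at most $2^{d-1}-1$, so its closure is contained in the closed set $W_{2^{d-1}-1}$; here I use that the chain $W_1\subseteq W_2\subseteq\cdots$ is increasing.

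For $r=2^{d-1}$, \Cref{prop:covering_maximal} forces each $\sG_j$ to be the complete bipartite graph $K_{2^{d-2},2^{d-2}}$, and its proof pins down the bipartitions: labelling the $2^{d-1}$ summands by $\{0,1\}^{d-1}$ (with bit $j$ recording the side of $\sG_j$ for $j<d$), all labels are distinct, the bipartition of $\sG_j$ is cut out by the $j$-th bit for $j<d$, and $\sG_d$ contains the connected hypercube graph $Q_{d-1}$, hence carries the parity bipartition. Thus any two valid graphical descriptions on $2^{d-1}$ vertices differ only by a relabelling of the summands; since $\psi$ is unchanged by permuting the summands of $\sum_i\lambda_i x^{(i)}_1\otimes\cdots\otimes x^{(i)}_d$, all of them have the same image closure, which I call $Z$. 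By \Cref{prop:expected-dim}, $Z$ is irreducible. Combining the two cases gives $W_{2^{d-1}}=Z\cup W_{2^{d-1}-1}$.

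It then follows set-theoretically that $W_{2^{d-1}}\setminus W_{2^{d-1}-1}=Z\setminus W_{2^{d-1}-1}$, an open subset of the irreducible variety $Z$. If $Z\subseteq W_{2^{d-1}-1}$ this set is empty; otherwise $Z\setminus W_{2^{d-1}-1}$ is a nonempty, hence dense, open subset of $Z$, so its closure is $Z$. In either case $\overline{W_{2^{d-1}}\setminus W_{2^{d-1}-1}}$ is irreducible, and equals $Z$ whenever it is nonempty. The one genuinely substantive step is the rigidity at $r=2^{d-1}$ — that there is, up to relabelling summands, a unique valid graphical description — which is exactly what \Cref{prop:covering_maximal} (via the distinctness of the $2^{d-1}$ binary labels, the appearance of the connected hypercube inside $\sG_d$, and the covering bound of \Cref{prop:covering_bipartite_2}) supplies; given that, the irreducibility is a formal consequence of \Cref{prop:expected-dim}.
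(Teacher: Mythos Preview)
Your argument is correct. You stay entirely within the graphical-description framework of Section~4: you split the parametrizing descriptions by their number of vertices, dispose of $r<2^{d-1}$ into $W_{2^{d-1}-1}$, and for $r=2^{d-1}$ you extract from the proof of \Cref{prop:covering_maximal} that the bipartitions of the $\sG_j$ are determined up to a relabelling of summands (bits $1,\dots,d-1$ give a bijection to $\{0,1\}^{d-1}$, and $\sG_d$ contains the connected hypercube $Q_{d-1}$, forcing the parity bipartition). Then \Cref{prop:expected-dim} gives irreducibility of the single image closure $Z$, and the set-theoretic manipulation $W_{2^{d-1}}\setminus W_{2^{d-1}-1}=Z\setminus W_{2^{d-1}-1}$ finishes.

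The paper argues differently: it first invokes \Cref{cor:maximal_binary} to say that maximal-length decompositions are basis-aligned, then recasts such decompositions as coming from Latin $(d-1)$-cubes on two symbols, and shows by induction that all such Latin hypercubes lie in a single isotopy class; since isotopies are realized by permuting summands and acting by $\SO(2)$ on factors, this yields a single irreducible component. Your route avoids the detour through Latin hypercubes and basis-alignedness and applies the irreducibility statement of \Cref{prop:expected-dim} directly, which is cleaner in the binary case. The paper's route, on the other hand, makes explicit the connection to isotopy classes of Latin hypercubes, which is the natural language if one wants to count components for $n>2$.
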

\begin{proof}
    The statement is vacuously true if $W_{2^{d-1}} \setminus W_{2^{d-1}-1} = \varnothing$. 
    Otherwise, \Cref{cor:maximal_binary} implies that $\overline{W_{2^{d-1}} \setminus W_{2^{d-1}-1}}$ consists of basis-aligned tensors. In particular, a two-orthogonal decomposition with $2^{d-1}$ summands comes from a Latin hypercube. Two Latin cubes are isotopic if one can be obtained from the other by permuting its rows, columns, and symbols. Similarly, two Latin hypercubes are called isotopic if one can be obtained from the other by permuting its mode-$k$ fibers (e.g. rows, columns, tubes, etc.) and symbols. Two-orthogonal decompositions coming from two isotopic Latin squares lie in the same irreducible component since permuting mode-$k$ fibers and symbols can be seen as acting with an element of $\SO(n)$. The number of isotopy classes of a Latin $d$-cube on $n=2$ symbols is 1, by induction. The case $d=2$ (i.e. a Latin square) is well known. A 3-cube can be thought of as a 2-cube on symbols $(1,2)$ and $(2,1)$. The same reasoning extends to higher-dimensional cubes.
\end{proof}

The following two results provide more evidence for \Cref{conj:maximal_dimension_binary}.

\begin{Proposition} \label{prop:weak-upper-bound}
    Fix a tuple of graphs $(\sG_1, \dots, \sG_d)$ on vertex set $[r]$. Assume that it is a valid graphical description for $W \subset (\R^2)^{\otimes d}$. Let $\sG_d$ be a disjoint union of $c_d$ complete bipartite graphs. Then $c_d + r \leq 2^{d-1} + 1$, with equality only if $r=2^{d-1}$.
\end{Proposition}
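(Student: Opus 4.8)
The idea is to encode the covering condition as a statement about subsets of the Boolean cube $\{0,1\}^{d-1}$ and then prove a clean isoperimetric-type lemma on the cube. Since the description is valid, each $\sG_j$ is a disjoint union of complete bipartite graphs, in particular bipartite; fix a proper $2$-coloring $\sigma_j\colon[r]\to\{0,1\}$ for each $j\in[d]$, and on the component $\sH_\ell$ of $\sG_d$ write $(A_\ell,B_\ell)$ for the two color classes of $\sigma_d$. Define $t\colon[r]\to\{0,1\}^{d-1}$ by $t(v)=(\sigma_1(v),\dots,\sigma_{d-1}(v))$. For $u\neq v$ the pair $\{u,v\}$ is covered at least twice by $\sG_1,\dots,\sG_d$ but at most once by $\sG_d$, so $\sigma_j(u)\neq\sigma_j(v)$ for some $j\le d-1$; hence $t$ is injective and $S:=t([r])$ has $|S|=r$. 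Moreover $d_H(t(u),t(v))\ge |\{\,j\le d-1:\{u,v\}\in E(\sG_j)\,\}|$, which is $\ge 2$ whenever $\{u,v\}\notin E(\sG_d)$.

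The crucial observation is that if $d_H(t(u),t(v))=1$ then $\{u,v\}$ is forced to be an edge of $\sG_d$ (otherwise the previous bound gives distance $\ge 2$), so $u,v$ lie in a common component $\sH_\ell$ on opposite sides $A_\ell,B_\ell$. Let $F$ be the graph on $S$ whose edges are the Hamming-distance-$1$ pairs. Then every connected component of $F$ lies in the image $t(\sH_\ell)$ of a single component of $\sG_d$ — and these images are pairwise disjoint since $t$ is injective — so, writing $c(\cdot)$ for the number of connected components, we get $c(F)\ge c_d$. It therefore suffices to prove the self-contained statement (with $c_d\le c(F)$ plugged in, and the equality case then forcing $r=2^{d-1}$): \emph{for every $n\ge 1$ and every $S\subseteq\{0,1\}^n$ one has $|S|+c(F_S)\le 2^n+1$, with equality if and only if $S=\{0,1\}^n$}, where $F_S$ is the Hamming-distance-$1$ graph on $S$.

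I would prove this lemma by induction on $n$, the case $n=1$ being immediate. For $n\ge 2$, split $\{0,1\}^n$ into the layers $x_n=0$ and $x_n=1$, obtaining $S^{(0)},S^{(1)}\subseteq\{0,1\}^{n-1}$ with $|S|=|S^{(0)}|+|S^{(1)}|$; then $F_S$ is the disjoint union of $F_{S^{(0)}}$ and $F_{S^{(1)}}$ together with the vertical matching on $S^{(0)}\cap S^{(1)}$. If $S^{(0)}\cap S^{(1)}=\varnothing$, then $|S|\le 2^{n-1}$ and $|S|+c(F_S)\le 2|S|\le 2^n<2^n+1$. If $S^{(0)}\cap S^{(1)}\neq\varnothing$, the inductive hypothesis gives $|S|+c(F_{S^{(0)}})+c(F_{S^{(1)}})\le 2^n+2$, while adding the vertical matching to $F_{S^{(0)}}\sqcup F_{S^{(1)}}$ never increases the component count and the first vertical edge joins a layer-$0$ component to a layer-$1$ component, so $c(F_S)\le c(F_{S^{(0)}})+c(F_{S^{(1)}})-1$. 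Hence
\[
|S|+c(F_S)\le |S|+c(F_{S^{(0)}})+c(F_{S^{(1)}})-1\le 2^n+1,
\]
and equality forces equality in both instances of the inductive hypothesis, so $S^{(0)}=S^{(1)}=\{0,1\}^{n-1}$ and $S=\{0,1\}^n$ (conversely $F_{\{0,1\}^n}$ is connected, giving equality).

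Assembling: $c_d+r\le c(F)+|S|\le 2^{d-1}+1$ by the lemma with $n=d-1$, and if $c_d+r=2^{d-1}+1$ then $c(F)+|S|=2^{d-1}+1$ as well, whence $S=\{0,1\}^{d-1}$ and $r=2^{d-1}$. I expect the lemma — specifically the bookkeeping of connected components under the layer splitting, and pinning down which case yields the slack term — to be the only delicate point; the reduction in the first two paragraphs is routine. A minor point to state carefully is the convention that the complete bipartite pieces of $\sG_d$ (including isolated vertices) are its connected components, so that $c_d$ equals the number of components of $\sG_d$, consistent with \Cref{tab:combinatorial_description_222}; the argument in fact only uses $c_d\le c(F)$, so it is insensitive to this choice.
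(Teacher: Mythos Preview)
Your proof is correct. Both your argument and the paper's begin with the same reduction: label vertices by binary strings in $\{0,1\}^{d-1}$ via $2$-colorings of $\sG_1,\dots,\sG_{d-1}$, observe the labeling is injective, and note that Hamming-distance-$1$ pairs are forced to be $\sG_d$-edges. The difference lies in how the resulting combinatorial bound on the cube is established. The paper uses a Hamiltonian cycle (Gray code) on $\{0,1\}^{d-1}$: assuming $r<2^{d-1}$, the predecessor in the Gray-code order of the minimal label in each $\sG_d$-component is shown to be absent from $S$, giving $c_d$ distinct missing labels and hence $c_d+r\le 2^{d-1}$; the case $r=2^{d-1}$ is handled separately by invoking \Cref{prop:covering_maximal}. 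You instead abstract out a standalone lemma ($|S|+c(F_S)\le 2^n+1$ with equality iff $S=\{0,1\}^n$) and prove it by induction via layer-splitting, then deduce the proposition through the intermediate inequality $c_d\le c(F)$. Your route is a bit longer but has the merit of treating the equality case uniformly and not relying on the (admittedly well-known) existence of a Hamiltonian cycle on the hypercube; the paper's Gray-code trick is shorter and arguably slicker.
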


\begin{proof}
If $r = 2^{d-1}$ then $c_d=1$, by \Cref{prop:covering_maximal}. Suppose that $r < 2^{d-1}$. Each vertex in~$\sG_d$ can be labeled with a binary string of length $d-1$, given by two-colorings of $\sG_1, \dots, \sG_{d-1}$. All labels are distinct and labels at Hamming distance one are connected in $\sG_d$. We show that for every connected component in $\sG_d$ there is a binary string that cannot be a label for a vertex in $\sG_d$.
Consider a Hamiltonian cycle on the hypercube $\{0,1\}^{d-1}$, where vertices of Hamming distance one are connected. This induces a linear ordering $\ell_{1} \prec \cdots \prec \ell_{2^{d-1}}$. Since $r <  2^{d-1}$ we can assume that $\ell_1$ does not label any vertex in $\sG_d$. Given a connected component $\sH_i$ of $\sG_{d}$, let $\ell_{m_i}$ be the minimal label appearing in $\sH_i$. Then $\ell_{m_i - 1}$ cannot label any vertex in $\sG_{d}$. Of course, for different connected components, the corresponding minimal labels are different. Therefore, $c_d + r \leq 2^{d-1} < 2^{d-1} + 1$.
\end{proof}

\begin{Corollary}
    Fix a tuple of graphs $(\sG_1, \dots, \sG_d)$ on vertex set $[r]$. Assume that it is a valid graphical description for $W \subset (\R^2)^{\otimes d}$. Suppose that $\sG_1, \cdots, \sG_{d-1}$ are connected. If $\sG_d$ is not connected, then 
    \[\dim{\overline{\mathrm{Im}(\psi_{(\sG_1, \dots, \sG_d)})}} < 2^{d-1} + d.
    \]
\end{Corollary}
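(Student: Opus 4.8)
The plan is to assemble three results already proved in this section: the dimension count of \Cref{prop:expected-dim}, the combinatorial bound of \Cref{prop:weak-upper-bound}, and the structural rigidity of \Cref{prop:covering_maximal}. As throughout \Cref{sec:binary}, we may assume each $\sG_j$ is a disjoint union of complete bipartite graphs, say $c_j$ of them, so that ``connected'' means $c_j = 1$.

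First I would rule out the extremal case $r = 2^{d-1}$. By \Cref{prop:covering_maximal}, if the vertex set has size $2^{d-1}$, then every $\sG_j$ must be the connected graph $K_{2^{d-2},2^{d-2}}$. This contradicts the hypothesis that $\sG_d$ is disconnected, so $r < 2^{d-1}$. Consequently, the ``equality only if $r = 2^{d-1}$'' clause of \Cref{prop:weak-upper-bound} strengthens the bound there to $c_d + r \le 2^{d-1}$.

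Next I would feed this into the dimension estimate. Since $\sG_1, \dots, \sG_{d-1}$ are connected, $c_1 = \cdots = c_{d-1} = 1$, so $\sum_{j=1}^d c_j = (d-1) + c_d$. \Cref{prop:expected-dim} identifies $\overline{\mathrm{Im}(\psi_{(\sG_1, \dots, \sG_d)})}$ with the closure of the image of the polynomial map $\psi_{(\sG_1, \dots, \sG_d)}$ on the domain $Y_{\sG_1} \times \cdots \times Y_{\sG_d} \times \R^r$, which has dimension $r + \sum_{j=1}^d c_j$. Since the dimension of the image of a morphism cannot exceed that of its source,
\[
\dim \overline{\mathrm{Im}(\psi_{(\sG_1, \dots, \sG_d)})} \;\le\; r + \sum_{j=1}^d c_j \;=\; (d-1) + (c_d + r) \;\le\; (d-1) + 2^{d-1} \;<\; 2^{d-1} + d.
\]

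I do not expect a genuine obstacle: this corollary is bookkeeping that combines the three cited results. The only subtlety is the passage from ``expected dimension'' in \Cref{prop:expected-dim} to an honest upper bound on $\dim \overline{\mathrm{Im}(\psi_{(\sG_1, \dots, \sG_d)})}$ — one must note that \Cref{prop:expected-dim} really exhibits the image (up to closure) as the image of a parametrization whose source has dimension $r + \sum_j c_j$, so that the inequality $\dim(\text{image}) \le \dim(\text{source})$ applies directly.
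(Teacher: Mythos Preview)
Your proof is correct and follows the same approach as the paper, which simply cites \Cref{prop:expected-dim} and \Cref{prop:weak-upper-bound}. Your separate invocation of \Cref{prop:covering_maximal} to rule out $r=2^{d-1}$ is harmless but redundant: the disconnectedness of $\sG_d$ gives $c_d \ge 2$, which together with $c_d + r \le 2^{d-1}+1$ already forces $r < 2^{d-1}$ and hence the strict inequality $c_d + r \le 2^{d-1}$.
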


\begin{proof}
    It follows from \Cref{prop:expected-dim} and \Cref{prop:weak-upper-bound}.
\end{proof}

The previous results combine to give the following.

\begin{Corollary}
    If \Cref{conj:maximal_dimension_binary} is true, then $W_{2^{d-1} - 1} \neq W_{2^{d-1}}$ and $\overline{W_{2^{d-1}} \setminus W_{2^{d-1} - 1}}$ is an irreducible variety. Moreover, $\dim\overline{W_{2^{d-1}} \setminus W_{2^{d-1} - 1}}= 2^{d-1} + d$ and $\dim W_{2^{d-1} - 1} < 2^{d-1} + d$. 
\end{Corollary}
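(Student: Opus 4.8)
The plan is to read off both dimensions from \Cref{conj:maximal_dimension_binary} and the structural results already established for $W\subset(\R^2)^{\otimes d}$, and then invoke the binary-tensor corollaries. The key preliminary observation is that, for every $r$, the variety $W_r$ is a \emph{finite} union of the irreducible subvarieties $\overline{\mathrm{Im}(\psi_{(\sG_1,\dots,\sG_d)})}$, taken over all valid graphical descriptions $(\sG_1,\dots,\sG_d)$ whose common vertex set $[s]$ satisfies $s\leq r$ and whose graphs are disjoint unions of complete bipartite graphs (which for binary tensors is no loss, as recorded at the start of \Cref{sec:binary}). This follows exactly as in the proposition expressing $W$ itself as such a union: allowing some $\lambda_i=0$ accounts for decompositions of length $<s$, conversely each image lands in $W_s\subseteq W_r$, and there are only finitely many graphs on at most $r$ vertices. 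By \Cref{prop:expected-dim} each such piece is irreducible of dimension at most $s+\sum_{j=1}^d c_j$, where $\sG_j$ has $c_j$ components; and \Cref{conj:maximal_dimension_binary} bounds $s+\sum_j c_j$ by $2^{d-1}+d$, the bound being strict whenever $s<2^{d-1}$.

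Next I would pin down the two dimensions. Applying the bound above over all the finitely many pieces of $W_{2^{d-1}}$ gives $\dim W_{2^{d-1}}\leq 2^{d-1}+d$; on the other hand \Cref{thm:dimension_lower_bound} with $n=2$, together with $W=W_{2^{d-1}}$ from \Cref{thm:maximal_length}, gives $\dim W_{2^{d-1}}=\dim W\geq 2^{d-1}+d$. Hence $\dim W_{2^{d-1}}=2^{d-1}+d$. For $W_{2^{d-1}-1}$ every contributing description has $s\leq 2^{d-1}-1<2^{d-1}$, so the strict case of \Cref{conj:maximal_dimension_binary} yields $s+\sum_j c_j\leq 2^{d-1}+d-1$ for each piece, and therefore $\dim W_{2^{d-1}-1}\leq 2^{d-1}+d-1<2^{d-1}+d$. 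Comparing the two dimensions forces $W_{2^{d-1}-1}\subsetneq W_{2^{d-1}}$, so the locally closed set $W_{2^{d-1}}\setminus W_{2^{d-1}-1}$ is nonempty.

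Nonemptiness is precisely the hypothesis needed to activate the remaining results. \Cref{cor:maximal_binary} now applies and shows that $W_{2^{d-1}}\setminus W_{2^{d-1}-1}$ consists, up to closure, of basis-aligned tensors and that $\dim\overline{W_{2^{d-1}}\setminus W_{2^{d-1}-1}}=2^{d-1}+d$; the corollary stated immediately after it then gives that $\overline{W_{2^{d-1}}\setminus W_{2^{d-1}-1}}$ is irreducible, its isotopy-of-Latin-hypercubes argument no longer being vacuous. Together with the previous paragraph this establishes all four assertions: $W_{2^{d-1}-1}\neq W_{2^{d-1}}$, irreducibility, $\dim\overline{W_{2^{d-1}}\setminus W_{2^{d-1}-1}}=2^{d-1}+d$, and $\dim W_{2^{d-1}-1}<2^{d-1}+d$.

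I do not expect a genuine obstacle, since the real content sits in \Cref{conj:maximal_dimension_binary}; the one place deserving care is the first step, namely verifying that $W_r$ is \emph{exactly} the finite union of the $\overline{\mathrm{Im}(\psi_{(\sG_1,\dots,\sG_d)})}$ over descriptions on at most $r$ vertices, so that its dimension is literally the maximum of the (conjecturally bounded) dimensions of these pieces and no component of larger dimension is hidden by taking Zariski closures. Once this bookkeeping is in place, the dimension count and the appeals to \Cref{cor:maximal_binary} and its successor corollary are routine.
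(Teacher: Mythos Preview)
Your proposal is correct and matches the paper's intended argument: the paper offers no explicit proof here beyond ``The previous results combine to give the following,'' and your write-up is precisely the expected unpacking of that sentence---bounding each $\overline{\mathrm{Im}(\psi_{(\sG_1,\dots,\sG_d)})}$ via \Cref{prop:expected-dim} and \Cref{conj:maximal_dimension_binary}, using \Cref{thm:dimension_lower_bound} and \Cref{thm:maximal_length} for the matching lower bound on $\dim W_{2^{d-1}}$, deducing nonemptiness of the difference from the strict dimension inequality, and then invoking \Cref{cor:maximal_binary} and the irreducibility corollary. The bookkeeping point you flag (that $W_r$ is exactly the finite union of these images, so closure commutes with the union) is routine and is handled just as you sketch.
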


\section{$2 \times 2 \times 2$ tensors}\label{sec:2x2x2}

\subsection{Stratification and equations of the two-orthogonal variety.} In this section we study the two-orthogonal variety $W \subseteq (\R^{2})^{\otimes 3}$. We describe the varieties in the chain
\[
W_1 \subseteq W_2 \subseteq W_3 \subseteq W_4 = W.
\]
In \Cref{tab:combinatorial_description_222} we saw a combinatorial description of the two-orthogonal variety in $(\R^2)^{\otimes 3}$. Here we describe this variety algebraically. Consider the Euclidean inner product in $\R^2$ and let~$\{e_0, e_1\}$ be an orthonormal basis of $\R^2$. Using these coordinates, we express a tensor as~$\T = \sum_{i,j,k\in\{0,1\}} t_{ijk} e_i \otimes e_j \otimes e_k$. The equations shown below were obtained with \texttt{Macaulay2}.

Recall that each of the $W_r$'s is invariant under the action of $G = \SO(2)^{\times 3}$ on $(\R^{2})^{\otimes 3}$. This gives a concise way to describe our varieties: we only give a representative of each orbit. Let $u \in \R^2$ be an arbitrary unit vector, and let~$\lambda_1, \lambda_2, \lambda_3, \lambda_4 \in \R$ be arbitrary scalars. 

First, we have $W_1 = X$, the cone over the Segre variety. The tensors in $W_1$ are of the form
\[
\lambda_1 e_0 \otimes e_0 \otimes e_0 \in W_1,
\]
up to the action of $G$. The equations defining $W_1$ are the $2\times 2$ minors of all the flattenings \cite[Example 2.11]{JH}. 

Tensors in $W_2 \setminus W_1$ consist of two summands that are orthogonal in at least two factors. Let us denote by $W_2^{\{i,j\}}$ the tensors in $W_2$ whose two-orthogonal decompositions are orthogonal in the $i$-th and $j$-th factors. We have three families of tensors:
\begin{align*}
    \lambda_1 e_0 \otimes e_0 \otimes e_0 + \lambda_2 u \otimes e_1 \otimes e_1 & \in W_2^{\{2,3\}},\\
    \lambda_1 e_0 \otimes e_0 \otimes e_0 + \lambda_2 e_1 \otimes u \otimes e_1 & \in W_2^{\{1,3\}},\\
    \lambda_1 e_0 \otimes e_0 \otimes e_0 + \lambda_2 e_1 \otimes e_1 \otimes u & \in W_2^{\{1,2\}}.
\end{align*}
The equations defining each of these irreducible components are 
    \[
    W_2^{\{ 2,3\}} : \quad 
    \begin{cases}
    -t_{000}t_{101} + t_{100}t_{001} - t_{010}t_{111} + t_{110}t_{011} = 0\\
    
    -t_{000}t_{110} + t_{100}t_{010} - t_{001}t_{111} + t_{101}t_{011} = 0,
    \end{cases}
    \] 
and we obtain the equations for the other two by swapping indices. Each component is defined by a subset of the equations defining the odeco tensors, as discussed in \Cref{rk:equations}. The intersection of the three components is the odeco variety.

Given $\T = \lambda_1 e_0 \otimes e_0 \otimes e_0 + \lambda_2 u \otimes e_1 \otimes e_1 \in W_2$, to add a third summand to $\T$ satisfying two-orthogonality we need $u = \pm e_0$. We can then add $\lambda_3 e_1 \otimes e_0 \otimes e_1$ or~$\lambda_3 e_1 \otimes e_1 \otimes e_0$. Both two-orthogonal decompositions have the same orthogonality pattern between summands, up to permutation. If we repeat this analysis for the other two components of $W_2$ we also get equivalent tensors. Hence $\overline{W_3 \setminus W_2}$ is irreducible and consists of tensors of the form
\[
    \lambda_1 e_0 \otimes e_0 \otimes e_0 + \lambda_2 e_0 \otimes e_1 \otimes e_1 + \lambda_3 e_1 \otimes e_0 \otimes e_1.
\]
The variety $\overline{W_3 \setminus W_2}$ is defined by two polynomials. One is Cayley's hyperdeterminant:
\begin{align*}\label{eq:poly_Det}
    \Det(\T) =&
    \quad
    t_{000}^{2}t_{111}^{2} +  t_{001}^{2}t_{110}^{2} + t_{010}^{2}t_{101}^{2} + t_{100}^{2} t_{011}^{2}  \\
    &  -2\,t_{000}t_{001}t_{110}t_{111}
    -2\,t_{000}t_{010}t_{101}t_{111}
    -2\,t_{000}t_{100}t_{011}t_{111}\\
    &  -2\,t_{001}t_{100}t_{011}t_{110} -2\,t_{001}t_{010}t_{101}t_{110}
    -2\,t_{010}t_{100}t_{011}t_{101}\\
    & +4\,t_{000}t_{011}t_{101}t_{110} + 4\,t_{001}t_{010}t_{100}t_{111}.
\end{align*}
The other polynomial is a quartic with 40 monomials. This quartic is invariant under the action of $\SO(2)^{\times 3}$. In particular, it is invariant under flipping indices $0 \leftrightarrow 1$. After such relabeling, there are 5 distinct monomials. Consider the homogenization of the polynomial that defines the elliptope \cite{LAURENT1995439}:
\begin{equation} \label{eq:elliptope}
    g(z_1,z_2,z_3,z_4) = 2\,z_1z_2z_3 + z_1^2z_4 + z_2^2z_4 + z_3^2z_4 - z_4^3.
\end{equation}
Then, the other quartic that defines $\overline{W_3 \setminus W_2}$ is
\begin{equation}\label{eq:polyw4}
    f(\T) = \sum_{(i,j,k) \in \{0,1\}^3} (-1)^{i+j+k} \, 
    t_{i,j,k} \, g(t_{i+1,j,k}, t_{i, j+1, k}, t_{i,j,k+1}, t_{i+1, j+1, k+1}),
\end{equation}
where the sum in the indices is taken modulo $2$. Hence, $\overline{W_3\setminus W_2} = Z(f)\cap Z(\Det)$.

Finally, the tensors in $W_4\setminus W_3$ are of the form
\[
\lambda_1 e_0 \otimes e_0 \otimes e_0 + \lambda_2 e_0 \otimes e_1 \otimes e_1 + \lambda_3 e_1 \otimes e_0 \otimes e_1 +
\lambda_4 e_1 \otimes e_1 \otimes e_0,
\]
and $\overline{W_4 \setminus W_3}$ is the zero locus of $f$, from \ref{eq:polyw4}. 
The above combines to give the following. 

\begin{Proposition} \label{prop:stratification-222}
\Cref{tab:geometric_description_222} describes the stratification of the two-orthogonal variety in $(\R^2)^{\otimes 3}$. We have the inclusions $W_1 \subset \mathrm{odeco} \subset W_2$ and $\overline{W_3\setminus W_2} \subset \overline{W_4\setminus W_3}$. Hence, $W = \overline{W_4 \setminus W_3} \cup W_2$. 
\end{Proposition}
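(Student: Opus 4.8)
The plan is to assemble the proposition from the orbit-by-orbit analysis carried out above: first record that analysis as \Cref{tab:geometric_description_222}, then check the two chains of inclusions, and finally obtain the formula for $W$ by a set-theoretic manipulation. The first assertion is bookkeeping — the table collects the $G$-orbit representatives and defining ideals of $W_1$, of the three components $W_2^{\{i,j\}}$, of $\overline{W_3\setminus W_2}$, and of $\overline{W_4\setminus W_3}$ that were determined in the preceding paragraphs (exhaustiveness of the representatives coming from the case analysis of how a third, and then a fourth, rank-one summand may be appended while preserving two-orthogonality, the ideals from \texttt{Macaulay2}), together with the identity $W=W_4$, which is \Cref{thm:maximal_length} since $N=\min_{1\le k\le 3}\prod_{j\ne k}n_j=4$.

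Next I would verify $W_1\subseteq\mathrm{odeco}\subseteq W_2$. A single nonzero rank-one tensor is vacuously orthogonally decomposable, so $W_1\subseteq\mathrm{odeco}$, and the inclusion is proper since $e_0^{\otimes 3}+e_1^{\otimes 3}\in\mathrm{odeco}$ has rank $2$. Conversely every odeco tensor is two-orthogonal (orthogonality in all factors gives it in at least two), and in $(\R^2)^{\otimes 3}$ an odeco decomposition has at most two terms because in each mode the factor vectors are pairwise orthogonal nonzero vectors of $\R^2$; hence $\mathrm{odeco}\subseteq W_2$. This is again proper: for generic $u$ the tensor $e_0\otimes e_0\otimes e_0+e_1\otimes u\otimes e_1\in W_2^{\{1,3\}}$ is not odeco, since (reading off the mode-$1$ slices) the displayed decomposition is, up to scaling, its only rank-$2$ decomposition, and it is orthogonal in mode $2$ only when $u=\pm e_1$.

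For $\overline{W_3\setminus W_2}\subseteq\overline{W_4\setminus W_3}$ the quickest argument uses the equations derived above: $\overline{W_3\setminus W_2}=Z(f)\cap Z(\Det)\subseteq Z(f)=\overline{W_4\setminus W_3}$. The geometric reason is a degeneration: every point of $W_3\setminus W_2$ is $G$-equivalent to $\T_0=\lambda_1\,e_0\otimes e_0\otimes e_0+\lambda_2\,e_0\otimes e_1\otimes e_1+\lambda_3\,e_1\otimes e_0\otimes e_1$ with $\lambda_1\lambda_2\lambda_3\ne0$ (if some $\lambda_i$ vanished the tensor would already lie in $W_2$), and setting $\T_\varepsilon=\T_0+\varepsilon\,e_1\otimes e_1\otimes e_0$ a direct computation gives $\Det(\T_\varepsilon)=4\lambda_1\lambda_2\lambda_3\varepsilon$, so $\T_\varepsilon\notin Z(\Det)\supseteq\overline{W_3\setminus W_2}$ for $\varepsilon\ne0$; since $\{\varepsilon\in\R:\T_\varepsilon\in W_2\}$ is a proper algebraic subset of the line (it omits $\varepsilon=0$), hence finite, we get $\T_\varepsilon\in W_4\setminus W_3$ for all but finitely many $\varepsilon$, and $\T_0=\lim_{\varepsilon\to0}\T_\varepsilon\in\overline{W_4\setminus W_3}$; $G$-invariance of $\overline{W_4\setminus W_3}$ then upgrades this to $W_3\setminus W_2\subseteq\overline{W_4\setminus W_3}$, and hence to the inclusion of closures. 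Finally, since $W_r=W_{r-1}\cup\overline{W_r\setminus W_{r-1}}$ as sets for each $r$, we conclude $W=W_4=W_2\cup\overline{W_3\setminus W_2}\cup\overline{W_4\setminus W_3}=W_2\cup\overline{W_4\setminus W_3}$, using the inclusion just proved.

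The real content is in the preceding orbit analysis; the one step that requires care is the claim that the listed representatives exhaust each stratum — that appending each new summand forces the rigid positions observed above — which is exactly what that case analysis supplies, so the proposition itself presents no further obstacle.
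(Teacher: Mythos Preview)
Your proposal is correct and follows essentially the same approach as the paper: the proposition is assembled from the preceding orbit-by-orbit analysis and the \texttt{Macaulay2}-derived equations, and the inclusion $\overline{W_3\setminus W_2}\subseteq\overline{W_4\setminus W_3}$ follows immediately from $Z(f)\cap Z(\Det)\subseteq Z(f)$. Your explicit verification of the strictness of $W_1\subset\mathrm{odeco}\subset W_2$ and the degeneration argument via $\Det(\T_\varepsilon)=4\lambda_1\lambda_2\lambda_3\varepsilon$ add useful detail that the paper leaves implicit, but the underlying route is the same.
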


\begin{table}[ht]
\centering
\begin{tabular}{l|cccc}
variety & codimension & degree & \# components & generators per component\\
\hline
$W_1 = X$ & 4 & 6 & 1 & 9 quadrics \\
$\mathrm{odeco}$   & 3           & 8      & 1             & 3 quadrics\\
$W_2 = \overline{W_2 \setminus W_1}$   & 2           & 12     & 3             & 2 quadrics\\
$\overline{W_3 \setminus W_2}$   & 2           & 16     & 1     & 2 quartics \\
$\overline{W_4 \setminus W_3}$   & 1           & 4     & 1   & 1 quartic \\
$W = \overline{W_4 \setminus W_3} \cup W_2$ & 1 & 4 & 4 & 1 quartic / 2 quadrics
\end{tabular}
\caption{Description of the two-orthogonal variety in $(\mathbb{R}^2)^{\otimes 3}$.}
\label{tab:geometric_description_222}
\end{table}

Given $\T \in (\R^2)^{\otimes 3}$, we seek the closest two-orthogonal tensor to $\T$. To do so, we compute the critical points of $\|\T - \sS \|^2$ over $\sS \in W$ and choose the one that minimizes this quantity. The \emph{Euclidean distance degree} (ED degree) of a variety is the number of critical points of the squared Euclidean distance to a general point outside the variety \cite{draisma2016euclidean}. For example, the ED degree of the variety of rank-one tensors $X \subset (\R^2)^{\otimes 3}$ is six \cite{FO14}. Applying the technique in \cite{draisma2016euclidean}, we compute $\mathrm{EDdegree}(\overline{W_4\setminus W_3}) = 12$ and $\mathrm{EDdegree}(W_2^{\{i,j\}}) = 4$ for all $i\neq j \in [3]$. Since the ED degree is additive over the irreducible components, the ED degree of the two-orthgonal variety $W = \overline{W_4\setminus W_3} \cup W_2 \subset (\R^2)^{\otimes 3}$ is $24$.

\subsection{Generic identifiability of two-orthogonal decompositions.}

In \Cref{ex:non-unique-decomposition} we saw that two-orthogonal decompositions are not always unique. We show that they are unique for sufficiently general $2 \times 2 \times 2$ tensors.

\begin{Remark} 
    There exist symmetric two-orthogonal tensors that do not have a symmetric two-orthogonal decomposition.
    We will see that the symmetric tensor 
    \[
    \T = \lambda e_0 \otimes e_0 \otimes e_0 + e_0 \otimes e_1 \otimes e_1 + e_1 \otimes e_0 \otimes e_1 + e_1 \otimes e_1 \otimes e_0 \in W_4 \cap S^3(\R^2) \subset (\R^2)^{\otimes 3}
    \]
    does not admit a two-orthogonal decomposition with fewer terms, for a sufficiently general~$\lambda$. 
    The real tensors with symmetric two-orthogonal decompositions are odeco tensors. Hence $\T$ does not admit a symmetric odeco decomposition.
\end{Remark}
    
\uniqueness*
\begin{proof}
The two-orthogonal variety in $(\R^{2})^{\otimes 3}$ has 4 irreducible components:
\[
W = \overline{W_4 \setminus W_3} \cup W_2^{\{2,3\}} \cup  W_2^{\{1,3\}} \cup  W_2^{\{1,2\}}.
\]
We show that a generic tensor from each component has six singular vector tuples and only those appearing in the two-orthogonal decomposition are orthogonal. This implies the uniqueness of the two-orthogonal decomposition. First, let $\T$ be a generic tensor from $\overline{W_4 \setminus W_3}$:
\[
\T = \lambda_1 e_0 \otimes e_0 \otimes e_0 + \lambda_2 e_0 \otimes e_1 \otimes e_1 + \lambda_3 e_1 \otimes e_0 \otimes e_1 +
\lambda_4 e_1 \otimes e_1 \otimes e_0.
\]
The singular vector tuples are $u \otimes v \otimes w$ such that
\[
\det \hspace{-0.1cm}\begin{pmatrix}
        \lambda_1v_0w_0 + \lambda_2v_1w_1 & u_0 \\
        \lambda_3v_0w_1 + \lambda_4v_1w_0 & u_1 
    \end{pmatrix} = \det \hspace{-0.1cm} \begin{pmatrix}
        \lambda_1u_0w_0 + \lambda_3u_1w_1 & v_0 \\ 
        \lambda_2u_0w_1 + \lambda_4u_1w_0 & v_1
    \end{pmatrix} = \det \hspace{-0.1cm} \begin{pmatrix}
        \lambda_1v_0u_0 + \lambda_4v_1u_1 & w_0 \\ 
        \lambda_2u_0v_1 + \lambda_3u_1v_0 & w_1
    \end{pmatrix} \hspace{-0.05cm} = \hspace{-0.05cm} 0.
\]
This gives linear conditions in the monomials $\{u_iv_jw_k\}$. Suppose that $u_0 = 0$ and $u_1 = 1$. If $v_0 = 0$ then $w_1 = 0$, which gives solution $e_1 \otimes e_1 \otimes e_0$. If $v_0 = 1$ then $w_0 = v_1 = 0$, which gives solution $e_1 \otimes e_0 \otimes e_1$. The case $u_1 = 0$ is analogous and we obtain the other two terms in our decomposition.

Now, fix $u_1 = v_1 = w_1 = 1$ and let $g$ be the homogenization of the elliptope (\ref{eq:elliptope}). We get
\[
\begin{cases}
    \begin{array}{@{\hspace{0cm}}l@{\hspace{0cm}}c@{\hspace{0cm}}l@{\hspace{0cm}}c@{\hspace{0cm}}l}
    u_0 - \frac{2\,\lambda_{1}\lambda_{2}\lambda_{3}+\lambda_{1}^{2}\lambda_{4}+\lambda_{2}^{2}\lambda_{4}+\lambda_{3}^{2}\lambda_{4}-\lambda_{4}^{3}}{\lambda_{1}^{2}\lambda_{2}-\lambda_{2}^{3}+\lambda_{2}\lambda_{3}^{2}+2\,\lambda_{1}\lambda_{3}\lambda_{4}+\lambda_{2}\lambda_{4}^{2}}\, w_0 &=& u_0 - \frac{g(\lambda_1,\lambda_2,\lambda_3, \lambda_4)}{g(\lambda_1,\lambda_3,\lambda_4, \lambda_2)}w_0 &=& 0\\
    v_0 - \frac{2\,\lambda_{1}\lambda_{2}\lambda_{3}+\lambda_{1}^{2}\lambda_{4}+\lambda_{2}^{2}\lambda_{4}+\lambda_{3}^{2}\lambda_{4}-\lambda_{4}^{3}}{\lambda_{1}^{2}\lambda_{3}+\lambda_{2}^{2}\lambda_{3}-\lambda_{3}^{3}+2\,\lambda_{1}\lambda_{2}\lambda_{4}+\lambda_{3}\lambda_{4}^{2}} \, w_0 &=& v_0 - \frac{g(\lambda_1,\lambda_2,\lambda_3, \lambda_4)}{g(\lambda_1,\lambda_2,\lambda_4, \lambda_3)}w_0 &=&0\\
    w_0^2 + \frac{\left(\lambda_{1}^{2}\lambda_{3}+\lambda_{2}^{2}\lambda_{3}-\lambda_{3}^{3}+2\,\lambda_{1}\lambda_{2}\lambda_{4}+\lambda_{3}\lambda_{4}^{2}\right)\left(\lambda_{1}^{2}\lambda_{2}-\lambda_{2}^{3}+\lambda_{2}\lambda_{3}^{2}+2\,\lambda_{1}\lambda_{3}\lambda_{4}+\lambda_{2}\lambda_{4}^{2}\right)}{\left(2\,\lambda_{1}\lambda_{2}\lambda_{3}+\lambda_{1}^{2}\lambda_{4}+\lambda_{2}^{2}\lambda_{4}+\lambda_{3}^{2}\lambda_{4}-\lambda_{4}^{3}\right)\left(\lambda_{1}^{3}-\lambda_{1}\lambda_{2}^{2}-\lambda_{1}\lambda_{3}^{2}-2\,\lambda_{2}\lambda_{3}\lambda_{4}-\lambda_{1}\lambda_{4}^{2}\right)} &=& w_0^2 - \frac{g(\lambda_1,\lambda_2,\lambda_4, \lambda_3)g(\lambda_1,\lambda_3,\lambda_4, \lambda_2)}{g(\lambda_1,\lambda_2,\lambda_3, \lambda_4)g(\lambda_2,\lambda_3,\lambda_4, \lambda_1)} &=& 0.
    \end{array}
\end{cases}
\]
So we obtain two solutions 
\[
\begin{pmatrix}
    u_0 \\ v_0 \\ w_0 
\end{pmatrix}
= \pm \begin{pmatrix}  {\left( \frac{g(\lambda_1,\lambda_2,\lambda_3, \lambda_4)\,g(\lambda_1,\lambda_2,\lambda_4, \lambda_3)}{g(\lambda_1,\lambda_3,\lambda_4,\lambda_2) \, g(\lambda_2,\lambda_3,\lambda_4,\lambda_1)}\right) }^\frac12\\
     {\left( \frac{g(\lambda_1,\lambda_2,\lambda_3,\lambda_4)\,g(\lambda_1,\lambda_3,\lambda_4,\lambda_2)}{g(\lambda_1,\lambda_2,\lambda_4,\lambda_3)\,g(\lambda_2,\lambda_3,\lambda_4,\lambda_1)}\right) }^\frac12 \\
     {\left( \frac{g(\lambda_1,\lambda_2,\lambda_4,\lambda_3)\,g(\lambda_1,\lambda_3,\lambda_4,\lambda_2)}{g(\lambda_1,\lambda_2,\lambda_3,\lambda_4)\,g(\lambda_2,\lambda_3,\lambda_4,\lambda_1)}\right) }^\frac12
    \end{pmatrix}.
\]
This gives six singular vector tuples, the number of singular vector tuples of a generic $2 \times 2 \times 2$ tensor \cite{FO14}.
For generic $\lambda_i$'s we have $u_0^2, v_0^2, w_0^2 \neq 0,1$, which implies that the two new critical two rank-one approximations are not orthogonal to each other or to the summands appearing in the decomposition.

Now let $\T$ be a generic tensor from $\W_2^{\{1,2\}}$. The argument works analogously for the other two components of $W_2$. After an orthogonal change of basis and rescaling,
\[
\T = e_0 \otimes e_0 \otimes e_0 + \lambda e_1 \otimes e_1 \otimes y
\]
with $y = (y_0, 1) \in \R^2$ and $\lambda \in \R$. The singular vector tuples are $u \otimes v \otimes w$ such that
\[
\det \hspace{-0.1cm} \begin{pmatrix}
        v_0w_0 & u_0 \\
        \lambda v_1 \langle y, w \rangle & u_1 
    \end{pmatrix} = \det \hspace{-0.1cm} \begin{pmatrix}
        u_0w_0 & v_0 \\ 
        \lambda u_1\langle y, w \rangle & v_1
    \end{pmatrix} = \det \hspace{-0.1cm} \begin{pmatrix}
        v_0u_0 + \lambda y_0 v_1u_1 & w_0 \\ 
        \lambda u_1v_1 & w_1
    \end{pmatrix} = 0.
\]
Suppose that $u_0 = 1$ and $u_1 = 0$. This, implies $v_1 = w_1 = 0$, which gives the solution $e_0 \otimes e_0 \otimes e_0$. The cases $v_1 = 0$ and $w_1 = 0$ are analogous. Now, fix $u_1 = v_1 = w_1 = 1$. We get five solutions. One of them is the second summand from the decomposition given before. The other four are
\[
\begin{pmatrix}
    u_0 \\
    v_0 \\
    w_0
\end{pmatrix} = \begin{pmatrix}
    \pm \left(\frac{\lambda^{2}y_{0}^{2}+\lambda^{2}-\lambda\,y_{0}}{1-\lambda\,y_{0}} \right)^{\frac{1}{2}}\\
    \pm \left(\frac{\lambda^{2}y_{0}^{2}+\lambda^{2}-\lambda\,y_{0}}{1-\lambda\,y_{0}} \right)^{\frac{1}{2}}\\
    \frac{\lambda}{1-\lambda\,y_{0}}
\end{pmatrix} , \quad 
\begin{pmatrix}
    u_0 \\
    v_0 \\
    w_0
\end{pmatrix} =
\begin{pmatrix}
    \mp \left(\frac{\lambda^{2}y_{0}^{2}+\lambda^{2}+\lambda\,y_{0}}{1+\lambda\,y_{0}} \right)^{\frac{1}{2}} \\
    \pm \left(\frac{\lambda^{2}y_{0}^{2}+\lambda^{2}+\lambda\,y_{0}}{1+\lambda\,y_{0}} \right)^{\frac{1}{2}} \\ 
    -\frac{\lambda}{1+\lambda\,y_{0}}
\end{pmatrix}.
\]
For generic $y_0, \lambda \in \R$, the only singular vector tuples orthogonal to each other are the ones from the decomposition given before. So the two-orthogonal decomposition is unique.
\end{proof}

Using an analogous approach, one can verify that a two-orthogonal decomposition with 8 summands in $(\R^2)^{\otimes 4}$ is unique. It would be interesting to generalize this result to bigger tensors. We present the following conjecture.

\begin{Conjecture}\label{thm:exact-stabilization}
    A generic two-orthogonal tensor in $\mathbb{R}^{n_1} \otimes \cdots \otimes \mathbb{R}^{n_d}$ has a unique two-orthogonal decomposition (up to reordering of the summands). In particular, the chain 
    \[
    W_1 \subseteq W_2 \subseteq \cdots \subseteq W_r \subseteq \cdots
    \]
    stabilizes exactly at $N = \min_{k \in [d]} \prod_{j \neq k} n_j$. That is, $W_{N-1} \neq W_N = W_{N+1} = \cdots$.
\end{Conjecture}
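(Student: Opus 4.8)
The statement has two parts: (i) generic uniqueness of the two‑orthogonal decomposition, and (ii) the strict inclusion $W_{N-1}\subsetneq W_N$. Stabilization for $r\ge N$ is already \Cref{thm:maximal_length}, so I would focus on (i), which is the main content, and then deduce (ii) from (i) together with a dimension count. The overall plan for (i) is to generalize the mechanism behind \Cref{thm:uniqueness}: realize the summands of a two‑orthogonal decomposition as distinguished singular vector tuples and show that, for a generic two‑orthogonal tensor, these are the only singular vector tuples exhibiting any two‑orthogonality at all.

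Concretely, let $\T=\sum_{i=1}^r x^{(i)}$ be a two‑orthogonal decomposition, with $r$ the two‑orthogonal rank, and suppose $\T=\sum_{j=1}^s y^{(j)}$ is any other one. By \Cref{prop:tangent-space}, each $x^{(i)}$ and each $y^{(j)}$ is a singular vector tuple of $\T$ with its norm as singular value, and when $s\ge 2$ each $y^{(j)}$ is two‑orthogonal to at least one further singular vector tuple of $\T$ (another summand $y^{(j')}$). Thus (i) reduces to the claim $(\star)$: for a generic two‑orthogonal tensor $\T$, the only singular vector tuples of $\T$ that are two‑orthogonal to some other singular vector tuple of $\T$ are the summands $x^{(1)},\dots,x^{(r)}$ of its decomposition. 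Granting $(\star)$, any two‑orthogonal decomposition $\{y^{(j)}\}$ consists of rank‑one tensors drawn, as directions with coefficients equal to their singular values, from $\{x^{(1)},\dots,x^{(r)}\}$; since the $x^{(i)}$ are mutually orthogonal hence linearly independent, matching $\sum_j\|y^{(j)}\|\,\widehat y^{(j)}=\T=\sum_i\|x^{(i)}\|\,\widehat x^{(i)}$ forces $s=r$ and $\{y^{(j)}\}=\{x^{(i)}\}$ up to reordering.

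To prove $(\star)$ I would work component by component over the parametrization of \Cref{sec:combinatorial_descriptions}. Fix a valid graphical description $(\sG_1,\dots,\sG_d)$; on the parameter space $P=Z(I_{\sG_1,n_1})\times\cdots\times Z(I_{\sG_d,n_d})\times\R^r$ the summands $x^{(1)},\dots,x^{(r)}$ and the tensor $\T=\psi_{(\sG_1,\dots,\sG_d)}(\mathrm{data})$ are algebraic functions of the data. Consider the incidence variety $\widetilde\Sigma=\{(\mathrm{data},z)\mid z\text{ a singular vector tuple of }\T\}$ with its map to $P$, and the closed subset $\widetilde\Sigma_{\mathrm{bad}}\subseteq\widetilde\Sigma$ where in addition $z$ is two‑orthogonal to some $x^{(i)}$ but collinear with none. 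The map $\widetilde\Sigma\to P$ is generically finite: here one needs that a generic two‑orthogonal tensor has the generic number of singular vector tuples \cite{FO14}, all isolated, which I would get from upper semicontinuity of the number of isolated solutions together with one explicit two‑orthogonal tensor whose singular‑vector system is zero‑dimensional and reduced with the full count (the computations in \Cref{thm:uniqueness} and \Cref{sec:2x2x2}, embedded along a Latin‑hypercube skeleton, are natural test cases). It then suffices to show $\widetilde\Sigma_{\mathrm{bad}}$ is a proper subvariety of $\widetilde\Sigma$ — equivalently, to exhibit one choice of data for which every singular vector tuple not among the $x^{(i)}$ is non‑orthogonal, in each factor, to every $x^{(i)}$ and to every other singular vector tuple — since then $\dim\widetilde\Sigma_{\mathrm{bad}}<\dim\widetilde\Sigma=\dim P$, so the image of $\widetilde\Sigma_{\mathrm{bad}}$ in the component of $W$ is a proper subvariety that generic tensors avoid. \Cref{thm:uniqueness} is exactly this analysis carried out by hand for the four components in $(\R^2)^{\otimes 3}$. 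The hard part is making the dimension step uniform: bounding the locus of singular vector tuples of a two‑orthogonal tensor after imposing one extra vanishing inner product requires a good enough handle on the singular‑vector ideal of such tensors, and this is where I expect the real difficulty to lie.

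Finally, for (ii): assuming (i), every parametrization $\psi_{(\sG_1,\dots,\sG_d)}$ is generically finite — its fibers consist of reorderings and signs of the summands — so $\dim W_r$ equals the maximum, over valid graphical descriptions with at most $r$ summands, of the parameter‑space dimension $r+\sum_j\dim Z(I_{\sG_j,n_j})$ (taken on the relevant irreducible components, as in \Cref{prop:expected-dim}). It remains to show this maximum strictly increases in the final step, i.e. is attained only when $r=N$; for $(\R^2)^{\otimes d}$ this is precisely \Cref{conj:maximal_dimension_binary}, and in general it is a combinatorial statement about dimensions of orthogonal‑vector‑coloring varieties under the constraints $\chi_v(\sG_j)\le n_j$ and $2K_r\subseteq\bigcup_j\sG_j$, which I view as the second main obstacle. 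Granting it, $\dim W_{N-1}<\dim W_N$, so the inclusion $W_{N-1}\subseteq W_N$ is strict, and $W_N=W_{N+1}=\cdots$ by \Cref{thm:maximal_length}; since a generic point of $W$ lies in a component of dimension $\dim W_N>\dim W_{N-1}$, it is not in $W_{N-1}$, hence by (i) its unique two‑orthogonal decomposition has length exactly $N$, completing the plan.
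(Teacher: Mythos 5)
The statement you address is labeled a \emph{Conjecture} in the paper; there is no proof there to compare against, and the authors themselves leave both generic uniqueness and the strictness $W_{N-1}\subsetneq W_N$ open. Your reduction of uniqueness to the claim $(\star)$ --- that for a generic two-orthogonal $\T$, the only singular vector tuples of $\T$ that are two-orthogonal to another singular vector tuple are the summands of its decomposition --- is sound, and it is exactly the mechanism running inside the paper's \Cref{thm:uniqueness}, where for each component of $W\subset(\R^2)^{\otimes 3}$ the six singular vector tuples are computed in closed form and the extraneous ones are checked to carry no orthogonality. Your incidence-variety framing of $(\star)$ is the right abstraction, but the two steps you defer are the entire substance, and neither is supplied by the paper: (a) generic finiteness of $\widetilde\Sigma\to P$ needs, beyond the generic count of \cite{FO14}, an explicit two-orthogonal witness with a reduced, full-degree singular-vector scheme in each format and on each relevant component of $Z(I_{\sG_j,n_j})$, and no such witnesses exist in the paper beyond small formats; and (b) properness of $\widetilde\Sigma_{\mathrm{bad}}$ needs a witness, for each valid graphical description, whose extraneous singular vector tuples exhibit no two-orthogonality with the summands or among themselves, and there is currently no handle on the singular-vector ideal of a two-orthogonal tensor to force this. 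Also note a small internal mismatch: as written, $\widetilde\Sigma_{\mathrm{bad}}$ only records two-orthogonality of $z$ with the summands $x^{(i)}$, while your witness condition --- correctly --- also excludes orthogonality between two extraneous tuples; to match $(\star)$ the incidence variety should live over pairs $(z,z')$ of singular vector tuples.

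For the ``in particular,'' your detour through dimensions imports a dependence on \Cref{conj:maximal_dimension_binary} (and an unstated non-binary generalization of it) that the conjecture's own logic does not require. Granting (i), read as generic uniqueness on each irreducible component of $W$ (the reading consistent with \Cref{thm:uniqueness}), the direct argument is shorter: take a generic tensor $\T$ in the image of the length-$N$ Latin-hypercube parametrization of \Cref{lemma:construction_maximal}; by uniqueness its only two-orthogonal decomposition is the one given, with all $N$ summands nonzero, so its two-orthogonal rank is $N$ and $\T\notin W_{N-1}$; hence $W_{N-1}\subsetneq W_N$, and stabilization above $N$ is \Cref{thm:maximal_length}. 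Your dimension route is a legitimate alternative, but it trades the mild hypothesis ``generic on the length-$N$ component'' for a second open conjecture. In sum, your plan is reasonable and matches how the paper handles the base case, but both obstacles you flag are genuine open gaps, so the proposal does not close the conjecture.
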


\subsection*{Open questions} In this work, we study tensors that can be decomposed via a sequence of critical rank-one approximations. 
We focus on the decompositions that are order-independent, which we show to be characterized by the two-orthogonal property. In addition to the conjecture above, there are several open directions for future investigation. We show in \Cref{prop:stratification-222} that a generic $2 \times 2 \times 2$ tensor does not have a two-orthogonal decomposition. We believe this is also true for larger higher-order tensors, since it is a closed property on the singular vector tuples, but do not know the dimension of the two-orthogonal variety and if it can fill the space of tensors. We propose to study this problem by introducing the notion of valid graphical descriptions, which also leaves open directions of future work. For binary tensors of order $d$, we conjecture that the dimension of the two-orthogonal variety is $2^{d-1} + d$ (\Cref{conj:maximal_dimension_binary}) and connect this to a question about bipartite graphs. Finally, we conjecture that the notions of rank and two-orthogonal rank coincide generically up to the generic rank (\Cref{conj:rank}) and we provide evidence for small tensors.

\section*{Acknowledgments}

We thank Jan Draisma and \`Alex Rodr\'iguez Garc\'ia for helpful discussions. \'A. Ribot was supported by a fellowship from ”la
Caixa” Foundation (ID 100010434), with fellowship code LCF/BQ/EU23/12010097, and by an RCCHU fellowship. E. Horobe\c{t} was supported by the Project “Singularities and Applications” - CF 132/31.07.2023 funded by the European Union - NextGenerationEU - through Romania’s National Recovery and Resilience Plan. A. Seigal was partially supported by the NSF (DMR-2011754). E.T. Turatti was partially supported by the project Pure Mathematics in Norway, funded by Trond Mohn Foundation and Tromsø Research Foundation. 

\bibliographystyle{alpha}
\bibliography{refs}

\end{document}